\newtheorem{remark}{Remark}
\newcommand {\be}{\begin{equation}}
\newcommand {\ee}{\end{equation}}  
\newcommand {\eps} {\varepsilon}
\newcommand {\alp} {\alpha}
\newcommand {\la} {\lambda}
\newcommand {\deri}[2] {\frac {\partial #1}{\partial #2}}
\newcommand {\pp}{\boldsymbol{p}}
\newcommand {\qq}{\boldsymbol{q}}
\newcommand {\rr}{\boldsymbol{r}}
\newcommand {\ff}{\boldsymbol{f}}
\newcommand {\UU}{\boldsymbol{U}}
\title{The ``exterior approach" applied to the inverse obstacle problem for the heat equation}
\author{Laurent Bourgeois\thanks{Laboratoire POEMS, ENSTA ParisTech, 
 828, Boulevard des Mar\'echaux, 91762 Palaiseau Cedex, France} ({\tt laurent.bourgeois@ensta.fr})
        \and J\'er\'emi Dard\'e 
 \thanks{Institut de Math\'ematiques, Universit\'e de Toulouse, F-31062 Toulouse Cedex 9, France}}
\begin{document}
\maketitle

\begin{abstract} 
In this paper we consider the ``exterior approach" to solve the inverse obstacle problem for the heat equation.
This iterated approach is based on a quasi-reversibility method to compute the solution from the Cauchy data while a simple level set method is used to  
characterize the obstacle. We present several mixed formulations of quasi-reversibility that enable us to use some classical conforming finite elements. Among these, an iterated formulation
that takes the noisy Cauchy data into account in a weak way is selected to serve in some numerical experiments and show the feasibility of our strategy of identification.
\end{abstract}
\begin{keywords} 
Inverse obstacle problem, Heat equation, Quasi-reversibility method, Level set method, Mixed formulation, Tensorized finite element 
\end{keywords}

\begin{AMS}
\end{AMS}

\pagestyle{myheadings}
\thispagestyle{plain}
\markboth{The inverse obstacle problem for the heat equation}{}

\section{Introduction}
This paper deals with the inverse obstacle problem for the heat equation, which can be described as follows. We consider a bounded domain $D \subset \mathbb{R}^d$, $d \geq 2$, which contains
an inclusion $O$. The temperature in the complementary domain $\Omega=D \setminus \overline{O}$ satisfies the heat equation while the  
inclusion is characterized by a zero temperature.
The inverse problem consists, from the knowledge of the lateral Cauchy data (that is both the temperature and the heat flux) on a subpart of the boundary $\partial D$ during a certain interval of time $(0,T)$ such that the temperature at time $t=0$ is $0$ in $\Omega$, to identify the inclusion $O$.
Such kind of inverse problem arises in thermal imaging, as briefly described in the introduction of \cite{bryan_caudill}.
The first attempts to solve such kind of problem numerically go back to the late 80's, as illustrated by \cite{banks_kojima_winfree}, in which a least square method based on a shape derivative technique is used and numerical applications in 2D are presented. A shape derivative technique is also used in \cite{chapko_kress_yoon} in a 2D case as well, but the least square method is replaced by a 
Newton type method. Lastly, the shape derivative together with the least square method have recently been used in 3D cases \cite{harbrecht_tausch}.
The main feature of all these contributions is that they rely on the computation of forward problems in the domain $\Omega \times (0,T)$: this computation 
obliges the authors to know one of the two lateral Cauchy data (either the temperature or the heat flux) on the whole boundary $\partial D$ of $D$.
In \cite{ikehata_kawashita}, the authors introduce the so-called ``enclosure method", which enables them to recover an approximation of the convex hull of the inclusion without computing any forward problem. Note however that the lateral Cauchy data has to be known on the whole boundary $\partial D$. 

The present paper concerns the ``exterior approach", which is an alternative method to solve the inverse obstacle problem.
Like in \cite{ikehata_kawashita}, it does not need to compute the solution of the forward problem and in addition, it is applicable even if the lateral Cauchy data are known only on a subpart of $\partial D$, while no data are given on the complementary part. The ``exterior approach" consists in defining a sequence of domains that converges in a certain sense to the inclusion we are looking for. More precisely, the $n$th step consists,
\begin{enumerate}
\item for a given inclusion $O_n$, in approximating the temperature in $\Omega_n \times (0,T)$ ($\Omega_n:=D \setminus \overline{O_n}$) with the help of a quasi-reversibility method,  
\item for a given temperature in $\Omega_n \times (0,T)$, in computing an updated inclusion $O_{n+1}$ with the help of a level set method.
\end{enumerate}
Such ``exterior approach" has already been successfully used to solve inverse obstacle problems for the Laplace equation \cite{bourgeois_darde1,bourgeois_darde2,darde_finlandais} and for the Stokes system \cite{bourgeois_darde3}. It has also been used for the heat equation in the 1D case \cite{becache_bourgeois_darde_franceschini}: the problem in this simple case might be considered as a toy problem since the inclusion reduces to a point in some bounded interval. The objective of the present paper is to extend the ``exterior approach" for the heat equation to any dimension of space, with numerical applications in the 2D case.
Let us shed some light on the two steps of the ``exterior approach". In the step $1$, the quasi-reversibility method is used to 
approximate the solution to the heat equation with lateral Cauchy data and zero initial condition in a fixed domain, which is a linear ill-posed problem.
Quasi-reversibility was first introduced in \cite{lattes_lions} and roughly speaking consists, in its classical form, of a Tikhonov regularization applied to a bounded heat operator $\partial_t. -\Delta.$ from a Hilbert space to another, so that such operator is injective with dense range, but not surjective. Quasi-reversibility has been then extensively studied by M. V. Klibanov 
\cite{klibanov}.
The main feature of this non iterative method is that it can be directly interpreted as a weak formulation and hence is well adapted to a finite element method.
However, such weak formulation corresponds to a fourth-order problem and requires some Hermite type finite elements \cite{bourgeois_darde1}.
This was our main motivation to introduce some mixed formulations of quasi-reversibility in order to replace the fourth-order problem by a system of two second-order problems 
for which Lagrange finite elements are sufficient. Those mixed formulations were introduced
first for the Laplace equation \cite{bourgeois,darde_finlandais},
then for the Stokes system \cite{bourgeois_darde3} and eventually for the heat/wave equations \cite{becache_bourgeois_darde_franceschini}.
Besides, our mixed formulations have some communalities with the regularization methods recently proposed in \cite{burman,cindea_munch}.
In the step $2$ of the ``exterior approach", we use a non standard level set method based on the resolution of a Poisson equation instead of a traditional eikonal equation.
Its main advantage is that the Poisson equation can be solved with the help of simple Lagrange finite elements based on the mesh that is already used for the quasi-reversibility method.
Such level set method was first used 
for the Laplace equation in \cite{bourgeois_darde1,bourgeois_darde2,darde_finlandais}
and for the Stokes system in \cite{bourgeois_darde3}.

The article is organized as follows. In section 2 we introduce our inverse obstacle problem as well as several uniqueness results.
Section 3 is dedicated to different mixed formulations of quasi-reversibility to solve the heat equation with lateral Cauchy data and initial condition. 
In section 4 we present our algorithm to solve the inverse obstacle problem.
Numerical experiments are eventually shown in section 5.
\section{The statement of the inverse problem and some uniqueness results} 
\subsection{Statement of the problem}
\label{statement}
Let $D$ and $O$ be two open bounded domains of $\mathbb{R}^d$, with $d \geq 2$, 
the boundaries $\partial D$ and $\partial O$ of which are Lipschitz continuous. 
The domains $D$ and $O$ are such that $\overline{O} \subset D$ and $\Omega:=D \setminus \overline{O}$ is connected.
Let $\Gamma$ be an open non empty subset of $\partial D$. 
For some real $T>0$ and a pair of lateral Cauchy data $(g_0,g_1)$ on $\Gamma \times (0,T)$, 
the inverse obstacle problem consists to find $O$ such that for some function 
$u \in L^2(0,T;H^1(\Omega))$:
\begin{equation}
\label{obstacle}
\left\{
\begin{array}{cccc}
& \partial_t u -\Delta u  = 0&  \text{in} &\Omega \times (0,T)\\
& u=g_0  & \text{on} & \Gamma \times (0,T)\\
& \partial_\nu u=g_1 & \text{on} & \Gamma \times (0,T)\\
& u=0 & \text{on} &\partial O \times (0,T)\\
& u=0 & \text{on} &\Omega \times \{0\},
\end{array}
\right.
\end{equation}
where $\nu$ is the unit outward normal vector on $\partial \Omega$. 
We have to cope with problem (\ref{obstacle}) if for example, starting from a zero temperature in the domain $\Omega$, we try to recover the inclusion $O$ by imposing a heat flux $g_1$ on the accessible 
part of the boundary $\Gamma$ during the interval of time $(0,T)$ and by measuring the resulting temperature on the same part of the boundary during the same interval of time.
On the inaccessible part of the boundary $\partial D \setminus \overline{\Gamma}$, no data is provided.
The assumption $u \in L^2(0,T;H^1(\Omega))$ is sufficient to properly define all the boundary conditions of problem (\ref{obstacle}).
First of all, $u|_{\Gamma \times (0,T)} \in L^2(0,T;H^{1/2}(\Gamma))$ and $u|_{\partial O \times (0,T)} \in L^2(0,T;H^{1/2}(\partial O))$.
Let us now introduce the set $Q=\Omega \times (0,T)$ and the vector field $\UU \in \mathbb{R}^{d+1}$ defined in $Q$ by
\begin{equation} \UU=(\nabla u,-u)=(\partial_{x_i}u,-u),\quad i=1,\cdots,d.\label{vector}\end{equation}
We clearly have
${\rm div}_{d+1}\UU=\Delta u - \partial_t u=0$ in $Q$, which implies that $\UU \in H_{{\rm div},Q}:=\{\UU \in (L^2(Q))^{d+1},\,{\rm div}_{d+1}\UU \in L^2(Q)\}$. As a consequence
we have $\UU\cdot \nu_{d+1} \in H^{-1/2}(\partial Q)$, where $\nu_{d+1}$ is the unit outward normal on $\partial Q$. In particular, we conclude that
$\partial_\nu u|_{\Gamma \times (0,T)} \in H^{-1/2}(\Gamma \times (0,T))$ and $u|_{\Omega \times \{0\}} \in H^{-1/2}(\Omega \times \{0\})$.
\begin{remark}
It is important to note that since in (\ref{obstacle}) we have no boundary condition on the complementary part of $\Gamma$ in $\partial D$, we cannot benefit from some ``hidden regularity" and more generally from classical regularity results for the heat equation.
\end{remark}
\subsection{A uniqueness result}
Uniqueness for problem (\ref{obstacle}) is well-known (see for example \cite{chapko_kress_yoon}).
However we state and prove the result for sake of self-containment and in order to be precise on regularity assumptions.
\begin{theorem}
\label{unique}
For $i=1,2$, let two domains $O_i$ and corresponding functions $u_i \in L^2(0,T;H^1(\Omega_i))$ satisfy problem (\ref{obstacle}) with data $(g_0,g_1) \neq 0$.
Assume in addition that $u_i \in L^2(0,T;C^0(\overline{\Omega_i}))$.
Then we have $O_1=O_2$ and $u_1=u_2$.
\end{theorem}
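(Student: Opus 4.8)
The plan is to combine two classical facts --- unique continuation for the heat equation across a portion of the lateral boundary, and uniqueness for the forward initial--boundary value heat problem --- in a bootstrap that exploits the geometry of the two obstacles. First I would establish that the two solutions coincide in the common exterior region. Let $\mathcal{U}$ be the connected component of the open set $\Omega_1 \cap \Omega_2 = D \setminus (\overline{O_1} \cup \overline{O_2})$ whose boundary contains $\Gamma$; this component exists because $\overline{O_1} \cup \overline{O_2}$ is a compact subset of the open set $D$, so that a one-sided neighbourhood of $\Gamma$ in $D$ is contained in $\Omega_1 \cap \Omega_2$. On $\mathcal{U} \times (0,T)$ the difference $w := u_1 - u_2$ solves $\partial_t w - \Delta w = 0$, and since $u_1$ and $u_2$ carry the same Cauchy data on $\Gamma \times (0,T)$ we have $w = 0$ and $\partial_\nu w = 0$ there, these traces being meaningful exactly as explained for $u$ just after (\ref{vector}), because $(\nabla w,-w) \in H_{{\rm div},Q}$. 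A classical unique continuation theorem for the heat equation from an open subset of the lateral boundary then yields $w \equiv 0$, i.e.\ $u_1 = u_2$ in $\mathcal{U} \times (0,T)$. In particular, if $O_1 = O_2$ then $\Omega_1 = \Omega_2$ is connected and coincides with $\mathcal{U}$, and the proof is complete.

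It remains to rule out $O_1 \neq O_2$, which I would do by contradiction. Assuming $O_1 \neq O_2$, a topological argument relying on the connectedness of $\Omega_1$ and $\Omega_2$ shows that, after possibly exchanging the two indices, there is a point $x_0 \in \partial O_2 \cap \Omega_1$ lying on $\partial\mathcal{U}$; let $\mathcal{O}$ be the connected component of $O_2 \setminus \overline{O_1}$ whose closure contains $x_0$. Since $\mathcal{O} \subset D \setminus \overline{O_1} = \Omega_1$, the function $u_1$ solves the heat equation in $\mathcal{O} \times (0,T)$ and vanishes on $\mathcal{O} \times \{0\}$ by the last line of (\ref{obstacle}). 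The crucial point is that $u_1$ also vanishes on $\partial\mathcal{O} \times (0,T)$: on the part of $\partial\mathcal{O}$ carried by $\partial O_1$ this is the fourth line of (\ref{obstacle}) for $u_1$, while on the part carried by $\partial O_2$ one approaches from the $\mathcal{U}$-side, where $u_1 = u_2$ and where $u_2 = 0$ on $\partial O_2$ --- the passage to the limit being legitimate precisely thanks to the assumption $u_i \in L^2(0,T;C^0(\overline{\Omega_i}))$. A standard energy estimate for the heat equation with vanishing initial and Dirichlet data (licit here since $u_1 \in L^2(0,T;H^1(\mathcal{O}))$ together with the equation gives $u_1 \in C([0,T];L^2(\mathcal{O}))$) then forces $u_1 \equiv 0$ in $\mathcal{O} \times (0,T)$.

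Since $u_1$ solves the heat equation in the connected cylinder $\Omega_1 \times (0,T)$ and vanishes on the non-empty open subset $\mathcal{O} \times (0,T)$, the spatial analyticity of $u_1(\cdot,t)$ in the interior, valid for each $t \in (0,T)$, propagates this to $u_1 \equiv 0$ in $\Omega_1 \times (0,T)$; hence $g_0 = u_1|_{\Gamma \times (0,T)} = 0$ and $g_1 = \partial_\nu u_1|_{\Gamma \times (0,T)} = 0$, contradicting $(g_0,g_1) \neq 0$. Therefore $O_1 = O_2$, and then $u_1 = u_2$ by the first step. I expect the genuinely delicate part to be the topological bookkeeping in the second step --- choosing the appropriate connected component and checking rigorously, at the regularity available, that $u_1$ vanishes on the whole parabolic boundary of $\mathcal{O}$ --- rather than the two unique continuation arguments, which are standard; the hypothesis $u_i \in L^2(0,T;C^0(\overline{\Omega_i}))$ is exactly what makes the traces and limits on the free boundaries $\partial O_i$ licit.
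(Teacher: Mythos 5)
Your overall strategy is the same as the paper's: Holmgren's theorem from the Cauchy data on $\Gamma$ to identify $u_1$ and $u_2$ on the common exterior component, then an energy (decay-of-$L^2$-norm) argument on an intermediate region whose parabolic boundary data vanish, then unique continuation again to contradict $(g_0,g_1)\neq 0$. The paper even makes the mirror-image choice to yours, killing $u_2$ on $R_2=\tilde O\setminus\overline{O_2}$ with $\tilde O:=D\setminus\overline{\tilde\Omega}$, rather than $u_1$ on a component of $O_2\setminus\overline{O_1}$; and it invokes the same Brezis-type lemma (continuity up to the boundary plus $H^1$ plus vanishing boundary values implies $H^1_0$) that your parenthetical remark is implicitly relying on to legitimise the energy identity.

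The one point where your version is genuinely weaker is exactly the step you flag as ``topological bookkeeping,'' and it is more than bookkeeping: with $\mathcal O$ a connected component of $O_2\setminus\overline{O_1}$, a boundary point $y\in\partial\mathcal O\cap\partial O_2$ with $y\notin\overline{O_1}$ need not lie in $\overline{\mathcal U}$. The exterior side of $\partial O_2$ at $y$ may belong to a different connected component of $\Omega_1\cap\Omega_2$ than the one touching $\Gamma$ (think of two overlapping C-shaped obstacles enclosing an interior chamber: each $\Omega_i$ is connected, but $\Omega_1\cap\Omega_2$ is not, and the relevant piece of $\partial O_2$ only sees the chamber). There you know $u_2(y,\cdot)=0$ but you have no way to transfer this to $u_1$, so the claim that $u_1$ vanishes on the whole parabolic boundary of $\mathcal O$ fails as stated. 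The paper's choice of region repairs precisely this: taking $R_2=\tilde O\setminus\overline{O_2}$, i.e.\ everything in $D$ not reachable from $\Gamma$ through $\Omega_1\cap\Omega_2$, minus the obstacle $\overline{O_2}$, one has $\partial R_2\subset\partial O_2\cup\partial\tilde\Omega$; on $\partial O_2$ the condition $u_2=0$ is the obstacle condition itself, and every point of $\partial\tilde\Omega$ is by construction approachable from $\tilde\Omega$, where $u_1=u_2$ has already been established and where one of the two functions vanishes (being on $\partial O_1$ or $\partial O_2$). Any interior chamber is simply absorbed into $R_2$ rather than left adjacent to it. If you replace your $\mathcal O$ by this region (and accordingly argue on $u_2$), the rest of your proof goes through as written.
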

\begin{proof}
Let us consider $\tilde{\Omega}$ the connected component of $D \setminus \overline{O_1 \cup O_2}$ which is in contact with $\Gamma$, and $\tilde{O}:=D \setminus \overline{\tilde{\Omega}}$.
The function $u=u_1-u_2$ satisfies in $\tilde{\Omega}$ the problem
\[
\left\{
\begin{array}{cccc}
& \partial_t u -\Delta u  = 0&  \text{in} & \tilde{\Omega} \times (0,T)\\
& u=0  & \text{on} & \Gamma \times (0,T)\\
& \partial_\nu u=0 & \text{on} & \Gamma \times (0,T).
\end{array}
\right.
\]
By Holmgren's theorem we obtain that $u$ vanishes in $\tilde{\Omega}$, that is $u_1=u_2$ in $\tilde{\Omega}$. Assume that 
$O_2$ is not contained in  $O_1$, which implies that
the open domain $R_2=\tilde{O} \setminus \overline{O_2}$ is not empty.
We have $u_2=0$ on $\partial R_2$ and since $u_2 \in L^2(0,T;C^0(\overline{R_2})) \cap  L^2(0,T;H^1(R_2))$, from
\cite{brezis} (see theorem IX.17 and remark 20) we obtain that $u_2 \in L^2(0,T;H^1_0(R_2))$.
We now use the heat equation for $u_2$ and obtain
\[\frac{d}{dt} \left(\int_{R_2} u_2^2\,dx\right) =- \int_{R_2}|\nabla u_2|^2\,dx \leq 0.\]
The initial condition satisfied by $u_2$ enables us to conclude that $u_2$ vanishes in $R_2 \times (0,T)$.
Whence, from the Holmgren's theorem again, we obtain that $u_2$ vanishes in $\Omega_2 \times (0,T)$, which is incompatible with the fact that
$(g_0,g_1) \neq 0$. This implies that $O_2 \subset O_1$ and we prove similarly that $O_1 \subset O_2$, which completes the proof. 
 \end{proof}
\subsection{Absence of initial condition}
It is a natural question to ask what is the role of the initial condition $u=0$ in $\Omega \times \{0\}$.
It is not difficult to see that, in the absence of initial condition in problem (\ref{obstacle}), uniqueness does not hold.
Indeed, let us consider for $d=2$ a domain $D$ that contains the square $S=(0,1) \times (0,1)$. Such square contains itself the square $\tilde{S}=(1/4,3/4) \times (1/4,3/4)$.
Let us now define
for $n \in \mathbb{N}$ 
\[\la_{n}=4n \pi \sqrt{2},\quad u_{n}(x_1,x_2,t)=e^{-\la_{n}^2 t}\sin(4n\pi x_1) \sin(4n\pi x_2).\]
The function $u_{1}$ is clearly a solution to the heat equation in $D$ that vanishes both on $\partial S$ and $\partial \tilde{S}$.
Hence it is a counterexample to uniqueness since two different obstacles are compatible with the same lateral Cauchy data. One could then expect that uniqueness is restored if we add extra lateral Cauchy data.
This is not true, since
all the functions $u_n$ are solutions to the heat equation in $D$, vanish both on $\partial S$ and $\partial \tilde{S}$ and
provide an infinite (still countable) number of lateral Cauchy data.
\subsection{Non zero initial condition}
In view of the above non uniqueness result when no information on the initial condition is given, it is another natural question whether we have uniqueness 
if we assume that the initial condition is $u=u_0$ in $\Omega \times \{0\}$, where $u_0$ is known but is not identically zero. To our best knowledge this question is open.
However, there exists at most one obstacle which is compatible with two pairs of Cauchy data $(g_0,g_1)$ and $(h_0,h_1)$ associated with two functions $u$ and $v$ with 
$(g_0,g_1) \neq (h_0,h_1)$. This result is simply obtained by applying the uniqueness theorem \ref{unique} to the function $u-v$, which satisfies problem (\ref{obstacle})
for a non vanishing pair of Cauchy data.
\subsection{Non negative initial condition}
We complete this review of uniqueness results by the special case of non negative initial condition and homogeneous Dirichlet data on the whole boundary $\partial D$.
The modified inverse obstacle problem we consider now consists, for some $u_0$ on $\Omega \times \{0\}$ and $g_1$ on $\Gamma \times (0,T)$, to find $O$ such that for some function 
$u$:
\begin{equation}
\label{obstacle_bis}
\left\{
\begin{array}{cccc}
& \partial_t u -\Delta u  = 0&  \text{in} &\Omega \times (0,T)\\
& u=0  & \text{on} & \partial D \times (0,T)\\
& \partial_\nu u=g_1 & \text{on} & \Gamma \times (0,T)\\
& u=0 & \text{on} &\partial O \times (0,T)\\
& u=u_0 & \text{on} &\Omega \times \{0\}.
\end{array}
\right.
\end{equation}
We have the following uniqueness result.
\begin{theorem}
\label{unique_bis}
Let us consider $u_0 \in L^2(\Omega)$.
For $i=1,2$, let two domains $O_i$ and corresponding functions $u_i \in L^2(0,T;H^1(\Omega_i)) \cap C^0([0,T];L^2(\Omega_i))$ satisfy problem (\ref{obstacle_bis}) with $g_1 \neq 0$ and $u_0 \geq 0$.
If we assume in addition that $u_i \in L^2(0,T;C^0(\overline{\Omega_i}))$ 
then we have $O_1=O_2$ and $u_1=u_2$.
\end{theorem}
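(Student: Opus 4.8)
The plan is to follow the proof of Theorem~\ref{unique}, the only genuinely new ingredient being the maximum principle, which is at our disposal here because in~(\ref{obstacle_bis}) the Dirichlet condition is homogeneous on the \emph{whole} of $\partial D$. As in that proof, let $\tilde{\Omega}$ be the connected component of $D\setminus\overline{O_1\cup O_2}$ that is in contact with $\Gamma$ and set $\tilde{O}:=D\setminus\overline{\tilde{\Omega}}$. The function $w=u_1-u_2$ solves the heat equation in $\tilde{\Omega}\times(0,T)$ with vanishing lateral Cauchy data on $\Gamma\times(0,T)$ (note that here $g_0=0$, since both $u_i$ vanish on $\partial D\supset\Gamma$), so Holmgren's theorem yields $u_1=u_2$ in $\tilde{\Omega}\times(0,T)$, and by the continuity assumption $u_i\in L^2(0,T;C^0(\overline{\Omega_i}))$ this equality persists on $\overline{\tilde{\Omega}}$ for a.e.\ $t\in(0,T)$. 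If $O_1=O_2=:O$ this already gives $u_1=u_2$ in $\Omega\times(0,T)$ and we are done; so from now on I assume $O_1\neq O_2$ and look for a contradiction.

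The first step is to record that $u_i\geq 0$ in $\overline{\Omega_i}\times[0,T]$ for $i=1,2$. Since $u_i=0$ on $\partial D\times(0,T)$ and on $\partial O_i\times(0,T)$, and $u_i(\cdot,0)=u_0\geq 0$, testing the weak formulation of~(\ref{obstacle_bis}) with the negative part $u_i^-$ (an admissible test function, given that $u_i\in L^2(0,T;H^1(\Omega_i))\cap C^0([0,T];L^2(\Omega_i))$) yields $\tfrac{1}{2}\tfrac{d}{dt}\|u_i^-(\cdot,t)\|_{L^2(\Omega_i)}^2+\|\nabla u_i^-(\cdot,t)\|_{L^2(\Omega_i)}^2=0$ with $u_i^-(\cdot,0)=0$, hence $u_i^-\equiv 0$.

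The heart of the argument is a localisation near a piece of $\partial\tilde{\Omega}$ where the two obstacles genuinely differ. Since $O_1\neq O_2$ and both are Lipschitz, hence regular open, domains, after possibly exchanging the indices we may assume $O_i\not\subset\overline{O_j}$ with $\{i,j\}=\{1,2\}$. I would then establish, by a topological argument, that $\partial\tilde{\Omega}$ contains a non-empty relatively open piece $\gamma$ with $\gamma\subset\partial O_i$ and $\gamma\cap\overline{O_j}=\emptyset$: the set $R:=\tilde{O}\setminus\overline{O_j}$ is a non-empty open subset of the connected set $\Omega_j$ (it contains $O_i\setminus\overline{O_j}$), its relative boundary in $\Omega_j$ is contained in $\partial\tilde{\Omega}$, and this relative boundary is non-empty because otherwise $R$ would be clopen in $\Omega_j$, forcing $R=\Omega_j$, which is impossible since $\Omega_j$ touches $\Gamma$ while $R\subset\tilde{O}$ does not; any point $x_0$ of $\partial R\cap\partial\tilde{\Omega}\cap\Omega_j$ lies off $\overline{O_j}$, and near it $O_j$ is absent, so $\partial\tilde{\Omega}$ coincides locally with $\partial O_i$, which provides $\gamma$. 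Near $\gamma$ the function $u_j$ solves the heat equation in a full spatial neighbourhood (again because $O_j$ is absent there), so it is $C^\infty$ there for $t>0$; moreover $u_j=u_i$ on $\overline{\tilde{\Omega}}\supset\gamma$ and $u_i=0$ on $\partial O_i\supset\gamma$, whence $u_j=0$ on $\gamma\times(0,T)$. As $u_j\geq 0$ throughout that neighbourhood, each point of $\gamma$ is a spatial minimum of $u_j(\cdot,t)$, so $\nabla u_j=0$, and in particular $\partial_\nu u_j=0$, on $\gamma\times(0,T)$.

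Consequently $u_j$ has vanishing Cauchy data on the hypersurface $\gamma\times(0,T)$, which is non-characteristic for the heat operator $\partial_t-\Delta$; Holmgren's theorem together with the connectedness of $\Omega_j$ then forces $u_j\equiv 0$ in $\Omega_j\times(0,T)$, so that $g_1=\partial_\nu u_j|_{\Gamma\times(0,T)}=0$, contradicting $g_1\neq 0$. This proves $O_1\subset\overline{O_2}$, and by symmetry $O_2\subset\overline{O_1}$; since the $O_i$ are Lipschitz this gives $O_1=O_2$, and then $u_1=u_2$ as noted above. I expect the delicate point to be the topological construction of $\gamma$, i.e.\ ruling out that the difference between $O_1$ and $O_2$ is hidden entirely on $\partial O_j$ or inside a ``pocket'' of $D\setminus\overline{O_1\cup O_2}$ disconnected from $\Gamma$; a more routine difficulty is to justify the maximum principle and the pointwise identity $\nabla u_j=0$ on $\gamma\times(0,T)$ at the regularity assumed, which I would handle using interior parabolic regularity away from $t=0$.
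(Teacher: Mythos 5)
Your proof is correct, and it departs from the paper's argument at the decisive step. The paper also starts with Holmgren on $\tilde{\Omega}$ and also leans on the maximum principle ($u\ge 0$), but it then picks a single point $\bar x\in\partial O_2$ admitting a ball $B(\bar x,\eps)\subset\Omega_1$, shows $u_1(\bar x,\cdot)=0$, and deduces that $u_1$ vanishes near $\bar x$ from the \emph{mean value property over heat balls}: since $u_1\ge 0$ and the heat-ball average of $u_1$ at $(\bar x,t)$ is zero, $u_1$ vanishes on the heat ball, which contains a space-time cylinder from which Holmgren spreads the vanishing to all of $\Omega_1\times(0,T)$. You instead manufacture a relatively open piece $\gamma\subset\partial O_i\cap\Omega_j\cap\partial\tilde{\Omega}$, obtain $u_j=0$ on $\gamma\times(0,T)$, and upgrade this to vanishing Cauchy data via the first-order condition at an interior spatial minimum of the non-negative smooth function $u_j(\cdot,t)$, then apply Holmgren from $\gamma\times(0,T)$. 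Your route trades the heat-ball computation for an elementary derivative test, at the price of a more delicate topological construction --- which is not wasted effort: your clopen argument in the connected set $\Omega_j$ explicitly rules out the possibility that the discrepancy between the obstacles is hidden in a pocket of $D\setminus\overline{O_1\cup O_2}$ not connected to $\Gamma$, a point the paper's proof silently assumes when it writes $u_1(\bar x,\cdot)=u_2(\bar x,\cdot)=0$ (the identity $u_1=u_2$ is only known on $\overline{\tilde{\Omega}}$). Two small remarks. First, once you have the interior point $x_0\in\Omega_j\cap\partial\tilde{\Omega}$ with $u_j(x_0,\cdot)=0$ and $u_j\ge 0$, the strong minimum principle for the heat equation already forces $u_j\equiv 0$ in a space-time neighbourhood of $x_0$ for earlier times, so you could dispense with realising $\gamma$ as a hypersurface and with Holmgren across it; this is in effect what the paper's heat-ball identity accomplishes at $\bar x$. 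Second, your use of Holmgren with Cauchy data on the merely Lipschitz lateral surface $\gamma\times(0,T)$ would classically require a $C^1$ non-characteristic surface, but the paper itself applies Holmgren on $\Gamma\times(0,T)$ with $\Gamma$ only Lipschitz, so you are at the same level of rigour.
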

\begin{proof}
We start the proof exactly the same way as in the proof of theorem \ref{unique} and we reuse the same notations. We hence have $u_1=u_2$ in $\tilde{\Omega}$.
Assume that 
$O_2$ is not contained in  $O_1$, which implies that there exists $\overline{x} \in \partial O_2$ and $\eps>0$ such that $B(\overline{x},\eps) \subset \Omega_1$, where 
$B(\overline{x},\eps)$ is the ball of center $\overline{x}$ and radius $\eps$. By continuity of $u_2$ up to the boundary of $\Omega_2$ and the fact that $u_2=0$ on $\partial O_2 \times (0,T)$, we have $u_1(\overline{x},\cdot)=0$ on $(0,T)$. Thanks to interior regularity of function $u_1$, such equality is pointwise, that is $u_1(\overline{x},t)=0$, for all $t \in (0,T)$.
Now let us use the mean value property for the heat equation (see \cite{evans}).
We define the heat ball $E(x,t)$ of radius $r$ in $\Omega \times (0,T)$ by
\[E(x,t)=\left\{(y,s) \in \Omega \times (0,T),\, \Phi(x-y,t-s) \geq \frac{1}{r^d}\right\},\]
with $\Phi$ the fundamental solution of the heat equation
\[\Phi(z,\tau)=\frac{H(\tau)}{(4\pi \tau)^{d/2}} e^{-|z|^2/4\tau},\]
and $H$ the Heavyside function.
A short computation proves that 
\be E(x,t)=\left\{(y,s) \in \Omega \times (0,T),\, y \in \overline{B(x,\alp(s))},\,  t- \frac{r^2}{4\pi}\leq s \leq t\right\},\label{hb}\ee
with 
\[\alp(s)=\sqrt{2d(t-s) \ln \left(\frac{r^2}{4\pi(t-s)}\right)}.\]
We note that for $(y,s) \in E(x,t)$, $\alp(s) \leq r \sqrt{d/2\pi}$.
As a result, for sufficiently small $r$, $E(\overline{x},t) \subset B(\overline{x},\eps) \times (0,T)$ for all $t \in (0,T)$.
The mean value property implies that
\[u_1(\overline{x},t)=\frac{1}{4r^d} \int_{E(\overline{x},t)}u_1(y,s) \frac{|y-\overline{x}|^2}{(s-t)^2}\,dyds.\] 
By the maximum principle for the heat equation (see \cite{brezis}, theorem X.3), we have that $u_1 \geq 0$ in $\Omega_1 \times (0,T)$. Therefore, that $u_1(\overline{x},t)=0$ for all $t \in (0,T)$ implies that
$u_1=0$ in $E(\overline{x},t)$ for all $t \in (0,T)$.
Now let us denote $2a=r^2/4\pi$ for sake of simplicity.
For any $t \in [2a,T-a/2]$ and $s \in [t-a,t-a/2] \subset [t-2a,t]$, $\alp(s) \geq \sqrt{d a \ln 2}$, which means in view of (\ref{hb}) that for $t \in [2a,T-a/2]$, the heat ball 
$E(\overline{x},t)$ contains the domain
$B(\overline{x},\sqrt{d a \ln 2}) \times (t-a,t-a/2)$.
By Holmgren's theorem, $u_1=0$ in $\Omega_1 \times (t-a,t-a/2)$ for all $t \in [2a,T-a/2]$, which implies that $u_1=0$ in $\Omega_1 \times (a,T-a)$.
Since the result is true for all sufficiently small $a$ we obtain that $u_1=0$ in $\Omega_1 \times (0,T)$, which contradicts
the fact that $g_1 \neq 0$. Then $O_2 \subset O_1$ and the reverse inclusion is obtained the same way.    
\end{proof}
\section{Some quasi-reversibility methods for the heat equation}
In this section we consider the heat equation with lateral Cauchy data and initial condition: for a pair of lateral Cauchy data $(g_0,g_1)$ on $\Gamma \times (0,T)$, find $u \in L^2(0,T;H^1(\Omega))$ such that
\begin{equation}
\label{lateral_initial}
\left\{
\begin{array}{cccc}
& \partial_t u -\Delta u  = 0&  \text{in} &\Omega \times (0,T)\\
& u=g_0  & \text{on} & \Gamma \times (0,T)\\
& \partial_\nu u=g_1 & \text{on} & \Gamma \times (0,T)\\
& u=0 & \text{on} &\Omega \times \{0\}.
\end{array}
\right.
\end{equation}
The problem (\ref{lateral_initial}) is well-known to be ill-posed, however
the Holmgren's theorem implies uniqueness of the solution $u$ with respect to the data $(g_0,g_1)$.
\subsection{A $H^1$-formulation}
We introduce the following open subsets of $\partial Q$: $\tilde{\Gamma}=\partial \Omega \setminus \overline{\Gamma}$,  $\Sigma=\Gamma \times (0,T)$, $\tilde{\Sigma}=\tilde{\Gamma} \times (0,T)$, $S_0=\Omega \times \{0\}$,
$S_T=\Omega \times \{T\}$ as well as the following functional sets. The space $H^{1/2}_{\tilde{\Sigma},S_T}(\Sigma)$ is the set of traces on $\Sigma$ of functions in $H^1(Q)$ which vanish on $\tilde{\Sigma}$ and on $S_T$. Its dual space is denoted 
$H^{-1/2}_{S_0}(\Sigma)$, which coincides with the set of restrictions to $\Sigma$ of distributions of $H^{-1/2}(\partial Q)$ the support of which is contained in $\overline{\Sigma \cup \tilde{\Sigma} \cup S_T}$.
Lastly, for $g_0 \in L^2(0,T;H^{1/2}(\Gamma))$, we set
\[H_g=\{u \in L^2(0,T;H^1(\Omega)),\,\,u|_{\Sigma}=g_0\}, \quad H_0=\{u \in L^2(0,T;H^1(\Omega)),\,\,u|_{\Sigma}=0\},\]
\[\tilde{V}_0=\{\la \in H^1(Q),\,\,\la|_{\tilde{\Sigma}}=0,\,\,\la|_{S_T}=0\}.\]
Due to Poincar\'e inequality, the spaces $H_0$ and $\tilde{V}_0$ can be endowed with the norms $(\int_Q |\nabla \cdot|^2\,dxdt)^{1/2}$ and $||\cdot||$, respectively, where $||\cdot||$ is defined by
\be ||\cdot||^2=\int_Q(\partial_t \cdot)^2\,dxdt + \int_Q |\nabla \cdot|^2\,dxdt\label{norm}\ee
and the corresponding scalar product is denoted by $((\cdot,\cdot))$.  
We will need the following lemma, which is a weak characterization of the solution to problem (\ref{lateral_initial}).
\begin{lemma}
\label{cha}
For $(g_0,g_1) \in L^2(0,T;H^{1/2}(\Gamma)) \times H^{-1/2}_{S_0}(\Sigma)$,
the function $u \in L^2(0,T;H^1(\Omega))$ is the solution to problem (\ref{lateral_initial}) if and only if $u \in H_{g}$ and for all $\mu \in \tilde{V}_0$,
\be 
-\int_Q u\,\partial_t \mu\,dxdt + \int_Q \nabla u \cdot \nabla \mu\,dxdt = \int_\Sigma g_1\,\mu\,dsdt,
\label{equi}
\ee
where the meaning of the last integral is duality between $H^{-1/2}_{S_0}(\Sigma)$ and $H^{1/2}_{\tilde{\Sigma},S_T}(\Sigma)$. 
\end{lemma}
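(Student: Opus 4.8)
The statement is, at heart, the generalized Green formula on the space-time cylinder $Q$ applied to the divergence-free vector field $\UU=(\nabla u,-u)$ from (\ref{vector}), and the plan is to make this precise in both directions while keeping track of the four pieces $\Sigma$, $\tilde{\Sigma}$, $S_0$, $S_T$ of $\partial Q$ and of the duality bracket on $\Sigma$.

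For the direct implication, assume $u\in L^2(0,T;H^1(\Omega))$ solves (\ref{lateral_initial}); then $u\in H_g$ by the first boundary condition. Since $\partial_t u-\Delta u=0$ in $Q$, the field $\UU=(\nabla u,-u)$ belongs to $H_{{\rm div},Q}$ with ${\rm div}_{d+1}\UU=0$, so $\UU\cdot\nu_{d+1}\in H^{-1/2}(\partial Q)$ is well defined and, for every $\mu\in H^1(Q)$,
\[
\langle\UU\cdot\nu_{d+1},\mu\rangle_{\partial Q}=\int_Q\UU\cdot(\nabla\mu,\partial_t\mu)\,dxdt=\int_Q\big(\nabla u\cdot\nabla\mu-u\,\partial_t\mu\big)\,dxdt.
\]
Take now $\mu\in\tilde{V}_0$. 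Its traces on $\tilde{\Sigma}$ and $S_T$ vanish; on $S_0$ the outward space-time normal is $(0,\dots,0,-1)$, hence $\UU\cdot\nu_{d+1}=u|_{t=0}=0$ there thanks to the initial condition; on $\Sigma$ the normal is $(\nu,0)$, hence $\UU\cdot\nu_{d+1}=\partial_\nu u=g_1$. Consequently the boundary term collapses to the pairing $\langle g_1,\mu|_\Sigma\rangle$ between $H^{-1/2}_{S_0}(\Sigma)$ and $H^{1/2}_{\tilde{\Sigma},S_T}(\Sigma)$ --- the vanishing of $\UU\cdot\nu_{d+1}$ on $S_0$ being exactly what ensures that its restriction to $\Sigma$ lies in $H^{-1/2}_{S_0}(\Sigma)$ --- and (\ref{equi}) follows.

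For the converse, let $u\in H_g$ satisfy (\ref{equi}) for all $\mu\in\tilde{V}_0$. Testing first with $\mu\in\mathcal{D}(Q)\subset\tilde{V}_0$, for which the right-hand side of (\ref{equi}) vanishes, gives $\partial_t u-\Delta u=0$ in the sense of distributions in $Q$; in particular $\UU=(\nabla u,-u)\in H_{{\rm div},Q}$ with ${\rm div}_{d+1}\UU=0$, so the Green formula above holds for every $\mu\in\tilde{V}_0$ and, together with (\ref{equi}), yields $\langle\UU\cdot\nu_{d+1},\mu\rangle_{\partial Q}=\langle g_1,\mu|_\Sigma\rangle$ for all such $\mu$. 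Picking $\mu\in\tilde{V}_0$ whose support avoids $\overline{\Sigma}$ (so that $\mu|_\Sigma=0$, while $\mu$ still vanishes on $\tilde{\Sigma}$ and $S_T$), the left-hand side reduces to the pairing of $\UU\cdot\nu_{d+1}$ against the trace of $\mu$ on $S_0$; as these traces fill a dense subspace, we conclude $\UU\cdot\nu_{d+1}=0$ on $S_0$, i.e. $u=0$ on $\Omega\times\{0\}$. Reinjecting this into the Green formula, for arbitrary $\mu\in\tilde{V}_0$ only the $\Sigma$-contribution survives, so $\langle\partial_\nu u,\mu|_\Sigma\rangle_\Sigma=\langle g_1,\mu|_\Sigma\rangle_\Sigma$; since the traces $\mu|_\Sigma$ exhaust $H^{1/2}_{\tilde{\Sigma},S_T}(\Sigma)$ by its very definition, $\partial_\nu u=g_1$ on $\Sigma$. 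With $u|_\Sigma=g_0$ coming from $u\in H_g$, this shows that $u$ solves (\ref{lateral_initial}).

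The algebraic integration-by-parts identity is routine; the points I expect to require care are the localization of the single distribution $\UU\cdot\nu_{d+1}\in H^{-1/2}(\partial Q)$ onto the open faces $\Sigma,\tilde{\Sigma},S_0,S_T$ of the merely Lipschitz (and cornered) boundary $\partial Q$, and the density of the traces on $S_0$ of functions of $\tilde{V}_0$ supported away from $\overline{\Sigma}$. Both rely on the description recalled above of $H^{-1/2}_{S_0}(\Sigma)$ as the restrictions to $\Sigma$ of elements of $H^{-1/2}(\partial Q)$ whose support sits in $\overline{\Sigma\cup\tilde{\Sigma}\cup S_T}$, together with standard trace theory in $H^1(Q)$; this is the one place where the precise choice of function spaces in the statement really matters.
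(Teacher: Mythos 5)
Your proposal is correct and follows essentially the same route as the paper: both rest on the divergence-free space-time field $\UU=(\nabla u,-u)$, the generalized Green formula on $Q$, and the identification of the normal trace $\UU\cdot\nu_{d+1}$ on $\Sigma\cup S_0$. The only (cosmetic) difference is in the harder direction, where the paper compares $\UU\cdot\nu_{d+1}$ with the zero-extension $\tilde g_1$ of $g_1$ to $S_0$ all at once and then restricts to the two open faces, whereas you separate the $S_0$ and $\Sigma$ contributions by hand via test functions supported away from $\overline{\Sigma}$ --- both hinge on the same localization of an $H^{-1/2}(\partial Q)$ distribution onto the open faces, which you correctly flag as the delicate point.
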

\begin{proof}
To begin with, let us assume that $u \in H_g$ and satisfies the weak formulation (\ref{equi}). 
The definition of $V_g$ immediately implies that $u=g_0$ on $\Gamma \times (0,T)$.
By first choosing $\mu =\phi \in C^\infty_0(Q)$, we obtain $\partial_t u -\Delta u=0$ 
in $Q$ in the distributional sense. By using the definition (\ref{vector}), we conclude that ${\rm div}_{d+1} \UU=0$. 
In addition, from a classical integration by parts formula, we have for all $\mu \in H^1(Q)$,
\[\int_Q \UU \cdot \nabla_{d+1} \mu\,dX=- \int_Q {\rm div}_{d+1}\UU\,\mu\, dX + \int_{\partial Q} \UU\cdot \nu_{d+1}\,\mu\,dS,\]
where $\nabla_{d+1}=(\nabla,\partial_t)$, $X$ is the Lebesgue measure on $Q$ while $S$ is the corresponding surface measure on $\partial Q$ and the last integral has the meaning of duality between $H^{-1/2}(\partial Q)$ and $H^{1/2}(\partial Q)$.  
Now, for $\mu \in \tilde{V}_0$ and given that ${\rm div}_{d+1} \UU=0$, we obtain
\be \int_Q \UU \cdot \nabla_{d+1} \mu\,dX=\int_{\Sigma \cup S_0} \UU\cdot \nu_{d+1}\,\mu\,dS,\label{eq1}\ee
where the last integral has the meaning of duality between $H^{-1/2}(\Sigma \cup S_0)$ and $H^{1/2}_{00}(\Sigma \cup S_0)$, and $H^{1/2}_{00}(\Sigma \cup S_0)$ denotes the space of traces on $\Sigma \cup S_0$ of functions in $H^1(Q)$ which vanish on $\tilde{\Sigma} \cup S_T$.
The weak formulation (\ref{equi}) is equivalent to
\[\int_Q \UU \cdot \nabla_{d+1} \mu\,dX = \int_\Sigma g_1\,\mu\,dS, \quad \forall \mu \in \tilde{V}_0\]
that is to
\be \int_Q \UU \cdot \nabla_{d+1} \mu\,dX= \int_{\Sigma \cup S_0} \tilde{g}_1 \,\mu\,dS, \quad \forall \mu \in \tilde{V}_0, \label{eq2}\ee 
where $\tilde{g}_1$ is the extension by $0$ of $g_1$ to $\Sigma \cap S_0$ and the last integral has the meaning of duality between $H^{-1/2}(\Sigma \cup S_0)$ and $H^{1/2}_{00}(\Sigma \cup S_0)$. The distribution $\tilde{g}_1$ is well defined in $H^{-1/2}(\Sigma \cup S_0)$ from the fact that $g_1 \in H^{-1/2}_{S_0}(\Sigma)$.
Comparing equations (\ref{eq1}) and (\ref{eq2}) we end up with
$\UU\cdot \nu_{d+1}=\tilde{g}_1$ in the sense of $H^{-1/2}(\Sigma \cup S_0)$, which implies both $\partial_\nu u=g_1$ on $\Sigma$ and $u=0$ on $S_0$, that is $u$ solves problem (\ref{lateral_initial}).
We prove similarly that if $u \in L^2(0,T;H^1(\Omega))$ solves the problem (\ref{lateral_initial}) then it satisfies $u \in H_g$ and the weak formulation (\ref{equi}).
\end{proof}

The $H^1$-formulation of quasi-reversibility consists of the following problem for some real $\eps>0$:
for $(g_0,g_1) \in L^2(0,T;H^{1/2}(\Gamma)) \times H^{-1/2}_{S_0}(\Sigma)$, find $(u_{\eps},\la_{\eps}) \in H_g \times \tilde{V}_0$ such that
for all $(v,\mu) \in H_0 \times \tilde{V}_0$,
\be 
\left\{
\begin{array}{l}
\displaystyle 
-\int_Q v\, \partial_t \la_{\eps}\,dxdt + \int_Q \nabla v \cdot \nabla \la_{\eps}\,dxdt
+ \eps \int_Q \nabla u_{\eps} \cdot \nabla v\,dxdt=0,\\
\displaystyle
-\int_Q u_{\eps}\,\partial_t\mu\,dxdt - \int_Q \partial_t \la_{\eps}\,\partial_t \mu\,dxdt \\
\displaystyle  +\int_Q \nabla u_{\eps} \cdot \nabla \mu\,dxdt  
-\int_Q \nabla \la_{\eps} \cdot \nabla \mu\,dxdt= 
\int_{\Sigma }g_1\,\mu\,dsdt,
\end{array}\right.
\label{qr_lateral_initial}
\ee
where the last integral has the meaning of duality between $H^{-1/2}_{S_0}(\Sigma)$ and $H^{1/2}_{\tilde{\Sigma},S_T}(\Sigma)$. 
\begin{theorem}
\label{wellposed}
For any $(g_0,g_1) \in L^2(0,T;H^{1/2}(\Gamma)) \times H^{-1/2}_{S_0}(\Sigma)$, the problem (\ref{qr_lateral_initial}) has a unique solution $(u_{\eps},\la_{\eps})$ in $H_g \times \tilde{V}_0$.
Furthermore,
if there exists a (unique) solution $u \in L^2(0,T;H^1(\Omega))$ to problem (\ref{lateral_initial}) associated with data $(g_0,g_1)$,
then the solution $(u_{\eps},\la_{\eps})$ to problem (\ref{qr_lateral_initial}) associated with the same data $(g_0,g_1)$ satisfies
\[\lim_{\eps \rightarrow 0} u_{\eps}=u \quad {\rm in}\quad L^2(0,T;H^1(\Omega)),\quad 
\lim_{\eps \rightarrow 0} \la_{\eps}=0 \quad {\rm in} \quad H^{1}(Q).\]
\end{theorem}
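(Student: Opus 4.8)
The plan is to recast (\ref{qr_lateral_initial}) as a single well-posed variational problem on the Hilbert space $H_0 \times \tilde V_0$ by the Lax--Milgram lemma, and then to obtain the convergence from an energy identity combined with Lemma \ref{cha} and Holmgren's theorem. First I would remove the inhomogeneity in $g_0$: picking any lift $u_g \in H_g$, I look for $u_\eps = u_g + \hat u_\eps$ with $\hat u_\eps \in H_0$. Introduce the form $a(w,\mu) = -\int_Q w\,\partial_t \mu\,dxdt + \int_Q \nabla w \cdot \nabla \mu\,dxdt$ of Lemma \ref{cha}, together with the scalar product $c(w,v) = \int_Q \nabla w \cdot \nabla v\,dxdt$ of $H_0$ and the scalar product $b = ((\cdot,\cdot))$ of $\tilde V_0$. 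Using the Poincar\'e inequalities recalled before (\ref{norm}) --- valid because elements of $H_0$ vanish on $\Sigma$ and elements of $\tilde V_0$ vanish on $\tilde\Sigma \cup S_T$ --- the three forms are continuous on $H_0 \times \tilde V_0$, $H_0 \times H_0$ and $\tilde V_0 \times \tilde V_0$ respectively, and the linear functional $\mu \mapsto \int_\Sigma g_1\,\mu\,dsdt$ is continuous on $\tilde V_0$ because the trace on $\Sigma$ of $\mu \in \tilde V_0$ lies in $H^{1/2}_{\tilde\Sigma,S_T}(\Sigma)$ continuously with respect to $\|\mu\|$. Then (\ref{qr_lateral_initial}) reads: find $(\hat u_\eps,\la_\eps) \in H_0 \times \tilde V_0$ with $A((\hat u_\eps,\la_\eps),(v,\mu)) = L(v,\mu)$ for all $(v,\mu) \in H_0 \times \tilde V_0$, where $A((w,\la),(v,\mu)) = \eps\,c(w,v) + a(v,\la) + a(w,\mu) - b(\la,\mu)$ and $L$ collects the terms $-\eps\,c(u_g,v)$, $\int_\Sigma g_1\,\mu\,dsdt$ and $-a(u_g,\mu)$, which is a continuous linear form on $H_0 \times \tilde V_0$.

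The observation that makes this work is that $A$ is ``$T$-coercive'': the map $(w,\la) \mapsto (w,-\la)$ is an isometry of $H_0 \times \tilde V_0$, and when $A((w,\la),\cdot)$ is tested against $(w,-\la)$ the off-diagonal terms $a(v,\la)$ and $a(w,\mu)$ cancel while $-b(\la,\mu)$ turns into $+b(\la,\la)$, giving
\[ A\big((w,\la),(w,-\la)\big) = \eps\,c(w,w) + b(\la,\la) = \eps\,\|w\|_{H_0}^2 + \|\la\|_{\tilde V_0}^2 \geq \min(\eps,1)\,\big(\|w\|_{H_0}^2 + \|\la\|_{\tilde V_0}^2\big). \]
Equivalently, $((w,\la),(v,\mu)) \mapsto A((w,\la),(v,-\mu))$ is a continuous coercive bilinear form, so the Lax--Milgram lemma yields a unique $(\hat u_\eps,\la_\eps)$, hence a unique $(u_\eps,\la_\eps) \in H_g \times \tilde V_0$; this proves the first assertion.

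For the convergence, assume $u \in L^2(0,T;H^1(\Omega))$ solves (\ref{lateral_initial}). By Lemma \ref{cha}, $u \in H_g$ and $a(u,\mu) = \int_\Sigma g_1\,\mu\,dsdt$ for all $\mu \in \tilde V_0$, so the pair $(u,0)$ satisfies the second equation of (\ref{qr_lateral_initial}) exactly. Set $e_\eps := u_\eps - u \in H_0$. Subtracting the second equations gives $a(e_\eps,\mu) = b(\la_\eps,\mu)$ for all $\mu$, hence $a(e_\eps,\la_\eps) = \|\la_\eps\|_{\tilde V_0}^2$; and the first equation with $v = e_\eps$ gives $a(e_\eps,\la_\eps) + \eps\,c(u_\eps,e_\eps) = 0$ with $c(u_\eps,e_\eps) = \|e_\eps\|_{H_0}^2 + c(u,e_\eps)$. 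Combining,
\[ \|\la_\eps\|_{\tilde V_0}^2 + \eps\,\|e_\eps\|_{H_0}^2 = -\eps\,c(u,e_\eps) \leq \eps\,\|\nabla u\|_{L^2(Q)}\,\|e_\eps\|_{H_0}, \]
which yields both $\|e_\eps\|_{H_0} \leq \|\nabla u\|_{L^2(Q)}$ and $\|\la_\eps\|_{\tilde V_0}^2 \leq \eps\,\|\nabla u\|_{L^2(Q)}^2$, so $\la_\eps \to 0$ in $H^1(Q)$. As $(e_\eps)$ is bounded in $H_0$, along a subsequence $e_\eps \rightharpoonup e$ weakly in $H_0$; passing to the limit in $a(e_\eps,\mu) = ((\la_\eps,\mu))$ (the left side is continuous linear in $e_\eps$, the right side tends to $0$) gives $a(e,\mu) = 0$ for all $\mu \in \tilde V_0$, so $e \in H_0$ solves (\ref{lateral_initial}) with zero data and Holmgren's theorem forces $e = 0$. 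Finally, dividing the energy identity by $\eps$ gives $\|e_\eps\|_{H_0}^2 = -c(u,e_\eps) - \eps^{-1}\|\la_\eps\|_{\tilde V_0}^2 \leq -c(u,e_\eps) \to -c(u,e) = 0$ along the subsequence, hence $e_\eps \to 0$ strongly in $H_0$; a routine subsequence argument extends this to the whole family, which by the Poincar\'e equivalence of norms is exactly $u_\eps \to u$ in $L^2(0,T;H^1(\Omega))$.

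I expect the only delicate point to be the functional-analytic bookkeeping rather than the estimates: verifying that the forms and $L$ are continuous for the chosen norms --- in particular the duality pairing of $g_1 \in H^{-1/2}_{S_0}(\Sigma)$ with traces in $H^{1/2}_{\tilde\Sigma,S_T}(\Sigma)$ --- and that the Poincar\'e inequalities on $H_0$ and $\tilde V_0$ genuinely hold because functions there vanish on a portion of $\partial Q$ of positive measure. The algebraic core ($T$-coercivity of $A$ and the energy identity for $e_\eps$) is short, and the step upgrading weak to strong convergence of $u_\eps$ is precisely the one where the nonnegative term $\eps^{-1}\|\la_\eps\|_{\tilde V_0}^2$ is kept on the favourable side of the energy identity.
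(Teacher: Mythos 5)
Your proof is correct and follows essentially the same route as the paper: Lax--Milgram on $H_0\times\tilde V_0$ after lifting $g_0$, an energy identity obtained by testing the error system with $(u_\eps-u,\la_\eps)$, the bounds $\|u_\eps-u\|_{H_0}\le\|\nabla u\|_{L^2(Q)}$ and $\|\la_\eps\|\le\sqrt{\eps}\,\|\nabla u\|_{L^2(Q)}$, weak-compactness plus Lemma \ref{cha} and uniqueness to identify the limit, and a subsequence argument. The only cosmetic differences are that the paper builds the sign flip into the bilinear form $A$ (so it is coercive on the diagonal) where you phrase it as $T$-coercivity via $(w,\la)\mapsto(w,-\la)$, and that you upgrade weak to strong convergence by keeping $\eps^{-1}\|\la_\eps\|^2$ on the favourable side rather than invoking norm convergence; both are equivalent.
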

\begin{proof}
We first prove well-posedness of the quasi-reversibility formulation.
Since we have assumed that $g_0 \in L^2(0,T;H^{1/2}(\Gamma))$, the set $H_g$ contains at least one element $\Phi$. In order to use the Lax-Milgram lemma, 
we define $\hat{u}_{\eps}=u_{\eps}-\Phi$ so that
the problem (\ref{qr_lateral_initial}) is equivalent to find $(\hat{u}_{\eps},\la_{\eps}) \in H_0 \times \tilde{V}_0$ such that for all $(v,\mu) \in H_0 \times \tilde{V}_0$, 
\[A((\hat{u}_{\eps},\la_{\eps}),(v,\mu))=L(v,\mu).\]
Here $L$ is a continuous linear form on $H_0 \times \tilde{V}_0$ (which we don't give explicitly as a function of $\Phi$), and $A$ is the continuous bilinear form on $H_0 \times \tilde{V}_0$ given by
\be
\left\{
\begin{array}{l}
\displaystyle 
A((\hat{u},\la),(v,\mu))= -\int_Q v\,\partial_t\la\,dxdt
+ \int_Q \nabla v \cdot \nabla \la\,dxdt + \int_Q \hat{u}\, \partial_t \mu\,dxdt \\
\displaystyle
- \int_Q \nabla \hat{u} \cdot \nabla \mu\,dxdt + \eps \int_Q \nabla \hat{u} \cdot \nabla v\,dxdt  
+\int_Q \partial_t \la \,\partial_t \mu\,dxdt   
+ \int_Q \nabla \la \cdot \nabla \mu\,dxdt.
\end{array}\right.
\label{a}
\ee
The Lax-Milgram lemma relies on the
coercivity of $A$ on the space $H_0 \times \tilde{V}_0$, which 
is straightforward from 
\be A((\hat{u},\la),(\hat{u},\la))=\eps\, ||\hat{u}||_{L^2(0,T;H^1(\Omega))}^2 + ||\la||^2.\label{coercif}\ee
Now let us prove the convergence of the quasi-reversibility solution to the exact solution for exact data. 
By the lemma \ref{cha}, the exact solution satisfies $u \in H_g$ and (\ref{equi}).
By subtracting the equation (\ref{equi}) to the second equation of problem (\ref{qr_lateral_initial}), we obtain that for all $(v,\mu) \in H_0 \times \tilde{V}_0$,
\be 
\left\{
\begin{array}{l}
\displaystyle 
-\int_Q v\,\partial_t\la_\eps\,dxdt+
\int_Q \nabla v \cdot \nabla \la_\eps\,dxdt
+ \eps \int_Q \nabla u_\eps \cdot \nabla v\,dxdt=0,\\
\displaystyle
-\int_Q (u_\eps-u)\,\partial_t\mu\,dxdt -\int_Q \partial_t \la_\eps\, \partial_t \mu\,dxdt\\
 \displaystyle + \int_Q \nabla (u_\eps-u) \cdot \nabla \mu\,dxdt  
- \int_Q \nabla \la_\eps \cdot  \nabla \mu\,dxdt= 0.
\end{array}\right.
\label{qr_mod}
\ee
We select the test functions $v$ and $\mu$ as $v=u_\eps-u \in H_0$ and $\mu=\la_\eps \in \tilde{V}_0$ in the above system (\ref{qr_mod}). By subtracting the two obtained equations, we end up with
\be
\eps \int_Q \nabla u_\eps \cdot \nabla (u_\eps-u)\,dxdt
+ \int_Q \left(\partial_t \la_\eps \right)^2\,dxdt 
+ \int_Q |\nabla \la_\eps|^2\,dxdt= 0,
\label{qr_modmod}
\ee
which can be simply rewritten as
\[\eps\, ||u_\eps||_{L^2(0,T;H^1(\Omega))}^2 + ||\la_\eps||^2=\eps\, ((u_\eps,u))_{L^2(0,T;H^1(\Omega))}.\]
It is then easy to derive that
\be ||u_\eps||_{L^2(0,T;H^1(\Omega))} \leq ||u||_{L^2(0,T;H^1(\Omega))},\quad ||\la_\eps|| \leq \sqrt{\eps}\, ||u||_{L^2(0,T;H^1(\Omega))}. \label{maj}\ee
From the first majoration (\ref{maj}) $u_\eps$ is bounded in $L^2(0,T;H^1(\Omega))$.
There exists a subsequence of $u_\eps$, still denoted $u_\eps$, that weakly converges to some $w\in L^2(0,T;H^1(\Omega))$, which happens
to belong to the set $H_g$ since such set is weakly closed.\\
Passing to the limit in the second equation of (\ref{qr_lateral_initial}) and using the second majoration (\ref{maj}), we obtain that for all $\mu \in \tilde{V}_0$,
\[
-\int_Q w\,\partial_t\mu\,dxdt + \int_Q \nabla w \cdot \nabla \mu\,dxdt = \int_\Sigma g_1\,\mu\,dsdt,
\]
that is $w$ solves problem (\ref{lateral_initial}) by using lemma \ref{cha} again. Uniqueness in problem (\ref{lateral_initial}) implies that $w=u$, so that $u_\eps$ weakly converges to $u$ in $L^2(0,T;H^1(\Omega))$.
The weak convergence and the first majoration (\ref{maj}) imply strong convergence.
A classical contradiction argument proves that the whole sequence $u_\eps$ (not only the subsequence) converges to $u$ in $L^2(0,T;H^1(\Omega))$.
\end{proof}
\begin{remark}
Our $H^1$-formulation of quasi-reversibility (\ref{qr_lateral_initial}) can be considered as an improvement of the formulation (13) proposed in \cite{becache_bourgeois_darde_franceschini} in the sense that the regularity of $u$ is only $L^2(0,T;H^1(\Omega))$ instead of $H^1(Q)$, which coincides with $L^2(0,T;H^1(\Omega)) \cap H^1(0,T;L^2(\Omega))$.
\end{remark}
\subsection{Some $H_{\rm div}$-formulations}
Another family of mixed quasi-reversibility methods can be proposed by rewriting problem (\ref{lateral_initial}) as
\begin{equation}
\label{lateral_initial_var}
\left\{
\begin{array}{cccc}
& \partial_t u - {\rm div}\, \pp  = 0&  \text{in} &\Omega \times (0,T)\\
& \nabla u  - \pp  = 0&  \text{in} &\Omega \times (0,T)\\
& u=g_0  & \text{on} & \Gamma \times (0,T)\\
& \pp\cdot \nu =g_1 & \text{on} & \Gamma \times (0,T)\\
& u=0 & \text{on} &\Omega \times \{0\}.
\end{array}
\right.
\end{equation}
We assume in this section that the exact solution $u$ satisfies the more restrictive condition $u \in H^1(Q)$.
\subsubsection{A basic formulation}
Let us introduce the space $H^{1/2}_{S_0}(\Sigma)$ of traces on $\Sigma$ of functions in $H^1(Q)$ which vanish on $S_0$. For $g_0 \in H^{1/2}_{S_0}(\Sigma)$ and $g_1 \in L^2(0,T;H^{-1/2}(\Gamma))$, we also consider the sets
\[V_g=\{u \in H^1(Q),\,\,u|_{\Sigma}=g_0,\,\,u|_{S_0}=0\},\quad V_0=\{u \in H^1(Q),\,\,u|_{\Sigma}=0\,\,u|_{S_0}=0\},\]
\[W_g=\{\pp \in L^2(0,T;H_{{\rm div},\Omega}),\,\pp\cdot \nu|_{\Sigma}=g_1\},\quad W_0=\{\pp \in L^2(0,T;H_{{\rm div},\Omega}),\,\pp\cdot \nu|_{\Sigma}=0\},\]
where $H_{{\rm div},\Omega}$ denotes the space of vector functions $\pp \in (L^2(\Omega))^d$ such that ${\rm div}\,\pp \in L^2(\Omega)$. The spaces $H_{{\rm div},\Omega}$ and $L^2(0,T;H_{{\rm div},\Omega})$ 
are naturally 
endowed with the norms $||\cdot||_{{\rm div},\Omega}$ and $||\cdot||_{\rm div}$ defined by
\[||\cdot||_{{\rm div},\Omega}^2=\int_\Omega \left(|\cdot|^2 + ({\rm div}\, \cdot)^2\right)dx,\quad 
||\cdot||_{\rm div}^2=\int_0^T ||\cdot||_{{\rm div},\Omega}^2\,dt,\]
respectively. 
The scalar product which corresponds to norm $||\cdot||_{\rm div}$ is denoted by $((\cdot,\cdot))_{\rm div}$. The space $V_0$ is endowed with the norm $||\cdot||$ already defined by (\ref{norm}).
In view of (\ref{lateral_initial_var}), another way to regularize problem (\ref{lateral_initial}) is for some real $\eps>0$:
for $(g_0,g_1) \in H^{1/2}_{S_0}(\Sigma) \times L^2(0,T;H^{-1/2}(\Gamma))$, find $(u_\eps,\pp_\eps) \in V_g \times W_g$ which minimizes
the functional
\[J_\eps(u,\pp)=\int_Q \left((\partial_t u - {\rm div}\, \pp)^2 + |\nabla u - \pp|^2\right)dxdt + \eps \int_Q \left((\partial_t u)^2 + |\nabla u|^2 + |\pp|^2 + ({\rm div}\, \pp)^2\right)dxdt.\]
The optimality for such minimization problem leads to the following mixed formulation for some $\eps>0$:
for $(g_0,g_1) \in H^{1/2}_{S_0}(\Sigma) \times L^2(0,T;H^{-1/2}(\Gamma))$, find $(u_\eps,\pp_\eps) \in V_g \times W_g$ such that for all $(v,\qq) \in V_0 \times W_0$
\be 
\left\{
\begin{array}{l}
\displaystyle \int_Q \left(\partial_t u_\eps-{\rm div}\, \pp_\eps)\partial_t v + (\nabla u_\eps - \pp_\eps)\cdot \nabla v\right)dxdt \\
\displaystyle 
+ \eps \int_Q \left(\partial_t u_\eps\,\partial_t v + \nabla u_\eps \cdot \nabla v\right)dxdt=0,
\\
\displaystyle
\int_Q \left(({\rm div}\, \pp_\eps-\partial_t u_\eps){\rm div}\,\qq + (\pp_\eps-\nabla u_\eps)\cdot \qq\right)dxdt \\
\displaystyle + \eps \int_Q \left(\pp_\eps\cdot \qq + ({\rm div}\, \pp_\eps)({\rm div}\, \qq)\right)dxdt=0.
\end{array}\right.
\label{qr_lateral_initial_var}
\ee
\subsubsection{A relaxed formulation}
In practice, the lateral Cauchy data $(g_0,g_1)$ on $\Gamma \times (0,T)$ are measurements and then are likely to be corrupted by noise. 
It is then tempting to modify the formulation (\ref{qr_lateral_initial_var}) as follows: on the one hand we assume that the data $g_0,g_1$ belong to $L^2(0,T;L^2(\Gamma))$, on the other hand we take them into account in a weak way rather than in a strong way. 
To this aim, the strong conditions $u=g_0$ and $\pp\cdot \nu=g_1$ on $\Gamma \times (0,T)$ have to be removed from the sets $V_g$ and $W_g$, respectively.
Moreover, since for a vector function $\pp$ in $L^2(0,T;H_{{\rm div},\Omega})$ the trace $\pp \cdot \nu$ on $\Gamma \times (0,T)$ is only in $L^2(0,T;H^{-1/2}(\Gamma))$ 
and not in $L^2(0,T;L^2(\Gamma))$ in general, we have to include such regularity assumption within the space of interest.
We will denote as $H_{{\rm div},\Omega,\Gamma}$ the space of vector functions $\pp \in H_{{\rm div},\Omega}$ such that $\pp \cdot \nu \in L^2(\Gamma)$. The spaces 
$H_{{\rm div},\Omega,\Gamma}$ and $L^2(0,T;H_{{\rm div},\Omega,\Gamma})$, 
are naturally 
endowed with the norms $||\cdot||_{{\rm div},\Omega,\Gamma}$ and $||\cdot||_{{\rm div},\Sigma}$ defined by
\[||\cdot||_{{\rm div},\Omega,\Gamma}^2=\int_\Omega \left(|\cdot|^2 + ({\rm div}\, \cdot)^2\right)dx + \int_\Gamma (\cdot\cdot \nu)^2\,ds ,\quad 
||\cdot||_{{\rm div},\Sigma}^2=\int_0^T ||\cdot||_{{\rm div},\Omega,\Gamma}^2\,dt,\]
respectively. 

By denoting 
$V=\{v \in H^1(Q),\,\,v|_{S_0}=0\}$ and $W=L^2(0,T;H_{{\rm div},\Omega,\Gamma})$,
we now consider 
the relaxed 
mixed formulation for some $\eps>0$:
for $(g_0,g_1) \in L^2(0,T;L^2(\Gamma)) \times L^2(0,T;L^2(\Gamma))$, find $(v_\eps,\qq_\eps) \in V \times W$ such that for all $(v,\qq) \in V \times W$
\be 
\left\{
\begin{array}{l}
\displaystyle \int_Q \left((\partial_t v_\eps-{\rm div}\, \qq_\eps)\partial_t v + (\nabla v_\eps - \qq_\eps)\cdot \nabla v\right)dxdt + \int_\Sigma v_\eps\, v\,dsdt\\
\displaystyle
+ \eps \int_Q \left(\partial_t v_\eps\,\partial_t v + \nabla v_\eps \cdot \nabla v\right)dxdt=  \int_\Sigma g_0\,v\,dsdt,
\\
\displaystyle
\int_Q \left(({\rm div}\, \qq_\eps-\partial_t v_\eps){\rm div}\,\qq + (\qq_\eps-\nabla v_\eps)\cdot \qq\right)dxdt + \int_\Sigma (\qq_\eps\cdot \nu) (\qq \cdot \nu)\,dsdt \\
\displaystyle +\eps \int_Q \left(\qq_\eps \cdot \qq + ({\rm div}\, \qq_\eps)({\rm div}\, \qq)\right)dxdt=\int_\Sigma g_1 (\qq \cdot \nu)\,dsdt. 
\end{array}\right.
\label{qr_lateral_initial_var2}
\ee
\begin{theorem}
\label{wellposed_var}
For any $(g_0,g_1) \in H^{1/2}_{S_0}(\Sigma) \times L^2(0,T;H^{-1/2}(\Gamma))$, the problem (\ref{qr_lateral_initial_var}) possesses a unique solution $(u_\eps,\pp_\eps)$ in $W_g \times V_g$. For any 
$(g_0,g_1) \in L^2(0,T;L^2(\Gamma)) \times L^2(0,T;L^2(\Gamma))$, the problem (\ref{qr_lateral_initial_var2}) possesses a unique solution $(v_\eps,\qq_\eps)$ in $W \times V$.
Furthermore,
if there exists a (unique) solution $u \in H^1(Q)$ to problem (\ref{lateral_initial}) associated with data $(g_0,g_1)$,
then the solution $(u_{\eps},\pp_{\eps})$ to problem (\ref{qr_lateral_initial_var}) and the solution $(v_{\eps},\qq_{\eps})$ to problem (\ref{qr_lateral_initial_var2}) associated with the same data $(g_0,g_1)$ satisfy
\[\lim_{\eps \rightarrow 0} u_{\eps},v_\eps=u \quad {\rm in}\quad H^1(Q),\quad 
\lim_{\eps \rightarrow 0} \pp_{\eps},\qq_\eps=\nabla u \quad {\rm in} \quad L^2(0,T;H_{{\rm div},\Omega}).\]
\end{theorem}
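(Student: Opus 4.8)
The plan is to establish all three assertions by closely mimicking the proof of Theorem~\ref{wellposed}, the new feature being that one must now keep simultaneous control of two components, $u_\eps$ (resp.\ $v_\eps$) and $\pp_\eps$ (resp.\ $\qq_\eps$). For the well-posedness of (\ref{qr_lateral_initial_var}) I would first lift the data: the hypotheses on $(g_0,g_1)$ make $V_g$ and $W_g$ nonempty, so fixing $\Phi\in V_g$, $\boldsymbol{\Psi}\in W_g$ and setting $\hat u_\eps=u_\eps-\Phi$, $\hat\pp_\eps=\pp_\eps-\boldsymbol{\Psi}$ turns (\ref{qr_lateral_initial_var}) into $A\big((\hat u_\eps,\hat\pp_\eps),(v,\qq)\big)=L(v,\qq)$ on $V_0\times W_0$, with $A$ the symmetric bilinear form keeping only the terms in the unknowns and $L$ the linear form gathering the $\Phi,\boldsymbol{\Psi}$ contributions. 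Continuity of $A$ and $L$ is immediate (each term is a product of $L^2$-seminorms bounded by the graph norms), and coercivity follows by evaluating $A$ on the diagonal: by the elementary identities $(a-b)a-(a-b)b=(a-b)^2$ for scalars and for vectors, the data-fitting terms recombine into $\int_Q\big((\partial_t\hat u-{\rm div}\,\hat\pp)^2+|\nabla\hat u-\hat\pp|^2\big)\,dxdt\ge0$, so $A\big((\hat u,\hat\pp),(\hat u,\hat\pp)\big)\ge\eps\big(\|\hat u\|^2+\|\hat\pp\|_{\rm div}^2\big)$, and Poincaré makes $\|\cdot\|$ a norm on $V_0$. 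For (\ref{qr_lateral_initial_var2}) no lifting is needed; evaluating its bilinear form on the diagonal produces, besides the residual squares, the extra nonnegative boundary terms $\int_\Sigma v^2\,dsdt$ and $\int_\Sigma(\qq\cdot\nu)^2\,dsdt$, which together with the $\eps$-terms dominate $\min(\eps,1)\big(\|v\|^2+\|\qq\|_{{\rm div},\Sigma}^2\big)$ (Poincaré again controls $v$ since $v|_{S_0}=0$), while continuity of the new boundary terms and of the right-hand side $\int_\Sigma g_0 v\,dsdt+\int_\Sigma g_1(\qq\cdot\nu)\,dsdt$ uses the trace inequality $H^1(Q)\hookrightarrow L^2(\Sigma)$ and the definition of $\|\cdot\|_{{\rm div},\Sigma}$. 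Lax--Milgram then applies in both cases.

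Next, assume the exact solution $u\in H^1(Q)$ exists. I would use that (\ref{qr_lateral_initial_var}) and (\ref{qr_lateral_initial_var2}) are the Euler--Lagrange equations of $J_\eps$ (over $V_g\times W_g$) and of the functional obtained from $J_\eps$ by replacing the constraints $u|_\Sigma=g_0$, $\pp\cdot\nu|_\Sigma=g_1$ with the penalties $\int_\Sigma(u-g_0)^2\,dsdt$, $\int_\Sigma(\pp\cdot\nu-g_1)^2\,dsdt$ (over $V\times W$). The key point is that $(u,\nabla u)$ is admissible for both problems — it lies in $V_g\times W_g$, and also in $V\times W$ because $\nabla u\in L^2(0,T;H_{{\rm div},\Omega})$ (as $\Delta u=\partial_t u\in L^2(Q)$) and $(\nabla u)\cdot\nu|_\Gamma=\partial_\nu u|_\Gamma=g_1\in L^2(\Sigma)$ — and that it annihilates the whole non-regularizing part of the functionals, since $\partial_t u-\Delta u=0$, $\nabla u-\nabla u=0$ and, in the relaxed case, $u|_\Sigma=g_0$, $(\nabla u)\cdot\nu|_\Sigma=g_1$; using $\Delta u=\partial_t u$ this gives the common value $2\eps\|u\|^2$ at $(u,\nabla u)$. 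Comparing with the minimizer yields at once that the residuals $\|\partial_t u_\eps-{\rm div}\,\pp_\eps\|_{L^2(Q)}$, $\|\nabla u_\eps-\pp_\eps\|_{L^2(Q)}$ — and, in the relaxed case, $\|v_\eps-g_0\|_{L^2(\Sigma)}$, $\|\qq_\eps\cdot\nu-g_1\|_{L^2(\Sigma)}$ — are $O(\sqrt\eps)$, together with $\|u_\eps\|^2+\|\pp_\eps\|_{\rm div}^2\le2\|u\|^2$ (resp.\ $\|v_\eps\|^2+\|\qq_\eps\|_{\rm div}^2\le2\|u\|^2$). Hence $u_\eps$ (resp.\ $v_\eps$) stays bounded in $H^1(Q)$ and $\pp_\eps$ (resp.\ $\qq_\eps$) in $L^2(0,T;H_{{\rm div},\Omega})$; in the relaxed case the Neumann-residual bound also gives boundedness in $W$.

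Then I would extract weakly convergent subsequences, $u_\eps\rightharpoonup w$ in $H^1(Q)$ and $\pp_\eps\rightharpoonup\rr$ in $L^2(0,T;H_{{\rm div},\Omega})$ (in the relaxed case $\qq_\eps\rightharpoonup\rr$ in $W$); the constraint sets being convex and closed, hence weakly closed, $w$ and $\rr$ belong to them. Since the residuals go to $0$ strongly in $L^2$ while the components converge weakly, their weak limits $\partial_t w-{\rm div}\,\rr$ and $\nabla w-\rr$ vanish, so $\rr=\nabla w$ and $\partial_t w-\Delta w=0$ in $Q$; in the relaxed case the vanishing boundary residuals and the weak continuity of the trace maps $H^1(Q)\to L^2(\Sigma)$ and $W\ni\qq\mapsto\qq\cdot\nu|_\Sigma\in L^2(\Sigma)$ give $w|_\Sigma=g_0$, $(\nabla w)\cdot\nu|_\Sigma=g_1$, while $w|_{S_0}=0$ because $w\in V$. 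Thus $w$ solves problem (\ref{lateral_initial}); by the uniqueness of that solution (Holmgren's theorem, as recalled above), $w=u$ and $\rr=\nabla u$, and a standard contradiction argument upgrades this to convergence of the whole family.

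Finally, to promote weak to strong convergence — the only delicate point, since the energy bound controls only the sum $\|u_\eps\|^2+\|\pp_\eps\|_{\rm div}^2$ — I would combine $\limsup_\eps\big(\|u_\eps\|^2+\|\pp_\eps\|_{\rm div}^2\big)\le2\|u\|^2=\|u\|^2+\|\nabla u\|_{\rm div}^2$ with the two separate weak lower-semicontinuity bounds $\liminf\|u_\eps\|^2\ge\|u\|^2$ and $\liminf\|\pp_\eps\|_{\rm div}^2\ge\|\nabla u\|_{\rm div}^2$, which forces $\|u_\eps\|\to\|u\|$ and $\|\pp_\eps\|_{\rm div}\to\|\nabla u\|_{\rm div}$ separately; expanding $\|u_\eps-u\|^2$ and $\|\pp_\eps-\nabla u\|_{\rm div}^2$ and using the weak convergence (together with Poincaré, since $u_\eps-u$ vanishes on $S_0$) then gives strong convergence of $u_\eps$ (resp.\ $v_\eps$) in $H^1(Q)$ and of $\pp_\eps$ (resp.\ $\qq_\eps$) in $L^2(0,T;H_{{\rm div},\Omega})$. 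The same reasoning applies verbatim to $(v_\eps,\qq_\eps)$. The main obstacle I foresee is precisely this splitting of the norm convergence between the two components; apart from that, the only extra thing demanding care is, for the relaxed formulation, verifying the admissibility $\nabla u\in W$ and that the weak limit still attains the Cauchy data through the vanishing boundary residuals.
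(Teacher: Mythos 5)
Your proposal is correct and follows essentially the same route as the paper: coercivity of the symmetric bilinear forms with constant $\min(\eps,1)$ plus Lax--Milgram for well-posedness, an energy bound obtained by comparing with the exact pair $(u,\nabla u)$, weak compactness, identification of the weak limit through the vanishing residuals (and, in the relaxed case, the vanishing boundary residuals) together with uniqueness for problem (\ref{lateral_initial}), and the upgrade to strong convergence by combining the $\limsup$ bound on the sum of the squared norms with weak lower semicontinuity of each norm separately. The only cosmetic difference is that you derive the energy inequality from the minimization characterization $J_\eps(u_\eps,\pp_\eps)\le J_\eps(u,\nabla u)=2\eps\,||u||^2$, whereas the paper subtracts the $\eps=0$ weak formulation satisfied by $(u,\nabla u)$ from the regularized one and tests with the difference, obtaining the marginally sharper inequality $((v_\eps,v_\eps-u))+((\qq_\eps,\qq_\eps-\pp))_{\rm div}\le 0$; both yield the same conclusions.
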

\begin{proof}
We prove the result in the case of formulation (\ref{qr_lateral_initial_var2}) for $(g_0,g_1) \in L^2(0,T;L^2(\Gamma)) \times L^2(0,T;L^2(\Gamma))$. The proof in the other case is almost the same.
Such formulation is equivalent to find $(v_\eps,\qq_\eps) \in V \times W$ such that for all $(v,\qq) \in V \times W$
\be B((v_{\eps},\qq_{\eps}),(v,\qq))=M(v,\qq), \label{b} \ee
where the bilinear form $B$ and the linear form $M$ are defined by
\[
\begin{array}{l}
\displaystyle B((u,\pp),(v,\qq))=\int_Q \left(\partial_t u-{\rm div}\, \pp)(\partial_t v -{\rm div}\,\qq)+ (\nabla u - \pp)\cdot (\nabla v- \qq)\right)dxdt \\
\displaystyle + \int_\Sigma u\, v\,dsdt
+ \int_\Sigma (\pp \cdot \nu) (\qq \cdot \nu)\,dsdt \\
\displaystyle
+ \eps \int_Q \left(\partial_t u\,\partial_t v + \nabla u \cdot \nabla v\right)dxdt + \eps \int_Q \left(\pp\cdot \qq + ({\rm div}\, \pp)({\rm div}\, \qq)\right)dxdt
\end{array}
\]
and 
\[M(v,\qq)=\int_\Sigma g_0\,v\,dsdt + \int_\Sigma g_1 (\qq \cdot \nu)\,dsdt.\]
We have
\be B((u,\pp),(u,\pp)) \geq {\rm min}(\eps,1) \left(||u||^2 + ||p||^2_{{\rm div},\Sigma}\right),\label{coerc}\ee
which proves the coercivity of $B$, so that well-posedness is a consequence of Lax-Milgram's lemma. Now let us prove the convergence result.
By using (\ref{lateral_initial_var}) we obtain that the exact solution satisfies $(u,\pp:=\nabla u) \in V \times W$ and for all $(v,\qq) \in V \times W$,
\be B_0((u,\pp),(v,\qq))=M(v,\qq), \label{b0}\ee
where $B_0$ coincides with $B$ for $\eps=0$.
We hence obtain, by subtracting (\ref{b0}) to (\ref{b}) and choosing $(v,\qq)=(v_\eps-u,\qq_\eps-\pp)$,
\be
\begin{array}{l}
\displaystyle \int_Q |\partial_t (v_\eps-u)-{\rm div}(\qq_\eps-\pp)|^2 dxdt+ \int_Q |\nabla (v_\eps-u) - (\qq_\eps-\pp)|^2dxdt \\
\displaystyle + \int_\Sigma (v_\eps-u)^2\,dsdt
+ \int_\Sigma ((\qq_\eps-\pp) \cdot \nu)^2\,dsdt 
+\eps\, ((v_\eps,v_\eps-u)) + \eps\, ((\qq_\eps,\qq_\eps-\pp))_{\rm div}=0.
\end{array}
\label{identity}
\ee
This identity implies that 
\be ((v_\eps,v_\eps-u)) + ((\qq_\eps,\qq_\eps-\pp))_{\rm div} \leq 0, \label{signe}\ee
so that $v_\eps$ is bounded in $V$ and $\qq_\eps$ is bounded in $L^2(0,T;H_{{\rm div},\Omega})$. Then there exists subsequences, still denoted $v_\eps$ and $\qq_\eps$, such that
$v_\eps \rightharpoonup w$ in $V$ and $\qq_\eps \rightharpoonup \rr$ in $L^2(0,T;H_{{\rm div},\Omega})$.
As a consequence, we have
$\partial_t v_\eps-{\rm div}\,\qq_\eps \rightharpoonup \partial_t w-{\rm div}\, \rr$ in $L^2(Q)$, $\nabla v_\eps-\qq_\eps \rightharpoonup \nabla w -\rr$ in $(L^2(Q))^d$, $v_\eps|_\Sigma \rightharpoonup w|_\Sigma$ in $L^2(0,T;H^{1/2}(\Gamma))$ and $\qq_\eps\cdot \nu|_\Sigma \rightharpoonup \rr \cdot \nu|_\Sigma$ in $L^2(0,T;H^{-1/2}(\Gamma))$.\\
The identity (\ref{identity}) also implies that $\partial_t v_\eps-{\rm div}\,\qq_\eps \rightarrow \partial_t u-{\rm div}\, \pp$ in $L^2(Q)$, $\nabla v_\eps-\qq_\eps \rightarrow \nabla u -\pp$ in $(L^2(Q))^d$, $v_\eps|_\Sigma \rightarrow u|_\Sigma$ in $L^2(\Sigma)$ and $\qq_\eps\cdot \nu|_\Sigma \rightarrow \pp \cdot \nu|_\Sigma$ in $L^2(\Sigma)$.
We conclude that $\rr \cdot \nu|_\Sigma \in L^2(\Sigma)$, and that $(w,\rr) \in V \times W$ satisfies problem (\ref{lateral_initial_var}).
By uniqueness we have $(w,\rr)=(u,\pp)$, so that 
$v_\eps \rightharpoonup u$ in $V$ and $\qq_\eps \rightharpoonup \pp$ in $L^2(0,T;H_{{\rm div},\Omega})$.
Strong convergence of $(v_\eps,\qq_\eps)$ to $(u,\pp)$ in $V \times L^2(0,T;H_{{\rm div},\Omega})$ is again a consequence of weak convergence from (\ref{signe}), and we complete the proof
as in the proof of theorem \ref{wellposed}.
\end{proof}
\begin{remark} On could regret that in the relaxed formulation (\ref{qr_lateral_initial_var2}) the Neumann data has to be in $L^2(0,T;L^2(\Gamma))$ instead of $L^2(0,T;H^{-1/2}(\Gamma))$.
As done in \cite{darde} in the elliptic case, a possible strategy to cope with the less regular case $g_1 \in L^2(0,T;H^{-1/2}(\Gamma))$ in (\ref{lateral_initial}) is to introduce a lifting of the Neumann data, for example by solving the forward problem 
 \begin{equation}
\label{U}
\left\{
\begin{array}{cccc}
& \partial_t U -\Delta U = 0&  \text{in} &\Omega \times (0,T)\\
& \partial_\nu U=\tilde{g}_1  & \text{on} & \partial \Omega \times (0,T)\\
& U=0 & \text{on} &\Omega \times \{0\},
\end{array}
\right.
\end{equation}
where $\tilde{g}_1 \in L^2(0,T;H^{-1/2}(\partial \Omega))$ is an extension of $g_1$ to $\partial \Omega \times (0,T)$. From
Theorem X.9 in \cite{brezis}, we obtain that problem (\ref{U}) is well-posed in $L^2(0,T;H^1(\Omega)) \cap C^0([0,T];L^2(\Omega))$.
Then the function $\hat{u}=u-U$, where $u$ and $U$ satisfy (\ref{lateral_initial}) and (\ref{U}), respectively, satisfies
\[
\left\{
\begin{array}{cccc}
& \partial_t \hat{u} -\Delta \hat{u}  = 0&  \text{in} &\Omega \times (0,T)\\
& \hat{u}=g_0-U  & \text{on} & \Gamma \times (0,T)\\
& \partial_\nu \hat{u}= 0& \text{on} & \Gamma \times (0,T)\\
& \hat{u}=0 & \text{on} &\Omega \times \{0\},
\end{array}
\right.
\]
with $g_0-U|_{\Gamma \times (0,T)} \in L^2(0,T;H^{1/2}(\Gamma)) \subset L^2(0,T;L^2(\Gamma))$.
We are now in a position to apply the relaxed formulation (\ref{qr_lateral_initial_var2}).
\end{remark}
\subsubsection{An iterated formulation}
Following the idea of \cite{darde} a refinement of the relaxed formulation (\ref{qr_lateral_initial_var2}) consists in iterating it. For  some $\eps>0$
and $(g_0,g_1) \in L^2(0,T;L^2(\Gamma)) \times L^2(0,T;L^2(\Gamma))$, we set $(v^{-1}_\eps,\qq^{-1}_\eps)=(0,0)$ and for all $M \in \mathbb{N}$, we define $(v^{M}_\eps,\qq^{M}_\eps) \in V \times W$ such that for all $(v,\qq) \in V \times W$
\be 
\left\{
\begin{array}{l}
\displaystyle \int_Q \left((\partial_t v^M_\eps-{\rm div}\, \qq^M_\eps)\partial_t v + (\nabla v^M_\eps - \qq^M_\eps)\cdot \nabla v\right)dxdt + \int_\Sigma v^M_\eps\, v\,dsdt\\
\displaystyle
+ \eps \int_Q \left(\partial_t v^M_\eps\,\partial_t v + \nabla v^M_\eps \cdot \nabla v\right)dxdt\\
\displaystyle =  \int_\Sigma g_0\,v\,dsdt + \eps \int_Q \left(\partial_t v^{M-1}_\eps\,\partial_t v + \nabla v^{M-1}_\eps \cdot \nabla v\right)dxdt,
\\
\displaystyle
\int_Q \left(({\rm div}\, \qq^M_\eps-\partial_t v^M_\eps){\rm div}\,\qq + (\qq^M_\eps-\nabla v^M_\eps)\cdot \qq\right)dxdt + \int_\Sigma (\qq^M_\eps\cdot \nu) (\qq \cdot \nu)\,dsdt \\
\displaystyle +\eps \int_Q \left(\qq^M_\eps \cdot \qq + ({\rm div}\, \qq^M_\eps)({\rm div}\, \qq)\right)dxdt\\
\displaystyle =\int_\Sigma g_1 (\qq \cdot \nu)\,dsdt + \eps \int_Q \left(\qq^{M-1}_\eps \cdot \qq + ({\rm div}\, \qq^{M-1}_\eps)({\rm div}\, \qq)\right)dxdt. 
\end{array}\right.
\label{qr_lateral_initial_it}
\ee
From \cite{darde} (see the abstract theorem 4.2), we have the following theorem, which encourages us to simultaneously choose $\eps$ small and $M$ large.
\begin{theorem}
\label{wellposed_it}
For any 
$(g_0,g_1) \in L^2(0,T;L^2(\Gamma)) \times L^2(0,T;L^2(\Gamma))$ and any $\eps>0$ and $M \in \mathbb{N}$, the problem (\ref{qr_lateral_initial_it}) possesses a unique solution $(v^M_\eps,\qq^M_\eps)$ in $W \times V$.
Furthermore,
if there exists a (unique) solution $u \in H^1(Q)$ to problem (\ref{lateral_initial}) associated with data $(g_0,g_1)$,
then the solution $(v^M_{\eps},\qq^M_{\eps})$ to problem (\ref{qr_lateral_initial_it}) associated with data $(g_0,g_1)$ satisfies for all fixed $M \in \mathbb{N}$
\[\lim_{\eps \rightarrow 0} v^M_\eps=u \quad {\rm in}\quad H^1(Q),\quad 
\lim_{\eps \rightarrow 0} \qq^M_\eps=\nabla u \quad {\rm in} \quad L^2(0,T;H_{{\rm div},\Omega})\]
and for all fixed $\eps>0$
\[\lim_{M \rightarrow + \infty} v^M_\eps=u \quad {\rm in}\quad H^1(Q),\quad 
\lim_{M \rightarrow +\infty} \qq^M_\eps=\nabla u \quad {\rm in} \quad L^2(0,T;H_{{\rm div},\Omega}).\]
\end{theorem}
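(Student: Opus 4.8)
I would prove the three assertions separately: well-posedness by the Lax--Milgram lemma combined with an induction on $M$; the limit $\eps\to0$ at fixed $M$ by a second induction whose base case is Theorem \ref{wellposed_var}; and the limit $M\to+\infty$ at fixed $\eps$ by recasting (\ref{qr_lateral_initial_it}) as the abstract iterated regularization scheme of \cite{darde} and invoking its Theorem 4.2.

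\textbf{Well-posedness.} Fix $\eps>0$ and $M\in\mathbb{N}$, and suppose $(v^{M-1}_\eps,\qq^{M-1}_\eps)\in V\times W$ is already known (with $(v^{-1}_\eps,\qq^{-1}_\eps)=(0,0)$). Adding the two equations of (\ref{qr_lateral_initial_it}) tested against $(v,\qq)$, the left-hand side is exactly $B((v^M_\eps,\qq^M_\eps),(v,\qq))$ with $B$ the bilinear form of (\ref{b}), which is continuous and coercive on $V\times W$ by (\ref{coerc}); the right-hand side is the linear form $M(v,\qq)$ of the proof of Theorem \ref{wellposed_var} plus the $\eps$-terms evaluated at the \emph{fixed} pair $(v^{M-1}_\eps,\qq^{M-1}_\eps)$, hence a continuous linear form on $V\times W$. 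The Lax--Milgram lemma then yields a unique $(v^M_\eps,\qq^M_\eps)\in V\times W$, and existence and uniqueness for every $M$ follow by induction.

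\textbf{Limit $\eps\to0$, $M$ fixed.} Write $B=B_0+\eps\,C$, where $B_0$ is the $\eps=0$ part of $B$ and $C((u,\pp),(v,\qq))=((u,v))+((\pp,\qq))_{\rm div}$ collects the terms multiplied by $\eps$. If $u\in H^1(Q)$ solves (\ref{lateral_initial}), then $(u,\nabla u)\in V\times W$ and $B_0((u,\nabla u),\cdot)=M(\cdot)$, which is identity (\ref{b0}). Subtracting (\ref{b0}) from the combined form of (\ref{qr_lateral_initial_it}) gives, for all $(v,\qq)\in V\times W$,
\[ B\big((v^M_\eps,\qq^M_\eps)-(u,\nabla u),(v,\qq)\big)=\eps\,C\big((v^{M-1}_\eps,\qq^{M-1}_\eps)-(u,\nabla u),(v,\qq)\big). \]
Testing with $(v,\qq)=(v^M_\eps,\qq^M_\eps)-(u,\nabla u)$, bounding $B$ below by $\eps\,\|\cdot\|^2_{V\times W}$ for $\eps\le1$ using (\ref{coerc}) and $C$ above by Cauchy--Schwarz, I obtain that the $V\times W$-norm of the error at step $M$ is at most that at step $M-1$. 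For $M=0$ problem (\ref{qr_lateral_initial_it}) reduces to (\ref{qr_lateral_initial_var2}), whose solution converges to $(u,\nabla u)$ by Theorem \ref{wellposed_var}, so an induction on $M$ gives $v^M_\eps\to u$ in $H^1(Q)$ and $\qq^M_\eps\to\nabla u$ in $L^2(0,T;H_{{\rm div},\Omega})$.

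\textbf{Limit $M\to+\infty$, $\eps$ fixed, and the main obstacle.} Set $\mathcal H=V\times W$ with the Hilbert norm $\|(v,\qq)\|_{\mathcal H}^2=\|v\|^2+\|\qq\|_{{\rm div},\Sigma}^2$, and introduce the bounded, self-adjoint, nonnegative operators $\mathcal A,\mathcal R:\mathcal H\to\mathcal H$ defined by $(\mathcal A z,z')_{\mathcal H}=B_0(z,z')$ and $(\mathcal R z,z')_{\mathcal H}=C(z,z')$, together with $\ell\in\mathcal H$ representing $M$. Then (\ref{qr_lateral_initial_it}) reads $(\mathcal A+\eps\mathcal R)z^M=\ell+\eps\mathcal R z^{M-1}$, $z^{-1}=0$, i.e.
\[ z^M=z^{M-1}+(\mathcal A+\eps\mathcal R)^{-1}(\ell-\mathcal A z^{M-1}), \]
an iterated Lavrentiev-type regularization of $\mathcal A z=\ell$. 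Two structural facts let the abstract theory apply: first, $\ell=\mathcal A(u,\nabla u)$ by (\ref{b0}), so the limit equation is consistent; second, $\ker\mathcal A=\{z\in\mathcal H:B_0(z,z)=0\}=\{0\}$, since $B_0(z,z)=0$ forces $z$ to solve the homogeneous version of (\ref{lateral_initial_var}), hence $z=0$ by Holmgren's theorem exactly as in the proof of Theorem \ref{wellposed_var} (which also shows $\mathcal A z=\ell$ has the unique solution $(u,\nabla u)$). Theorem 4.2 of \cite{darde} then gives $z^M\to(u,\nabla u)$ in $\mathcal H$, i.e. the stated convergences. I expect this last step to be the only delicate one: its core is the identity $z^M-(u,\nabla u)=\big[\eps(\mathcal A+\eps\mathcal R)^{-1}\mathcal R\big]^{M}\big(z^0-(u,\nabla u)\big)$, where the propagator $G:=\eps(\mathcal A+\eps\mathcal R)^{-1}\mathcal R$ is a self-adjoint contraction for the energy inner product $B(\cdot,\cdot)$ and has no eigenvalue $1$ precisely because $\ker\mathcal A=\{0\}$, so that the spectral theorem forces $G^M\to0$ strongly. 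The point requiring care — and the reason I rely on \cite{darde} rather than redo it in detail — is that here $\mathcal R$ is neither the identity nor coercive on $\mathcal H$ (it omits the $\int_\Sigma(\qq\cdot\nu)^2$ contribution) and $\mathcal A$ comes from a mixed, saddle-point-type form rather than from a normal operator $T^\ast T$; one has to check that the attainability hypothesis $u\in H^1(Q)$ genuinely plays the role of the usual range/source condition in this degenerate framework. Everything else is the routine Lax--Milgram-plus-induction argument above.
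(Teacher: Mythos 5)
Your proposal is correct, but it is worth knowing that the paper itself gives \emph{no} proof of this theorem: it simply states that the result follows from the abstract Theorem 4.2 of \cite{darde}, which is exactly the iterated-Tikhonov/Lavrentiev framework you reconstruct in your last paragraph. So for the genuinely delicate part (the limit $M\to+\infty$ at fixed $\eps$) you and the paper take the same route, namely delegation to \cite{darde}; the difference is that you additionally supply the two easier arguments the paper leaves implicit. Those arguments are sound: adding the two equations of (\ref{qr_lateral_initial_it}) does produce $B(z^M,\cdot)=M(\cdot)+\eps\,C(z^{M-1},\cdot)$ with $B$ coercive by (\ref{coerc}), so Lax--Milgram plus induction gives well-posedness; and subtracting (\ref{b0}) yields $B(e^M,\cdot)=\eps\,C(e^{M-1},\cdot)$ with $e^M=z^M-(u,\nabla u)$, whence $\eps\|e^M\|^2_{V\times W}\le B(e^M,e^M)=\eps\,C(e^{M-1},e^M)\le\eps\|e^{M-1}\|_{V\times W}\|e^M\|_{V\times W}$ for $\eps\le1$, giving the monotonicity $\|e^M\|\le\|e^{M-1}\|\le\dots\le\|e^0\|$, and $\|e^0\|\to0$ by Theorem \ref{wellposed_var} since $M=0$ of (\ref{qr_lateral_initial_it}) is exactly (\ref{qr_lateral_initial_var2}). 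Your structural analysis of the propagator $G=\eps(\mathcal A+\eps\mathcal R)^{-1}\mathcal R$ is also the right one: $G$ is $B$-self-adjoint with $0\le G\le I$ because $B(Gx,x)=\eps\,C(x,x)\ge0$ and $B((I-G)x,x)=B_0(x,x)\ge0$, and $\ker(I-G)=\{x:B_0(x,x)=0\}=\ker\mathcal A=\{0\}$ by Holmgren, so the spectral theorem gives $G^M\to0$ strongly; your worry that $\mathcal R$ is not coercive is immaterial here, since only $0\le\eps\mathcal R\le\mathcal A+\eps\mathcal R$ and the consistency $\ell=\mathcal A(u,\nabla u)$ (i.e.\ attainability $u\in H^1(Q)$) are used. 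The only cosmetic issue is the clash between $M$ as iteration index and $M$ as the linear form of (\ref{b}).
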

\begin{remark}
As emphasized in \cite{darde}, an advantage of the iterated formulation is that the convergence with respect to $M$ is achieved for any $\eps>0$. Hence it enables us
to choose $\eps$ not too small in problem (\ref{qr_lateral_initial_it}), which as a result will be not too ill-posed.
\end{remark}
\section{The ``exterior approach"}
Let us consider, for some data $(g_0,g_1)$ on $\Gamma \times (0,T)$, an obstacle $O$ and an associated function $u \in L^2(0,T;H^1(\Omega))$ which satisfy problem
(\ref{obstacle}). Our aim is, following the idea first introduced in \cite{bourgeois_darde1}, to define with the help of $u$ a decreasing sequence of open domains $O_n$ 
which converge in the sense of Hausdorff distance (for open domains) to the actual obstacle $O$.
The notion of Hausdorff distance for open domains is for example defined in \cite{henrot_pierre}.
In practice, of course, the true solution $u$ cannot be used to identify our obstacle $O$ since it cannot be computed from our data $(g_0,g_1)$.
However, as it was seen in the previous section, the true solution $u$ can be approximated with the help of some quasi-reversibility formulation.
This is the basic idea of the ``exterior approach". 

Let us consider a function $V \in H^1(D)$ such that
\be
\left\{
\begin{array}{rcll}
\displaystyle V & = & \displaystyle \sqrt{\int_0^T (u(\cdot,t))^2\,dt}  & \displaystyle \mbox{ \rm{in} }\Omega\\
\displaystyle V & \displaystyle \leq & 0 & \displaystyle \mbox{ \rm{in} } O.
\end{array}\right.
\label{V}
\ee
Let us verify that such a function exists.
\begin{lemma}
The function $V$ which satisfies the first equality of (\ref{V}) and $V=0$ in $O$ belongs to $H^1(D)$.
\end{lemma}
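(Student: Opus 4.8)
The plan is to reduce the statement to a pure time-averaging estimate on the \emph{fixed} domain $D$, and then to establish that estimate by a standard regularization of the square root. Write $w(x):=\bigl(\int_0^T u(x,t)^2\,dt\bigr)^{1/2}$ for $x\in\Omega$, so that $V=w$ on $\Omega$ and $V=0$ on $O$. First I would observe that, since $u\in L^2(0,T;H^1(\Omega))$ and $u=0$ on $\partial O\times(0,T)$, composing the bounded trace operator $H^1(\Omega)\to L^2(\partial O)$ with $t\mapsto u(\cdot,t)$ shows that $u(\cdot,t)$ has vanishing trace on $\partial O$ for a.e.\ $t$. Because $\partial O$ is a Lipschitz hypersurface separating $\Omega$ from $O$ inside the open set $D$ (recall $\overline O\subset D$), the extension of $u(\cdot,t)$ by zero to $O$, call it $\bar u(\cdot,t)$, lies in $H^1(D)$ with $\|\bar u(\cdot,t)\|_{H^1(D)}=\|u(\cdot,t)\|_{H^1(\Omega)}$; hence $\bar u\in L^2(0,T;H^1(D))$. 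Setting $\bar w(x):=\bigl(\int_0^T\bar u(x,t)^2\,dt\bigr)^{1/2}$ on $D$, one has $\bar w=w=V$ on $\Omega$ and $\bar w=0=V$ on $O$, so $\bar w=V$ a.e.\ in $D$. It therefore suffices to prove the following: \emph{if $G$ is a bounded Lipschitz domain and $v\in L^2(0,T;H^1(G))$, then $\bigl(\int_0^T v(\cdot,t)^2\,dt\bigr)^{1/2}\in H^1(G)$}; the lemma then follows by applying this with $G=D$ and $v=\bar u$.

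For the reduced claim, $w\in L^2(G)$ is immediate from Tonelli, since $\int_G w^2=\int_{G\times(0,T)}v^2<\infty$. For the gradient I would regularize: for $\delta>0$ set $w_\delta:=\bigl(\delta+\int_0^T v(\cdot,t)^2\,dt\bigr)^{1/2}$. Approximating $v$ by smooth functions (dense in $L^2(0,T;H^1(G))$) and passing to the limit, using that $s\mapsto\sqrt{\delta+s}$ is $C^1$ and globally Lipschitz, gives $w_\delta\in H^1(G)$ with $\nabla w_\delta=w_\delta^{-1}\int_0^T v\,\nabla_x v\,dt$. Cauchy--Schwarz in $t$ together with $\int_0^T v^2\,dt\le w_\delta^2$ yields the crucial pointwise bound $|\nabla w_\delta|\le\bigl(\int_0^T|\nabla_x v|^2\,dt\bigr)^{1/2}$, whence $\|\nabla w_\delta\|_{L^2(G)}\le\|\nabla_x v\|_{L^2(G\times(0,T))}$ \emph{uniformly} in $\delta$. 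Since $0\le w_\delta-w\le\sqrt\delta$, we get $w_\delta\to w$ in $L^2(G)$; the uniform bound gives a weak limit of $(\nabla w_\delta)$ in $L^2(G)^d$ along a subsequence, and passing to the limit in $\int_G w_\delta\,\partial_i\varphi=-\int_G(\partial_iw_\delta)\varphi$ for $\varphi\in C_c^\infty(G)$ identifies that limit as $\nabla w$. Hence $w\in H^1(G)$, and the choice $G=D$, $v=\bar u$ gives $V=\bar w\in H^1(D)$.

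The only delicate point is the regularization step: because $v$ is merely in $L^2(0,T;H^1(G))$ one cannot differentiate under the time integral naively, and the square root fails to be Lipschitz near $0$ — both obstructions are precisely what the parameter $\delta$ (which keeps the argument of the square root bounded away from $0$) and the subsequent weak-compactness passage to the limit are designed to circumvent. By contrast, the reduction to $D$ (gluing the time slices across $\partial O$) and the $L^2$ bound are routine.
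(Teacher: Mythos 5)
Your proof is correct, and its engine is the same Cauchy--Schwarz estimate $|\nabla w|\le\bigl(\int_0^T|\nabla_x u|^2\,dt\bigr)^{1/2}$ that the paper uses; the differences are in how the two technical obstacles are handled, and on both counts your version is the more careful one. The paper glues across $\partial O$ at the level of $V$ itself (asserting that $V$ is ``continuous across $\partial O$'' because $u$ vanishes there) and then differentiates the square root formally, writing $(\partial_{x_i}V)^2=\bigl(\int_0^T u^2\,dt\bigr)^{-1}\bigl(\int_0^T u\,\partial_{x_i}u\,dt\bigr)^2$ without addressing the set where $\int_0^T u^2\,dt$ vanishes or justifying the chain rule and differentiation under the time integral for a function that is merely $L^2(0,T;H^1(\Omega))$. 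You instead glue at the level of the time slices --- extending $u(\cdot,t)$ by zero across the Lipschitz interface $\partial O$, which is the standard matching-traces fact --- and you tame the square root with the regularization $w_\delta=\bigl(\delta+\int_0^T v^2\,dt\bigr)^{1/2}$, for which the gradient bound is uniform in $\delta$, concluding by weak compactness. This buys rigor exactly where the paper's computation is heuristic (the degeneracy of $\sqrt{\cdot}$ at $0$ and the Sobolev chain rule), at the cost of length; the final bound and conclusion are identical. When you flesh out the smooth-approximation step, do add a line confirming that $t\mapsto\bar u(\cdot,t)$ remains strongly measurable into $H^1(D)$ after the zero extension (it does, since the extension is a bounded linear operator applied to a measurable vector-valued function) and that the formula $\nabla w_\delta=w_\delta^{-1}\int_0^T v\,\nabla_x v\,dt$ passes to the $L^2(0,T;H^1(G))$ limit of the approximants; both are routine but are the only places where your sketch leaves something unsaid.
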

\begin{proof}
First of all, since $u=0$ on $\partial O \times (0,T)$, we obtain that $V$ is continuous across the boundary $\partial O$ of $O$.
As a consequence, it suffices to prove that $V|_\Omega \in H^1(\Omega)$. It is readily seen that $V|_\Omega \in L^2(\Omega)$.
Now, for $i=1,...,d$ and $x \in \Omega$,
\[\displaystyle \left(\deri{V}{x_i}(x)\right)^2= \frac{1}{\int_0^T (u(x,t))^2\,dt} \left(\int_0^T u(x,t)\deri{u}{x_i}(x,t)\,dt\right)^2 \leq \int_0^T \left(\deri{u}{x_i}(x,t)\right)^2\,dt.\]
That $u \in L^2(0,T;H^1(\Omega))$ implies that $\partial V/\partial x_i \in L^2(\Omega)$ for all $i$, which completes the proof.
\end{proof}

Let us choose $f \in H^{-1}(D)$ such that in the sense of
$H^{-1}(D)$,
\be f -\Delta V \geq 0\label{source}.\ee
For some open domain $\omega \subset D$ and $g \in H^{-1}(D)$, let us define by $v_{g,\omega}$ the solution $v \in H^1_0(\omega)$ of the Poisson problem $\Delta v=g$.
We now define a sequence of open domains $O_n$ by following induction. 
We first consider 
an open domain $O_0$ such that $O \subset O_0 \Subset D$.
The open domain $O_n$ being given, we define
\be O_{n+1}=O_n \setminus {\rm supp}(\sup(\phi_n,0)),\label{recurrence}\ee
where $\phi_n=V+v_{g,O_n}$ and $g:=f -\Delta V$ (${\rm supp}$ denotes the support of a function). 
Equivalently, if the open domain $O_n$ is Lipschitz smooth, the function $\phi_n$ is defined as the unique solution in $H^1(O_n)$ of the boundary value problem
 \be
\left\{
\begin{array}{ccc}
\Delta \phi_n = f & {\rm in} &O_n\\
 \phi_n=V & {\rm on} & \partial O_n.
\end{array}\right.
\label{poisson}
\ee
Since the open domains $O_n$ form a decreasing sequence, we know from \cite{henrot_pierre} that such sequence converges, in the sense of Hausdorff distance for open domains,
to some open domain $O_\infty$. Besides, the definition of the open sets $O_n$ implies that they all contain the obstacle $O$. Therefore it can be seen from (\ref{poisson}) that $\phi_n$ only depends on the fixed distribution $f$ and on the values of $u$ on the boundary $\partial O_n$. Furthermore $O \subset O_\infty$.
The convergence of the sequence of open domains $O_n$ to the actual obstacle $O$ (in other words $O_\infty=O$) is given by the following theorem, 
provided we assume convergence of the functions $v_{g,O_n}$ with respect to the domain.
\begin{theorem}
\label{main}
Let us consider a Lipschitz domain $O$ and a function $u \in L^2(0,T;H^1(\Omega))$ which satisfy problem (\ref{obstacle}).
Let us choose some $V \in H^1(D)$ and $f \in H^{-1}(D)$ which satisfy (\ref{V}) and (\ref{source}), respectively, and let us denote $g=f -\Delta V$.
Now we choose an open domain $O_0$ such that $O \subset O_0 \Subset D$ and consider the decreasing sequence of open domains $O_n$ defined by (\ref{recurrence}).
Let us denote by $O_\infty$ the limit of the sequence $(O_n)$ in the sense of Hausdorff distance for open domains.\\
If we assume that the sequence of functions $v_{g,O_n}$ converge in $H^1_0(D)$ to the function $v_{g,O_\infty}$, then $O_\infty=O$.
\end{theorem}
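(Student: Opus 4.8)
The plan is to pass to the limit in the recursion (\ref{recurrence}), identify the limiting potential $\phi_\infty=V+v_{g,O_\infty}$, then run a maximum principle in the ``excess'' region $O_\infty\setminus\overline O$ and close with a unique continuation argument. Throughout I use that $(g_0,g_1)\neq 0$, equivalently that $u$ is not identically zero on $\Omega\times(0,T)$ (if $u\equiv 0$ the iteration never moves and the conclusion fails). \emph{Step 1: $O\subset O_\infty$ and $\phi_\infty\leq 0$ in $O_\infty$.} Since $g=f-\Delta V\geq 0$, testing $\Delta v_{g,O_n}=g$ with $(v_{g,O_n})^+\in H^1_0(O_n)$ gives $v_{g,O_n}\leq 0$ in $O_n$, hence $\phi_n=V+v_{g,O_n}\leq 0$ in $O$ (where $V=0$); thus ${\rm supp}(\sup(\phi_n,0))$ is disjoint from $O$, and an induction starting from $O\subset O_0$ yields $O\subset O_n$ for every $n$, so $O\subset O_\infty$. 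As $(O_n)$ is decreasing, standard properties of Hausdorff convergence of open sets give $O_\infty\subset O_{n+1}\subset O_n$ (see \cite{henrot_pierre}); since $O_{n+1}=O_n\setminus{\rm supp}(\sup(\phi_n,0))$, the function $\phi_n$ is $\leq 0$ on a neighbourhood of every point of $O_\infty$, hence $\phi_n\leq 0$ a.e.\ in $O_\infty$. Restricting the assumed convergence $v_{g,O_n}\to v_{g,O_\infty}$ in $H^1_0(D)$ to $O_\infty$ and extracting an a.e.\ convergent subsequence, I let $n\to\infty$ to obtain $\phi_\infty\leq 0$ a.e.\ in $O_\infty$.

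\emph{Step 2: the excess region is $H^1$-negligible.} Assume, for contradiction, that $O_\infty\neq O$. Since $O\subset O_\infty$ and $O$ is Lipschitz, so $O={\rm int}(\overline O)$, one has $O_\infty\not\subset\overline O$, hence $R:=O_\infty\setminus\overline O=O_\infty\cap\Omega$ is a nonempty open set. On $O_\infty$ we have $0\leq V$ (because $V\geq 0$ on $\Omega$ and $V=0$ on $O$) and $V\leq -v_{g,O_\infty}$ (from $\phi_\infty\leq 0$), with $-v_{g,O_\infty}\in H^1_0(O_\infty)$ and $-v_{g,O_\infty}\geq 0$. By the lattice property of $H^1_0$, $V|_{O_\infty}\in H^1_0(O_\infty)$; since in addition $V$ vanishes on the open set $O\cap O_\infty$, the restriction $V|_R$ belongs to $H^1_0(R)$.

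\emph{Step 3: $V\equiv 0$ on $R$ and conclusion.} On $R\subset\Omega$, the function $V=\sqrt{\int_0^T u^2\,dt}$ is subharmonic: where $V>0$, $u$ is smooth and, using $\Delta u=\partial_t u$, $u(\cdot,0)=0$ and the Cauchy--Schwarz inequality (exactly as in the proof that $V\in H^1(D)$),
\[ V\,\Delta V=\int_0^T|\nabla u|^2\,dt+\frac12\,u(\cdot,T)^2-|\nabla V|^2\geq \frac12\,u(\cdot,T)^2\geq 0, \]
so $\Delta V\geq 0$ on $\{V>0\}$, and since $V\geq 0$ this propagates to $\Delta V\geq 0$ in $\mathcal D'(R)$. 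Testing against $V|_R\in H^1_0(R)$ gives $\int_R|\nabla V|^2\,dx\leq 0$, so $V$ is constant on $R$, and being in $H^1_0(R)$, $V\equiv 0$ on $R$, i.e.\ $u=0$ in $R\times(0,T)$. But $u$ solves the heat equation in the connected set $\Omega\times(0,T)$ and vanishes on the nonempty open subset $R\times(0,T)$; Holmgren's theorem (applied across the non-characteristic boundary $\partial R\times(0,T)$ and propagated using the connectedness of $\Omega$) forces $u\equiv 0$ in $\Omega\times(0,T)$, hence $(g_0,g_1)=(u|_\Sigma,\partial_\nu u|_\Sigma)=0$, a contradiction. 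Therefore $R=\emptyset$, i.e.\ $O_\infty\subset\overline O$, and together with $O\subset O_\infty$ and $O$ Lipschitz this gives $O_\infty=O$.

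\emph{Main obstacle.} Two points require care. The first is the bookkeeping with the Hausdorff limit: that $O_\infty\subset O_n$, and that the hypothesis $v_{g,O_n}\to v_{g,O_\infty}$ may be localized to $O_\infty$. The second, more substantial, is the low regularity of $V$: one must justify rigorously that $V|_R\in H^1_0(R)$ from the squeeze $0\leq V\leq -v_{g,O_\infty}$ together with $V|_{O\cap O_\infty}=0$, and that the formal computation of $V\,\Delta V$ above holds in the distributional sense on $R$ (it is enough to work with $\int_0^{T'}u^2\,dt$ for $T'<T$, use interior parabolic regularity, and let $T'\to T$). The final unique continuation step is classical.
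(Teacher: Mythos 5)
Your proof is correct and follows essentially the same route as the paper's (which is omitted there and deferred to Theorem 2.5 of \cite{bourgeois_darde1}): monotonicity $O\subset O_n$ via the maximum principle, passage to the limit to get $\phi_\infty\le 0$ in $O_\infty$, membership of $V$ in $H^1_0$ of the excess region by the squeeze $0\le V\le -v_{g,O_\infty}$, subharmonicity of $V=\sqrt{\smallint_0^T u^2\,dt}$, and unique continuation for the heat equation -- the last two being exactly the points the authors identify as the only changes from the Laplace case. Two small touch-ups: since (\ref{V}) only imposes $V\le 0$ (not $V=0$) in $O$, run the squeeze argument with $V^+$ rather than $V$; and the distributional inequality $\Delta V\ge 0$ across $\{V=0\}$ and the use of the pointwise value $u(\cdot,0)=0$ are most cleanly justified by regularizing with $\sqrt{\smallint u^2\,dt+\eps^2}-\eps$ and a local energy argument near $t=0$, as you indicate.
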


We omit the proof of theorem \ref{main} since it is very close to that of theorem 2.5 in \cite{bourgeois_darde1}. The only difference in the proof concerns the unique continuation argument which
is employed: it was related to the Laplace equation in \cite{bourgeois_darde1} while it is related to the heat equation in the present paper. 
\begin{remark}
In \cite{henrot_pierre} (see also \cite{bourgeois_darde1}), several situations in which the sequence of functions $v_{g,O_n}$ converge in $H^1_0(D)$ 
to the function $v_{g,O_\infty}$ are analyzed.
For $d=2$, it suffices that all the open domains $D \setminus \overline{O_n}$, $n \in \mathbb{N}$, be connected.
For arbitrary $d \geq 2$, it suffices that all the open domain $O_n$, $n \in \mathbb{N}$, be uniformly Lipschitz with respect to $n$.
\end{remark}

The theorem \ref{main} suggests the following ``exterior approach" algorithm:

\vspace{0.2cm}
{\bf Algorithm}

\vspace{0.1cm}
\begin{enumerate}
\item Choose an initial guess $O_0$ such that $O \subset O_0 \Subset D$.
\item Step 1: for a given $O_n$, compute some quasi-reversibility solution $u_n$ in $\Omega_n \times (0,T)$, where  
$\Omega_n:=D \setminus \overline{O_n}$.
\item Step 2: for a given $u_n$ in $\Omega_n \times (0,T)$, compute $V_n(x)=||u_n(x,\cdot)||_{L^2(0,T)}$ in $\Omega_n$ and the solution $\phi_n$ in $O_n$ of the Poisson problem 
\be 
\left\{
\begin{array}{ccc}
\Delta \phi_n = f & {\rm in} &O_n\\
 \phi_n=V_n & {\rm on} & \partial O_n
\end{array}\right.
\label{poisson_n}
\ee
for sufficiently large $f$.
Compute $O_{n+1}=\{x \in O_n,\,\,\phi_n(x)<0\}$.
\item Go back to step 1 until some stopping criterion is satisfied.
\end{enumerate}
\section{Some numerical applications}
\subsection{A tensorized finite element method}
\label{sec_tensor}
Our numerical experiments will be based on the iterated mixed formulation (\ref{qr_lateral_initial_it}).
However the discretization is presented for $M=0$ for sake of simplicity, which corresponds to formulation (\ref{qr_lateral_initial_var2}). 
In this section we restrict ourselves to a polygonal domain $\Omega$ in $\mathbb{R}^2$.
We consider a family of triangulations $T_h$ of the domain $\overline{\Omega}$ such that the diameter of each triangle $K \in T_h$ is bounded by $h>0$ and such that $T_h$ is regular in the sense of \cite{ciarlet}. We also introduce a subdivision $I_h$ of the domain $[0,T]$ such that the size of each interval $L \in I_h$ is also bounded by $h$.
We assume that $\overline{\Gamma}$ is formed by the union of edges of some triangles of $T_h$.
As can be seen in the algorithm of the ``exterior approach" above, we have to compute several integrals of the solutions of the quasi-reversibility method over $t \in [0,T]$.
For practical reasons we are hence tempted to use some tensorized finite elements to discretize the spaces $V$ and $W$. More precisely,
the discretized space $V_h \subset V$ in the domain $\Omega \times (0,T)$ is the set denoted $V_{h,\Omega} \otimes V_{h,T}$ formed by the linear combination of standard products of all basis functions of the discretized space $V_{h,\Omega}$ in $\Omega$ by all basis functions of the discretized space $V_{h,T}$ in $(0,T)$, where: 
$V_{h,\Omega}$ consists of the standard $P^1$ triangular finite element on $\Omega$ and $V_{h,T}$ 
consists of the $P^1$ finite element on $(0,T)$. Similarly, the discretized space $W_h \subset W$ in the domain $\Omega \times (0,T)$ is the set $W_{h,\Omega} \otimes W_{h,T}$, where: $W_{h,\Omega}$ 
consists of the Raviart-Thomas finite element $RT^0$ on $\Omega$ and $V_{h,T}$ 
consists of the standard $P^0$ finite element on $(0,T)$.
The finite elements $P^k$ and $RT^k$ for $k \in \mathbb{N}$ are for example described in \cite{brezzi_fortin}, 
the finite element $RT^k$ being introduced in \cite{raviart_thomas}. Since the maximal diameter $h$ of the two-dimensional 
triangular mesh of $\Omega$ coincides with that of the subdivision of $(0,T)$ and since both finite elements in space and time provide a linear error estimate
with respect to $h$, we expect that the resulting tensorized finite element will also provide a linear error estimate
with respect to $h$.
For each triangle $K \in T_h$ or interval $L \in I_h$ and $k \in \mathbb{N}$, $P_k(K)$ and $P_k(L)$ denote the space of polynomial functions of degree lower or equal to $k$.
The spaces $V_{h,\Omega}$, $V_{h,T}$, $W_{h,\Omega}$ and $W_{h,T}$ are defined as follows:
\[V_{h,\Omega} = \{f_h \in H^1(\Omega),\,\, f_h|_K \in P_1(K),\,\forall K \in T_h \},\]
\[V_{h,T} = \{\phi_h \in H^1(0,T),\,\, \phi_h|_L \in P_1(L),\,\forall L \in I_h, \quad \phi_h(0)=0\}, \]
\[W_{h,\Omega}= \{\ff_h \in (L^2(\Omega))^2,\,{\rm div}\ff_h \in L^2(\Omega),\,\,  \ff_h \in (P_0(K))^2 + x P_0(K),\,\forall K \in T_h\},\]
\[W_{h,T} = \{\phi_h \in L^2(0,T),\,\,\phi_h|_L \in P_0(L),\,\forall L \in I_h \},\]
where $x \in \mathbb{R}^2$ is the spatial coordinate.
We recall that the trace of some $f_h \in V_{h,\Omega}$ or $\phi_h \in V_{h,T}$ is continuous across the intersection of two elements, 
while for some $\ff_h \in W_{h,\Omega}$, the trace of $\ff_h \cdot \nu$ is continuous across such intersection, where $\nu$ is the corresponding normal vector.

The discretized formulation of (\ref{qr_lateral_initial_var2}) for $\eps,h>0$ is the following: 
for $(g_0,g_1) \in L^2(0,T;L^2(\Gamma)) \times L^2(0,T;L^2(\Gamma))$, find $(v_{\eps,h},\qq_{\eps,h}) \in V_h \times W_h$ such that for all $(v_h,\qq_h) \in V_h \times W_h$
\be 
\left\{
\begin{array}{l}
\displaystyle \int_Q \left((\partial_t v_{\eps,h}-{\rm div}\, \qq_{\eps,h})\partial_t v_h + (\nabla v_{\eps,h} - \qq_{\eps,h})\cdot \nabla v_h\right)dxdt + \int_\Sigma v_{\eps,h}\, v_h\,dsdt\\
\displaystyle
+ \eps \int_Q \left(\partial_t v_{\eps,h}\,\partial_t v_h + \nabla v_{\eps,h} \cdot \nabla v_h\right)dxdt=  \int_\Sigma g_0\,v_h\,dsdt,
\\
\displaystyle
\int_Q \left(({\rm div}\, \qq_{\eps,h}-\partial_t v_{\eps,h}){\rm div}\,\qq_h + (\qq_{\eps,h}-\nabla v_{\eps,h})\cdot \qq_h\right)dxdt + \int_\Sigma (\qq_{\eps,h}\cdot \nu) (\qq_h \cdot \nu)\,dsdt \\
\displaystyle +\eps \int_Q \left(\qq_{\eps,h} \cdot \qq_h + ({\rm div}\, \qq_{\eps,h})({\rm div}\, \qq_h)\right)dxdt=\int_\Sigma g_1 (\qq_h \cdot \nu)\,dsdt. 
\end{array}\right.
\label{qrh_lateral_initial_var2}
\ee
The error estimate due to the discretization, that is the discrepancy between the solution to problem (\ref{qrh_lateral_initial_var2}) and the solution to problem (\ref{qr_lateral_initial_var2}), is given by the following theorem.
\begin{theorem}
\label{th1d}
For all $\eps,h>0$, the problem (\ref{qrh_lateral_initial_var2}) has a unique solution $(v_{\eps,h},\qq_{\eps,h}) \in V_h \times W_h$. Furthermore, if $\eps\leq 1$ and $(v_{\eps},\qq_{\eps})$ belongs to $H^2(Q) \times \{H^1(0,T;(H^1(\Omega))^2) \cap L^2(0,T;(H^2(\Omega))^2)\}$, then
\[||v_{\eps,h}-v_{\eps}||+ ||\qq_{\eps,h}-\qq_{\eps}||_{{\rm div}}
\leq C\,\frac{h}{\sqrt{\eps}}\left(||v_{\eps}||_{H^{2}(Q)} + ||\qq_{\eps}||_{H^1(0,T;(H^1(\Omega))^2) \cap L^2(0,T;(H^{2}(\Omega))^2)} \right),\]
where
$C>0$ is independent of $\eps$ and $h$.
\end{theorem}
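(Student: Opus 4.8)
The plan is to treat (\ref{qrh_lateral_initial_var2}) as a conforming Galerkin discretization of the continuous problem (\ref{qr_lateral_initial_var2}) and to run a C\'ea-type argument, the one subtle point being that the consistency error should be amplified only by $\eps^{-1/2}$. Since $V_h \subset V$ and $W_h \subset W$ — the constraint $\phi_h(0)=0$ in $V_{h,T}$ forces $v_h|_{S_0}=0$, and the normal trace of an $RT^0$ field lies in $L^2(\Gamma)$ — the bilinear form $B$ and the linear form $M$ from the proof of Theorem \ref{wellposed_var} restrict to $V_h \times W_h$, where the coercivity estimate (\ref{coerc}) still holds. As $V_h \times W_h$ is finite dimensional, this already yields existence and uniqueness of $(v_{\eps,h},\qq_{\eps,h})$ for every $\eps,h>0$.

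For the error estimate I would split $B=B_0+\eps\,a$, where $B_0$ is $B$ evaluated at $\eps=0$, a symmetric \emph{nonnegative} form because it is a sum of squares,
\[B_0(z,z)=\int_Q|\partial_t v-{\rm div}\,\qq|^2\,dxdt+\int_Q|\nabla v-\qq|^2\,dxdt+\int_\Sigma v^2\,dsdt+\int_\Sigma(\qq\cdot\nu)^2\,dsdt\]
for $z=(v,\qq)$, while $a(z,z)=||v||^2+||\qq||^2_{\rm div}$ is the scalar product of $V\times L^2(0,T;H_{{\rm div},\Omega})$; write $||z||_*^2=||v||^2+||\qq||^2_{{\rm div},\Sigma}$ for the norm appearing in (\ref{coerc}). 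Setting $e=(v_\eps,\qq_\eps)-(v_{\eps,h},\qq_{\eps,h})$, $\eta=(v_\eps,\qq_\eps)-z_h^I$ and $\xi=z_h^I-(v_{\eps,h},\qq_{\eps,h})\in V_h\times W_h$ for an arbitrary $z_h^I\in V_h\times W_h$, Galerkin orthogonality gives $B(e,\xi)=0$, hence $B(\xi,\xi)=-B(\eta,\xi)$. Applying Cauchy--Schwarz to the nonnegative form $B_0$, and using $B_0(\xi,\xi)\le B(\xi,\xi)$ together with the coercivity bound $\eps||\xi||_*^2\le B(\xi,\xi)$ (valid since $\eps\le1$), one gets
\[B(\xi,\xi)\le\Big(B_0(\eta,\eta)^{1/2}+\sqrt{\eps}\,||\eta||_*\Big)B(\xi,\xi)^{1/2},\]
whence $||\xi||_*\le\eps^{-1/2}B_0(\eta,\eta)^{1/2}+||\eta||_*$, and therefore, with $B_0(z,z)^{1/2}\le C||z||_*$ by the trace theorem and $||e||_*\le||\eta||_*+||\xi||_*$,
\[||v_\eps-v_{\eps,h}||+||\qq_\eps-\qq_{\eps,h}||_{\rm div}\le C||e||_*\le\frac{C}{\sqrt{\eps}}\inf_{z_h^I\in V_h\times W_h}||(v_\eps,\qq_\eps)-z_h^I||_*.\]

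It remains to estimate the best-approximation term. I would take $z_h^I=(\Pi_h v_\eps,\mathcal{R}_h\qq_\eps)$, where $\Pi_h$ is the tensorized $P^1\otimes P^1$ Lagrange interpolant onto $V_h$ (which lies in $V_h$ because $v_\eps|_{S_0}=0$) and $\mathcal{R}_h$ is the tensorized interpolant onto $W_h$, namely the spatial Raviart--Thomas interpolant followed by the $L^2$-projection in time. On the regular product mesh $T_h\otimes I_h$ with common mesh size $h$ in space and time, the classical interpolation theory for $P^1$ and for $RT^0$, combined in a tensor-product fashion, yields under the stated regularity
\[||v_\eps-\Pi_h v_\eps||_{H^1(Q)}\le Ch\,||v_\eps||_{H^2(Q)},\quad||\qq_\eps-\mathcal{R}_h\qq_\eps||_{{\rm div},\Sigma}\le Ch\Big(||\qq_\eps||_{H^1(0,T;(H^1(\Omega))^2)}+||\qq_\eps||_{L^2(0,T;(H^2(\Omega))^2)}\Big),\]
the divergence and the normal-trace contributions being controlled through the commuting-diagram property ${\rm div}\,\mathcal{R}_h\qq=P_h^0\,{\rm div}\,\qq$ and the exactness of the $RT^0$ edge normal moments. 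Plugging these into the abstract estimate above finishes the proof, with $C$ independent of $\eps$ and $h$.

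I expect the bulk of the technical work to be in this last step: assembling the tensor-product interpolation estimates from the one-dimensional-in-time and two-dimensional-in-space ingredients, in particular the $O(h)$ bound on $||(\qq_\eps-\mathcal{R}_h\qq_\eps)\cdot\nu||_{L^2(\Sigma)}$, which is precisely where the $H^1(0,T;(H^1(\Omega))^2)\cap L^2(0,T;(H^2(\Omega))^2)$ regularity of $\qq_\eps$ is genuinely used. The other point to get right is conceptual rather than computational: the improvement from $\eps^{-1}$ to $\eps^{-1/2}$ relies on isolating inside $B$ the merely $\eps$-coercive part as the \emph{symmetric positive} term $\eps\,a$, while the consistency-carrying part $B_0$ stays nonnegative and thus admits its own Cauchy--Schwarz inequality. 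Discrete well-posedness is routine.
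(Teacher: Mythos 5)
Your proposal is correct and follows essentially the same route as the paper: conforming Galerkin discretization, a quasi-optimality bound with the factor $\eps^{-1/2}$ obtained from the symmetry of $B$ together with the coercivity (\ref{coerc}) for $\eps\le 1$, and tensor-product interpolation estimates (prismatic $P^1\otimes P^1$ for $v_\eps$; Raviart--Thomas in space combined with $P^0$ in time for $\qq_\eps$, with a separate $L^2(\Sigma)$ estimate of the normal trace, which is exactly where the assumed regularity of $\qq_\eps$ is used). The only cosmetic difference is that the paper invokes directly the energy-norm best-approximation property of the symmetric Galerkin method, whereas you re-derive the same quasi-optimality via Galerkin orthogonality and Cauchy--Schwarz applied separately to the nonnegative part $B_0$ and the $\eps$-weighted inner-product part.
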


In order to prove theorem \ref{th1d}, we first need to prove the following lemma which specifies the interpolation error provided by the tensor product of two finite elements,
from the knowledge of the interpolation error for each one. 
\begin{lemma}
\label{tensor}
Let us consider two Hilbert spaces $F \subset H$ and a family of discretization subspaces $H_{h,1} \subset H$ depending on $h$ such that for all 
$f \in F$,
\be ||f -\pi_{h,1}\,f||_H \leq c_1\,h ||f||_{F}, \label{F}\ee
where $\pi_{h,1}\,f$ is the orthogonal projection of $f$ onto $H_{h,1}$ in $H$.
We also consider a family of discretization subspaces $H_{h,2} \subset L^2(0,T)$ depending on $h$ such that for all 
$\phi \in H^1(0,T;H)$,
\be ||\phi -\pi_{h,2}\,\phi||_{L^2(0,T;H)} \leq c_2 \,h \left\|\deri{\phi}{t}\right\|_{L^2(0,T;H)},\label{phi}\ee
where $\pi_{h,2}\,\phi$ is the orthogonal projection of $\phi$ onto $H_{h,2}$ in $L^2(0,T)$.

Then for $p \in L^2(0,T;F) \cap H^1(0,T;H)$, we have
\[||p -\pi_{h}\,p||_{L^2(0,T;H)} \leq c\,h ||p||_{L^2(0,T;F) \cap H^1(0,T;H)},\]
where $\pi_h$ is the orthogonal projection of $p$ onto the tensor product $H_{h,1} \otimes H_{h,2}$ in $L^2(0,T;H)$.
\end{lemma}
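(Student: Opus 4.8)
The plan is to avoid working with $\pi_h$ directly and instead compare $p$ with a concrete quasi-interpolant in $H_{h,1}\otimes H_{h,2}$ built by applying the two one-variable projectors in succession, then split the resulting error into a purely temporal contribution controlled by \eqref{phi} and a purely spatial one controlled by \eqref{F}. First I would lift the two projectors to the Bochner space $L^2(0,T;H)$: define $\Pi_1 p$ by $(\Pi_1 p)(t)=\pi_{h,1}(p(t))$ for a.e.\ $t$ (this is Bochner measurable since $\pi_{h,1}$ is bounded linear), and $\Pi_2 p=\pi_{h,2}p$ by applying $\pi_{h,2}$ componentwise to the $H$-valued function $p$. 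A short computation shows that $\Pi_1$ and $\Pi_2$ are again orthogonal projections in $L^2(0,T;H)$, with ranges $L^2(0,T;H_{h,1})$ and $H_{h,2}\otimes H$ respectively; in particular both are norm-nonincreasing on $L^2(0,T;H)$.

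Next, observe that $\Pi_1\Pi_2 p\in H_{h,1}\otimes H_{h,2}$: writing $\Pi_2 p=\sum_j \psi_j\,a_j$ with $\{\psi_j\}$ a basis of $H_{h,2}$ and $a_j=\int_0^T\psi_j(s)p(s)\,ds\in H$, one has $\Pi_1\Pi_2 p=\sum_j \psi_j\,\pi_{h,1}(a_j)$, a linear combination of products of basis functions of $H_{h,1}$ and $H_{h,2}$. Since $\pi_h$ is the $L^2(0,T;H)$-orthogonal projection onto $H_{h,1}\otimes H_{h,2}$, it realizes the best approximation, so
\[
\|p-\pi_h p\|_{L^2(0,T;H)}\le\|p-\Pi_1\Pi_2 p\|_{L^2(0,T;H)}\le\|p-\Pi_1 p\|_{L^2(0,T;H)}+\|\Pi_1(p-\Pi_2 p)\|_{L^2(0,T;H)},
\]
and the last term is bounded by $\|p-\Pi_2 p\|_{L^2(0,T;H)}$ because $\Pi_1$ is norm-nonincreasing.

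It remains to estimate the two terms separately. Since $p\in H^1(0,T;H)$, hypothesis \eqref{phi} with $\phi=p$ gives directly $\|p-\Pi_2 p\|_{L^2(0,T;H)}\le c_2\,h\,\|\partial_t p\|_{L^2(0,T;H)}$. For the spatial term, $p\in L^2(0,T;F)$ means $p(t)\in F$ for a.e.\ $t$ with $\int_0^T\|p(t)\|_F^2\,dt<\infty$; applying \eqref{F} for a.e.\ fixed $t$, squaring and integrating in time yields
\[
\|p-\Pi_1 p\|_{L^2(0,T;H)}^2=\int_0^T\|p(t)-\pi_{h,1}(p(t))\|_H^2\,dt\le c_1^2 h^2\int_0^T\|p(t)\|_F^2\,dt=c_1^2 h^2\|p\|_{L^2(0,T;F)}^2.
\]
Adding the two estimates gives $\|p-\pi_h p\|_{L^2(0,T;H)}\le c_1 h\|p\|_{L^2(0,T;F)}+c_2 h\|\partial_t p\|_{L^2(0,T;H)}\le c\,h\,\|p\|_{L^2(0,T;F)\cap H^1(0,T;H)}$ with, say, $c=c_1+c_2$ (or $c=\sqrt{2}\max(c_1,c_2)$ according to the norm chosen on the intersection space), which is the claim. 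There is no serious difficulty here; the only points needing care are the bookkeeping on the lifted projectors $\Pi_1,\Pi_2$ — in particular that the pointwise-in-time application of $\pi_{h,1}$ is itself an orthogonal projection on $L^2(0,T;H)$, hence a contraction — and the measurability that legitimizes applying \eqref{F} for a.e.\ fixed $t$.
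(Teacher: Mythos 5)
Your proof is correct and follows the same overall strategy as the paper's: compare $p$ with a composed interpolant lying in $H_{h,1}\otimes H_{h,2}$, use that $\pi_h$ is the best approximation, and split the error into a spatial term handled by (\ref{F}) applied pointwise in $t$ and a temporal term handled by (\ref{phi}). The one genuine difference is the order of composition. The paper compares $p$ with $\pi_{h,2}(\pi_{h,1}\,p)$, so its temporal term is $\|\pi_{h,1}p-\pi_{h,2}(\pi_{h,1}p)\|_{L^2(0,T;H)}$, which it estimates by applying (\ref{phi}) to $\phi=\pi_{h,1}p$ and then invoking the commutation $\partial_t(\pi_{h,1}p)=\pi_{h,1}(\partial_t p)$ together with $\|\pi_{h,1}\|\le 1$; you compare with $\Pi_1\Pi_2\,p$, so your temporal term is $\|\Pi_1(p-\Pi_2 p)\|_{L^2(0,T;H)}$, estimated by applying (\ref{phi}) directly to $\phi=p$ (legitimate since $p\in H^1(0,T;H)$) and then using that the lifted $\Pi_1$ is an orthogonal projection, hence a contraction, on $L^2(0,T;H)$. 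The two variants cost essentially the same -- both ultimately rest on the boundedness of $\pi_{h,1}$ with constant $1$ -- but yours dispenses with the commutation identity, a small simplification; everything else (the pointwise-in-$t$ use of (\ref{F}), the verification that the composed interpolant belongs to the tensor-product space, the final constant) matches the paper's argument.
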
  
\begin{proof}
Since $\pi_{h,2}(\pi_{h,1}\,p) \in H_{h,1} \otimes H_{h,2}$,
we have
\[||p-\pi_h\,p||_{L^2(0,T;H)}=\inf_{p_h \in H_{h,1} \otimes H_{h,2}} ||p-p_h||_{L^2(0,T;H)} \leq ||p-\pi_{h,2}(\pi_{h,1}\,p)||_{L^2(0,T;H)}.\]
Hence
\[||p -\pi_{h}\, p||_{L^2(0,T;H)} \leq ||p -\pi_{h,1}\, p||_{L^2(0,T;H)} + ||\pi_{h,1}\,p -\pi_{h,2}(\pi_{h,1}\,p)||_{L^2(0,T;H)}\]
As for the first term, we have
\[ ||p -\pi_{h,1}\,p||^2_{L^2(0,T;H)}=\int_0^T ||p-\pi_{h,1}\,p||_H^2\,dt \leq  c^2_1\,h^2 \int_0^T ||p(.,t)||^2_{F}\,dt\]
by using (\ref{F}) for $f=p(.,t)$. We end up with
\[ ||p -\pi_{h,1}\,p||_{L^2(0,T;H)} \leq  c_1\,h||p||_{L^2(0,T;F)}.\]
As for the second term, we have
\[||\pi_{h,1}\,p -\pi_{h,2}(\pi_{h,1}\,p)||_{L^2(0,T;H)} \leq c_2 \,h \left\|\deri{(\pi_{h,1}\,p)}{t}\right\|_{L^2(0,T;H)},\]
with the help of (\ref{phi}) for $\phi=\pi_{h,1}\,p$.
By using the fact that $\partial(\pi_{h,1}\,p)/\partial t=\pi_{h,1}(\partial p/\partial t)$ and the boundedness of the operator $\pi_{h,1}$ from $H$ to itself with a bound $\leq 1$,
we obtain
\[||\pi_{h,1}\,p -\pi_{h,2}(\pi_{h,1}\,p)||_{L^2(0,T;H)} \leq c_2 \,h \left\|\deri{p}{t}\right\|_{L^2(0,T;H)} \leq c_2 \,h ||p||_{H^1(0,T;H)},\]
which completes the proof.
\end{proof}
\begin{proof}[Theorem \ref{th1d}]
Well-posedness of problem (\ref{qrh_lateral_initial_var2}) is based on the same arguments as in the proof of theorem \ref{wellposed_var}.
By denoting $X_{\eps}=(v_{\eps},\qq_{\eps}) \in V \times W$ and
$X_{\eps,h}=(v_{\eps,h},\qq_{\eps,h}) \in V_h \times W_h$,
and since the bilinear form $B$ is symmetric, 
the solution $X_{\eps,h}$ minimizes the functional
$B(X_\eps-Y_h,X_\eps-Y_h)$ over all $Y_h \in V_h \times W_h$.
With the help of (\ref{coerc}) and the fact that $\eps \leq 1$, we obtain a constant $c>0$ such that
\[||X_{\eps}-X_{\eps,h}||_{V \times W} \leq  \frac{c}{\sqrt{\eps}}\, \inf_{Y_h \in V_h \times W_h} ||X_{\eps}-Y_h||_{V \times W}.\]
We have now to estimate the interpolation errors $v_{\eps}-\pi_h v_\eps \in V$  and $\qq_{\eps}-\pi_h \qq_{\eps} \in W$,
where $\pi_h$ is the generic projection operator of an element which belongs to an infinite dimensional space onto the corresponding finite element space.
The first error is simple to obtain since the finite element defined as the tensor product of the $P^1$ element in $2D$ by the $P^1$ element in $1D$ coincides with the prismatic finite element in 3D (see for example \cite{ciarlet}).
The error estimate for such $3D$ finite element is well-known and we have for $v_\eps \in H^2(Q)$,
\[||v_{\eps}-\pi_h v_\eps|| \leq C\,h ||v_\eps||_{H^{2}(Q)}.\]
for some constant $C>0$.
Let us consider the second error.
For $\ff\in (H^1(\Omega))^2$ such that ${\rm div}\ff \in H^1(\Omega)$, we have the inequality (see for example \cite{brezzi_fortin,raviart_thomas})
\be ||\ff-\pi_h \ff||_{H_{\rm div,\Omega}} \leq C'_\Omega\,h (||\ff||_{(H^1(\Omega))^2}+||{\rm div} \ff||_{H^1(\Omega)}),\label{estim1}\ee
where $\pi_h \ff$ is the interpolate of $\ff$ on $W_{h,\Omega}$.
Given the definition of space $W$ we also need to estimate the trace of $(\ff-\pi_h \ff) \cdot \nu$ on $\Gamma$. 
If in addition we assume that $\ff \in (H^{2}(\Omega))^2$, then the trace of $\ff \cdot \nu$ on each segment $\Gamma_K$ of $\Gamma$ belongs to $H^1(\Gamma_K)$, and the interpolation error on each such segment
amounts, since $(\pi_h \ff) \cdot \nu$ is the mean value of $\ff \cdot \nu$ on each segment (see \cite{brezzi_fortin,raviart_thomas}), to an interpolation of a $H^1$ function on a 
segment by a constant.
We hence have, by denoting $\tau$ the curvilinear abscissa,
\[ ||(\ff-\pi_h \ff) \cdot \nu ||^2_{L^2(\Gamma)}= \sum_{\Gamma_K \subset \overline{\Gamma}} \int_{\Gamma_K }|(\ff-\pi_h \ff) \cdot \nu|^2\,d \tau \]
\[\leq  \sum_{\Gamma_K \subset \overline{\Gamma}}\,c^2\,h^2 \int_{\Gamma_K }\left|\deri{(\ff \cdot \nu)}{\tau}\right|^2\,d \tau =c^2\,h^2\sum_{\Gamma_K \subset \overline{\Gamma}}\int_{\Gamma_K }\left|\deri{\ff}{\tau} \cdot \nu\right|^2\,d \tau \leq c^2\,h^2 ||\ff||^2_{(H^1(\Gamma))^2}.\]
By using the continuity of the trace from $H^2(\Omega)$ to $H^1(\Gamma)$, we obtain
\be ||(\ff-\pi_h \ff) \cdot \nu ||_{L^2(\Gamma)} \leq C''_\Omega\,h ||\ff||_{(H^2(\Omega))^2} \label{estim2}.\ee
Gathering the estimates (\ref{estim1}) and (\ref{estim2}) we obtain that
\[ ||\ff-\pi_h \ff||_{H_{\rm div,\Omega,\Gamma}} \leq C_\Omega\, h ||\ff||_{(H^2(\Omega))^2}.\]
We also have, for $H=H_{{\rm div},\Omega,\Gamma}$
and $\phi \in H^1(0,T;H)$,  
\[||\phi-\pi_h \phi||_{L^2(0,T;H)} \leq C_T\,h\left\|\deri{\phi}{t}\right\|_{L^2(0,T;H)},\]
where $\pi_h \phi$ is the interpolate of $\phi$ on $W_{h,T}$.
By applying lemma \ref{tensor} with $H=H_{{\rm div},\Omega,\Gamma}$, $F=(H^2(\Omega))^2$, $H_{h,1}=W_{h,\Omega}$ and $H_{h,2}=W_{h,T}$, we obtain that there exists a constant $C$ such that
\[||\qq_{\eps}-\pi_h \qq_\eps||_{{\rm div},\Sigma} \leq C\,h ||\qq_\eps||_{H^1(0,T;H_{{\rm div},\Omega,\Gamma}) \cap L^2(0,T;(H^{2}(\Omega))^2)}.\] 
The continuous embeddings $(H^1(\Omega))^2 \subset H_{{\rm div},\Omega,\Gamma} \subset H_{{\rm div},\Omega}$ enable us to complete the proof.
\end{proof} 
\begin{remark}
The convergence result of theorem \ref{th1d} relies on some regularity assumptions on the quasi-reversibility solution $(v_\eps,\qq_{\eps})$.
Hence, an analysis of the regularity of such solution in a polygonal domain by using the technique of \cite{grisvard} would be interesting though challenging, since the two functions
$v_\eps$ and $\qq_\eps$ are coupled by the boundary conditions. Such analysis is postponed to some future contribution.
\end{remark}
\subsection{A few numerical experiments}
We now present some numerical illustrations of the ``exterior approach" algorithm in a domain $D$ delimited by the curve defined in polar coordinates by
\be r(\theta)=1+0.1\,\sin(3\theta), \quad \theta \in [0,2\pi].\label{exterieur}\ee
Two different obstacles $O$ are considered (an easy convex case and a more difficult non convex one):
the obstacle $O^1$ defined by 
\[r(\theta)=0.5+0.1\, \cos(\theta) -0.02\,\sin(2\theta), \quad \theta \in [0,2\pi]\]
and the obstacle $O^2$ defined as the union of the disk centered at $(-0.3,-0.3)$ of radius $0.2$ and the disk centered at $(0.4,0.3)$ of radius $0.15$.
The synthetic data of our inverse problem are obtained by solving the following forward problem: for a Dirichlet data $g_D$ on $\partial D \times (0,T)$, find $u$ in $\Omega \times (0,T)$ such that
 \begin{equation}
\label{forward}
\left\{
\begin{array}{cccc}
& \partial_t u -\Delta u  = 0&  \text{in} &\Omega \times (0,T)\\
& u=g_D  & \text{on} & \partial D \times (0,T)\\
& u=0 & \text{on} &\partial O \times (0,T)\\
& u=0 & \text{on} &\Omega \times \{0\}.
\end{array}
\right.
\end{equation}
Such problem is solved numerically by using a standard 2D finite element method in space and a finite difference scheme in time. It should be noted that in order to avoid an inverse crime, the mesh used to solve the inverse problem is different from the mesh used to solve the forward problem (\ref{forward}) that provides the artificial data.
Two kinds of Dirichlet data are used:
an easy case $g^1_D(r,\theta,t) = 4t(1-t)$ and a more difficult case
\[ g^2_D(r,\theta,t) = 4t(1-t)\,\cos(\theta - 4\pi\,t).\]
The data of our inverse problems are given by $(g_0,g_1)$ on $\Gamma$, where $g_0$ is the restriction of $g_D$ on $\Gamma$ and $g_1$ is the normal derivative $\partial_\nu u$ on $\Gamma$, where $u$ is the solution to problem (\ref{forward}).
Some pointwise Gaussian noise is added to the Dirichlet data $g_0$ such that the contaminated data $g_0^\delta$ satisfies $||g_0^\delta-g_0||_{L^2(\Gamma)}=\delta$.
Concerning $\Gamma$, for the obstacle $O^1$ two situations are analyzed: the case of complete data, that is $\Gamma=\partial D$, and the case of partial data, that is $\Gamma$ is the subpart of $\partial D$ defined by (\ref{exterieur}) for $\theta \in (0,\pi/2) \cup (\pi,3\pi/2)$.
Let us now give some details about the ``exterior approach" algorithm. The initial guess $O_0$ is the circle centered at $(0,0)$ and of radius $0.8$.
Both the quasi-reversibility problem (\ref{qr_lateral_initial_it}) in step 1 and the Poisson problem (\ref{poisson_n}) in step 2 are solved on the same fixed mesh based on a polygonal domain that approximates $D$, where $D$ is given by (\ref{exterieur}).  At each step $n$, the updated domain $O_n$ is approximated by a polygonal line defined on such mesh. 
In addition, while a simple $P1$ triangular finite element is used to solve the 2D problem (\ref{poisson_n}), the tensorized finite elements described in section \ref{sec_tensor} are used to solve the 3D problem (\ref{qr_lateral_initial_it}). If not specified, the final time $T$ is equal to $1$. The size of the mesh is such that the number of segments of the polygonal line that approximates the exterior boundary $\partial D$ is around 100, while the number of time intervals is around 70 for $T=1$.
The right-hand side $f$ in problem (\ref{poisson_n}) is chosen as a sufficiently large constant which can slightly differ from one case to another and will be given in each case. 
If not specified, the parameters in problem
(\ref{qr_lateral_initial_it}) are chosen as $\eps=0.01$ and $M=20$. 
For a study of the different parameters of the ``exterior approach", in particular the selection of $f$, $\eps$ and the stopping criterion, the reader will refer to previous articles, especially \cite{bourgeois_darde1} and \cite{becache_bourgeois_darde_franceschini}.
Before testing the ``exterior approach" algorithm, let us first test the quasi-reversibility method only, that is the step 1 of the algorithm, for a known and fixed obstacle $O$.
More precisely, we are interested in the discrepancy between the solution of problem (\ref{qr_lateral_initial_it}) and the exact solution in the domain $\Omega \times (0,T)$, for obstacle $O^1$ and complete data on $\partial D$
obtained from Dirichlet data $g^1_D$. 
This discrepancy is represented in figure \ref{QR} in the space domain $\Omega$ at fixed time $t=0.5T$ for three different amplitudes of noise, that is $\delta=0$ (no noise), $\delta=0.05$ and $\delta=0.1$ and at fixed time $t=T$ for $\delta=0.1$ only (the results for $\delta=0$ and $\delta=0.05$ are almost the same when $t=T$). We observe that the quality of the solution to problem (\ref{qr_lateral_initial_it}) strongly deteriorates from the exterior boundary to the interior boundary and from $t=0.5T$ to $t=T$, which is expected since we are here concerned with the ill-posed problem of the heat equation with exterior lateral Cauchy data and initial condition (\ref{lateral_initial}). 
\begin{figure}[!h]
\centering
\includegraphics[width=0.49\textwidth]{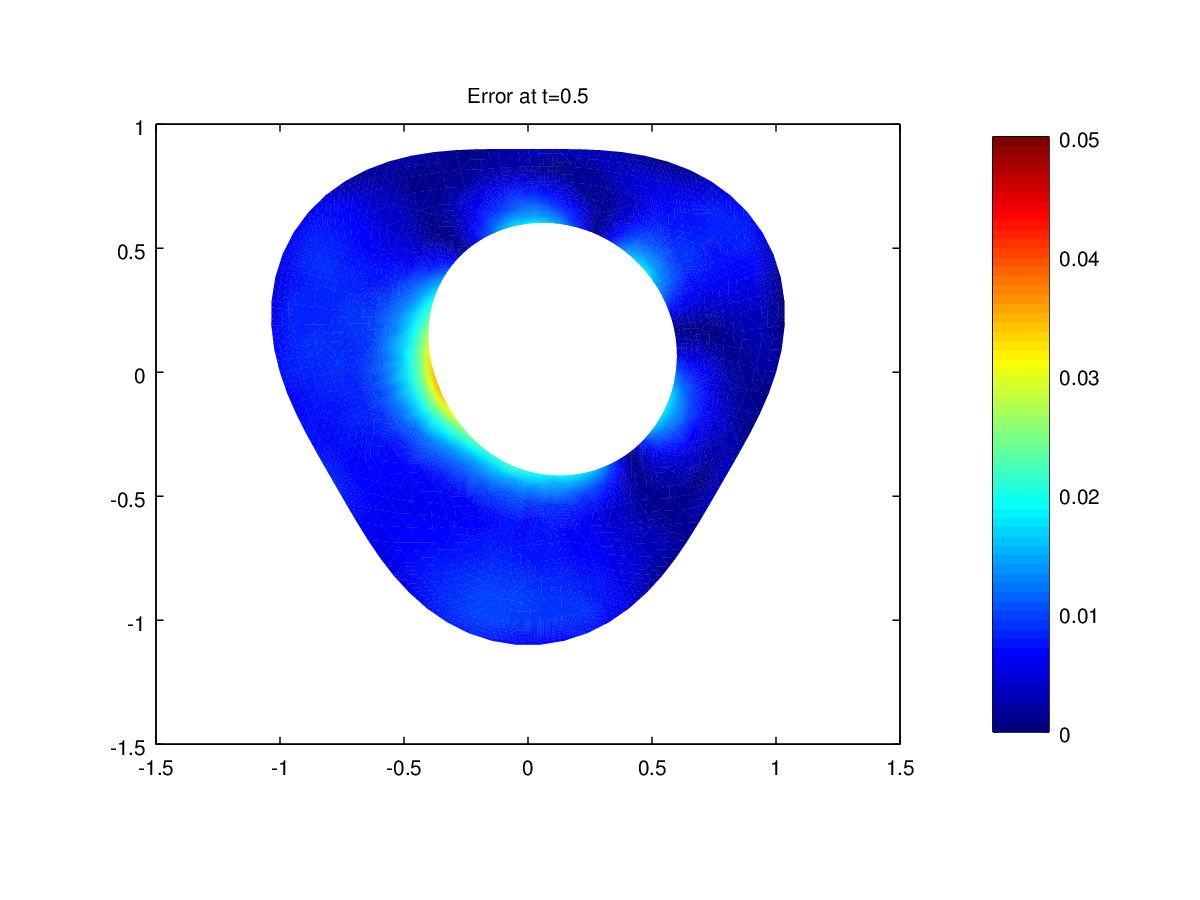}
\includegraphics[width=0.49\textwidth]{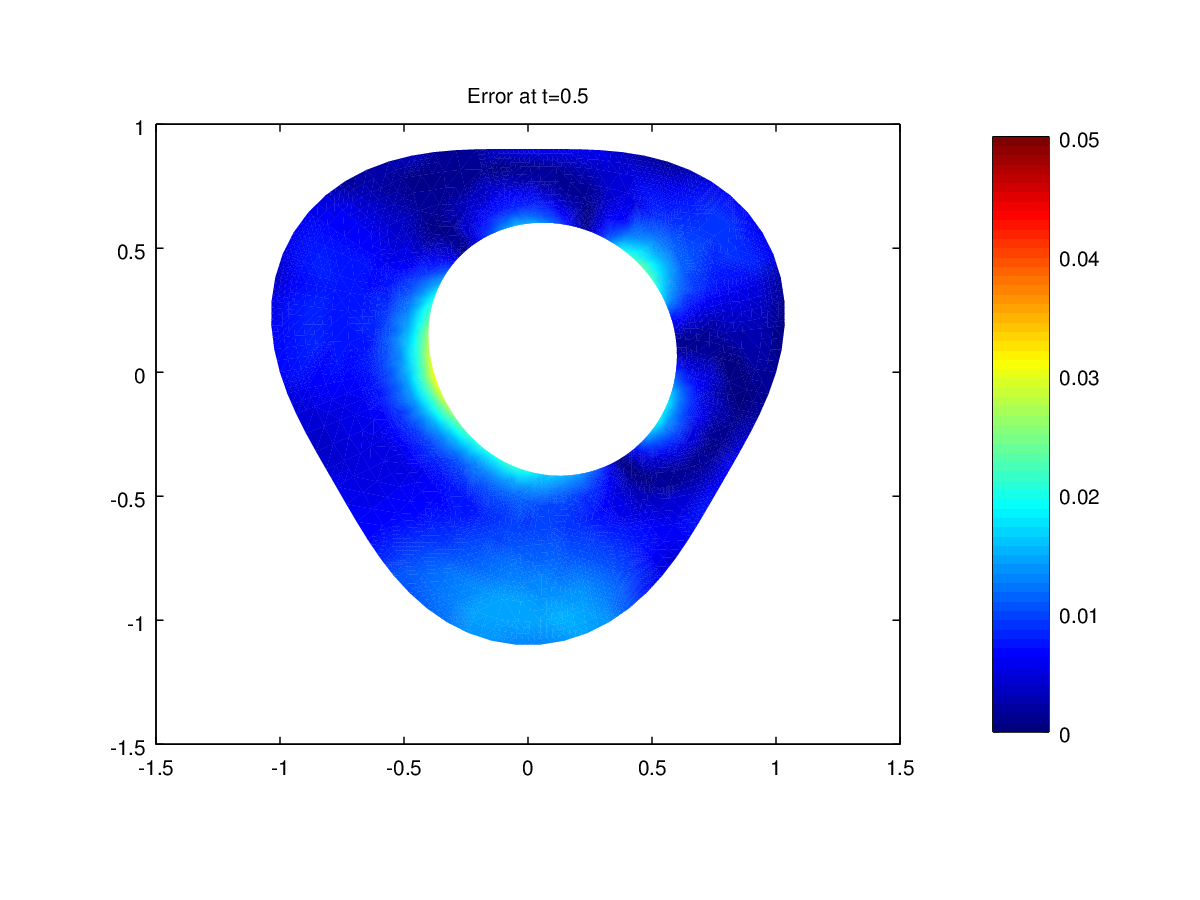} \\
\includegraphics[width=0.49\textwidth]{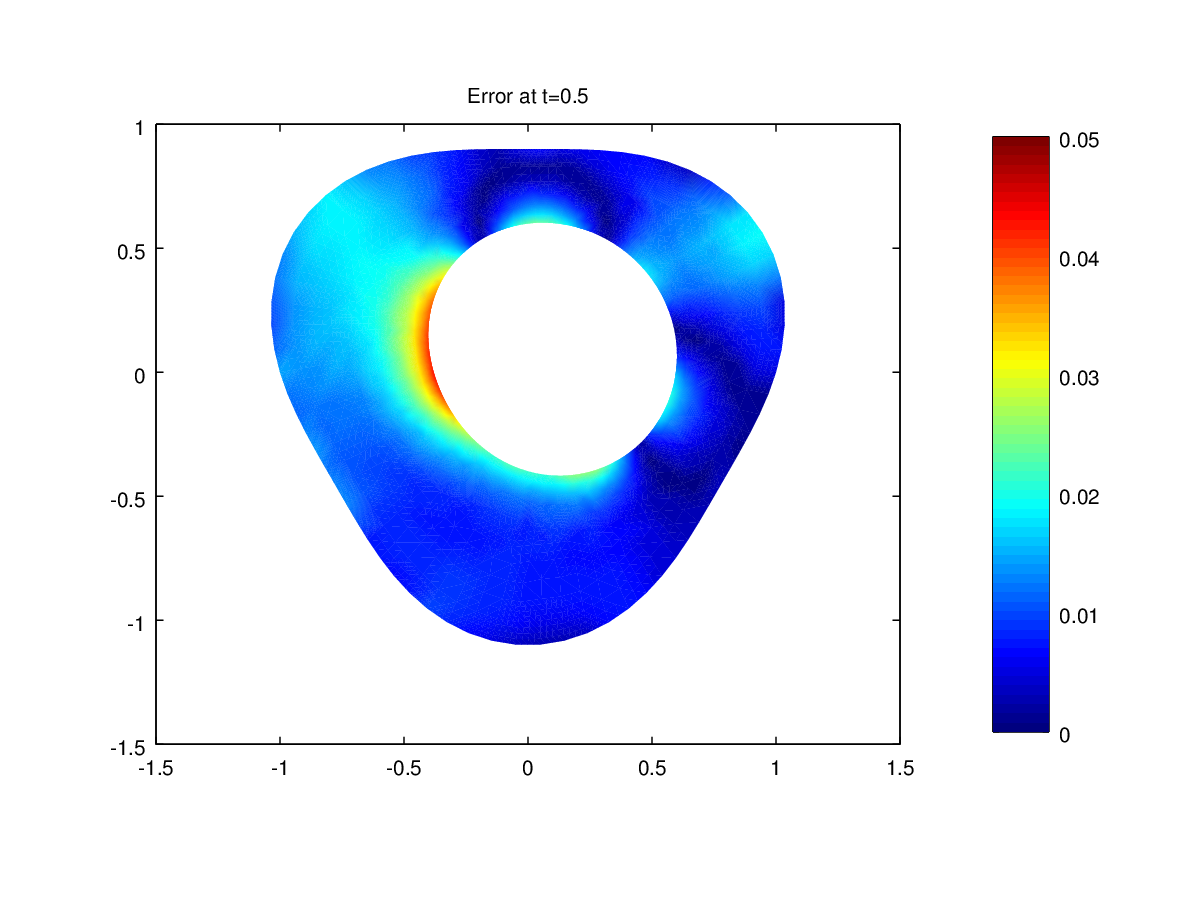}
\includegraphics[width=0.49\textwidth]{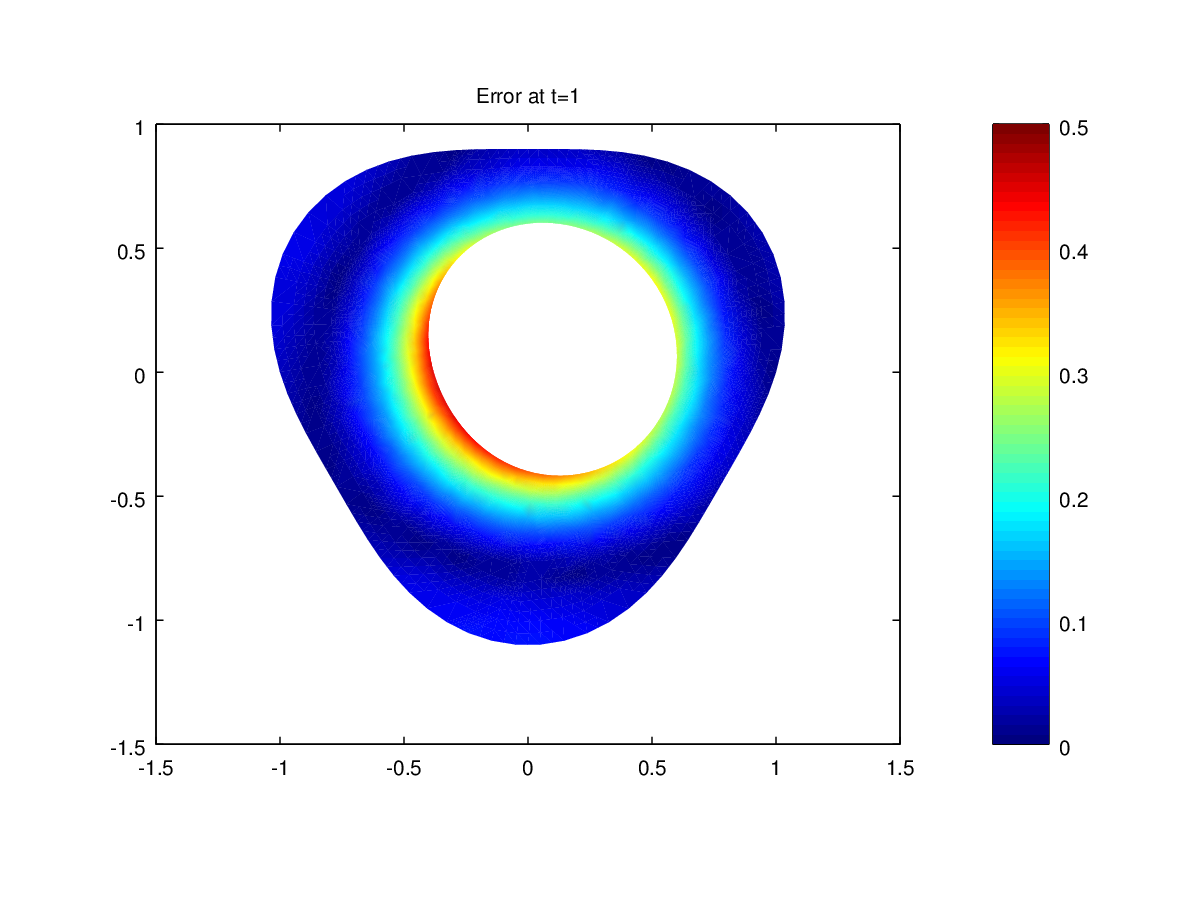}
\caption{Discrepancy between the reconstructed solution and the exact one for known obstacle $O^1$ and complete Cauchy data obtained from Dirichlet data $g^1_D$. Top left: exact data and $t=0.5T$. Top right: noisy data of amplitude $\delta=0.05$ and $t=0.5T$. Bottom left: noisy data of amplitude $\delta=0.1$ and $t=0.5T$. Bottom right: noisy data of amplitude $\delta=0.1$ and $t=T$.}
\label{QR}
\end{figure}
Now let us perform the ``exterior approach" algorithm.
Since the quality of the solution to problem (\ref{qr_lateral_initial_it}) seems unsatisfactory near $t=T$,
in step 2 of the algorithm we compute $V_n$ as $||u_n(x,\cdot)||_{L^2(0,T/2)}$ instead of $||u_n(x,\cdot)||_{L^2(0,T)}$ in order to improve the accuracy of the velocity of the level fronts. In the following numerical experiments, convergence of the sequence of obstacles $O_n$ is achieved for at least $n=10$ and at most $n=20$ iterations. 
In figure \ref{P1}, starting from the initial guess $O_0$ we have plotted the successive level fronts as well as the reconstructed obstacle $O^1$ compared to the exact one, in the case of complete Cauchy data based on the Dirichlet data $g^1_D$ (we have chosen $f=-20$).
For $T=1$, we test three different amplitudes of noise ($\delta=0$, $\delta=0.05$ and $\delta=0.1$) and for $T=0.5$ instead of $T=1$, we only consider the worst case $\delta=0.1$. We observe that the obstacle is well reconstructed, even in the presence of noisy data, which is a consequence of our relaxed formulation of quasi-reversibility which takes our noisy Cauchy data in a weak way.
\begin{figure}[!h]
\centering
\includegraphics[width=0.49\textwidth]{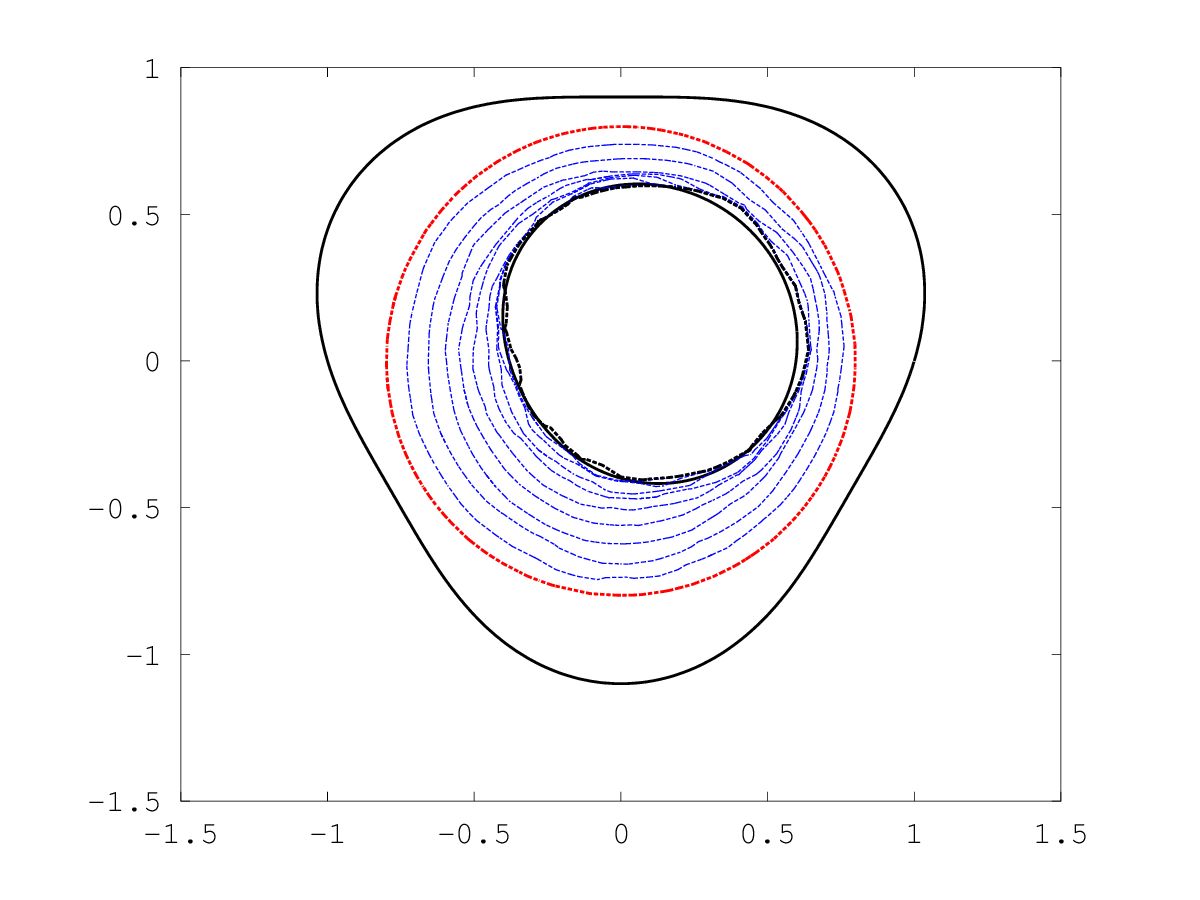}
\includegraphics[width=0.49\textwidth]{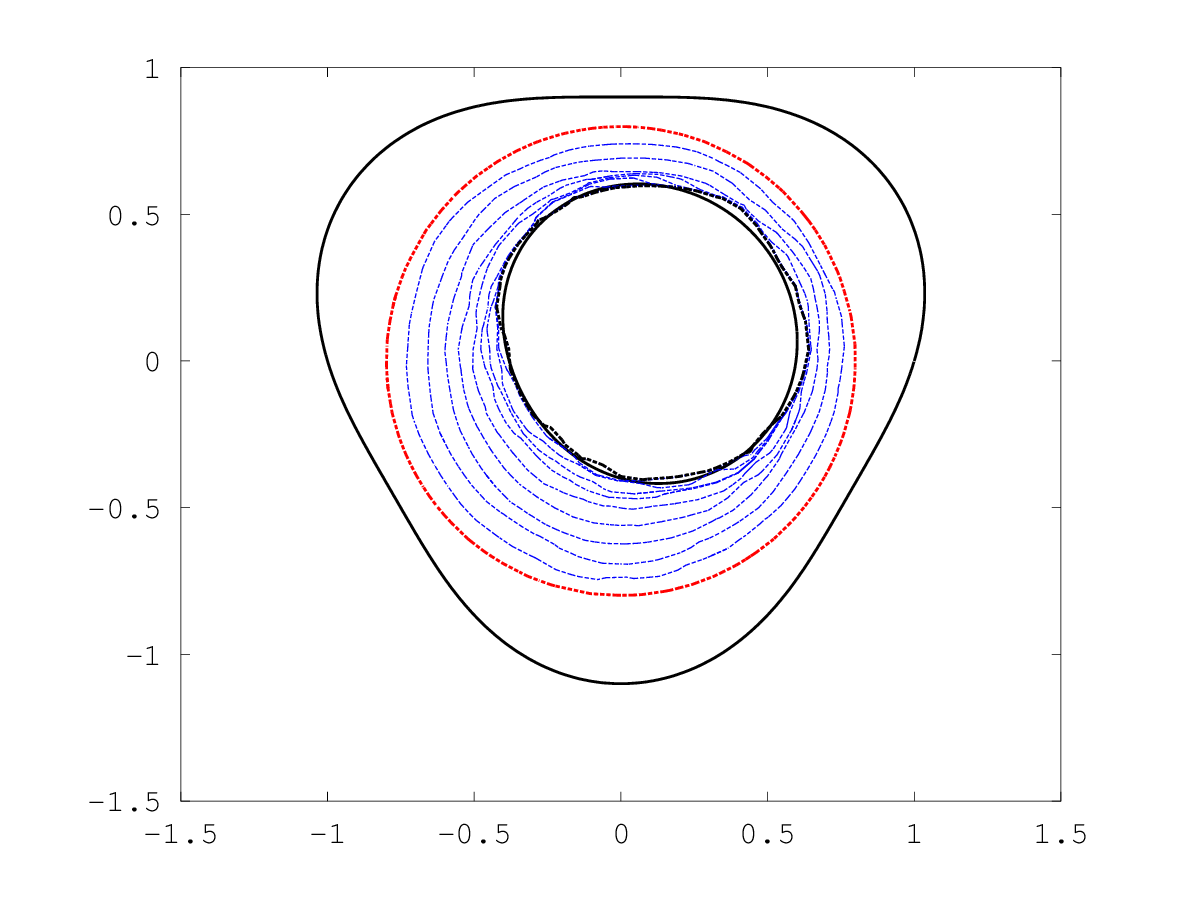} \\
\includegraphics[width=0.49\textwidth]{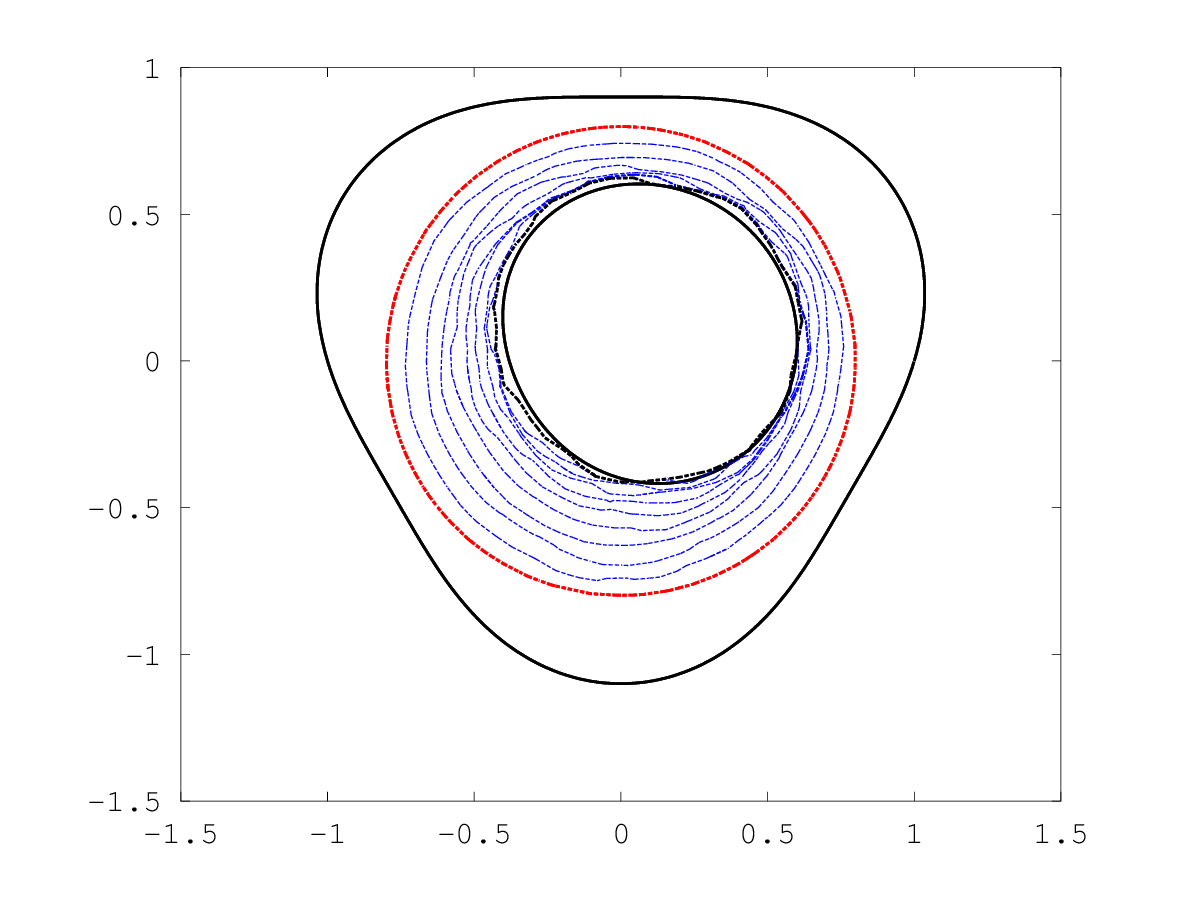}
\includegraphics[width=0.49\textwidth]{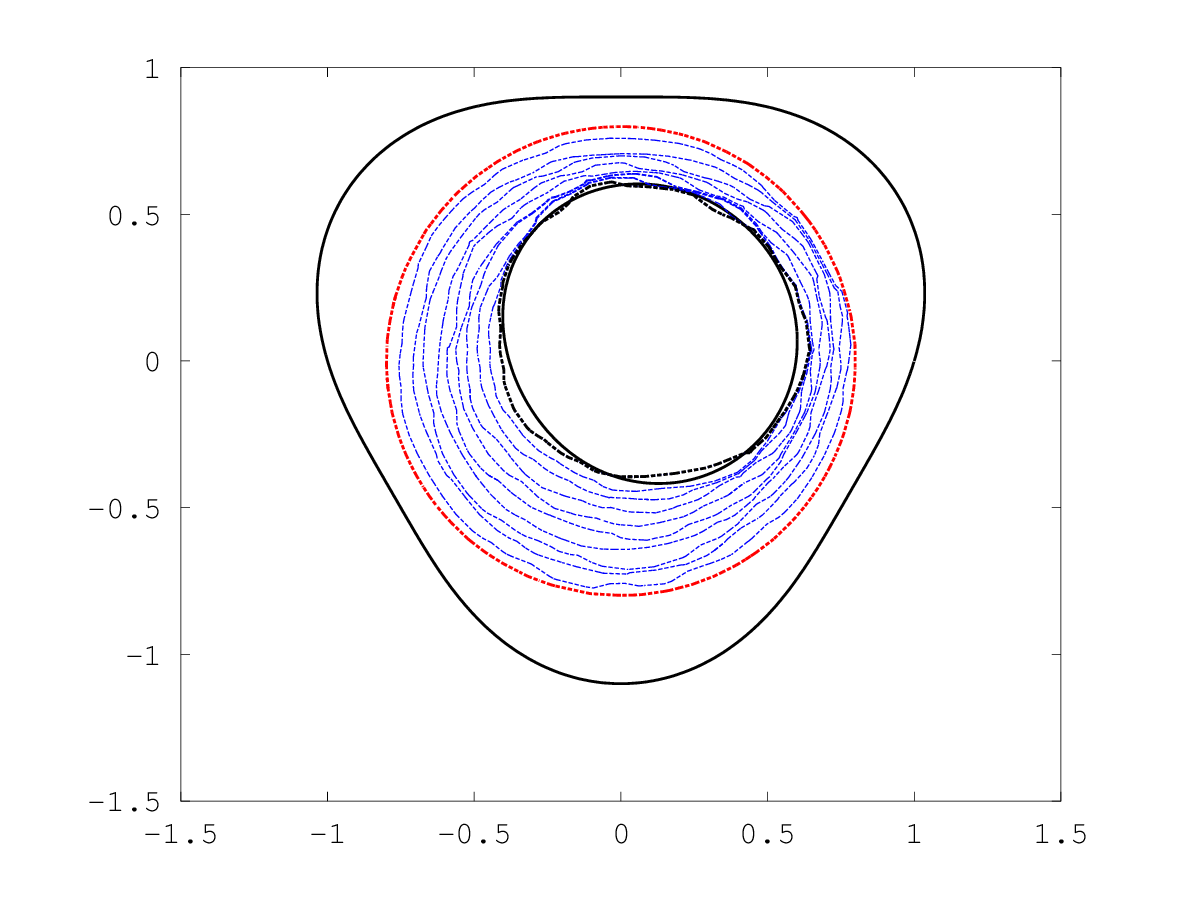}
\caption{Reconstructed obstacle $O^1$ with complete Cauchy data obtained from Dirichlet data $g^1_D$. Top left: exact data with $T=1$. Top right: noisy data of amplitude $\delta=0.05$ and $T=1$. Bottom left: noisy data of amplitude $\delta=0.1$ and $T=1$.
Bottom right: noisy data of amplitude $\delta=0.1$ and $T=0.5$.}
\label{P1}
\end{figure}
Figure \ref{P2} represents the same results as in figure \ref{P1} but in the case of complete Cauchy data based on the Dirichlet data $g^2_D$ (we have chosen $f=-15$).
The same conclusions as before can be drawn in this second case. Besides, as we observed in \cite{becache_bourgeois_darde_franceschini} for the 1D case, increasing the duration of measurements improves the quality of the identification. 
\begin{figure}[!h]
\centering
\includegraphics[width=0.49\textwidth]{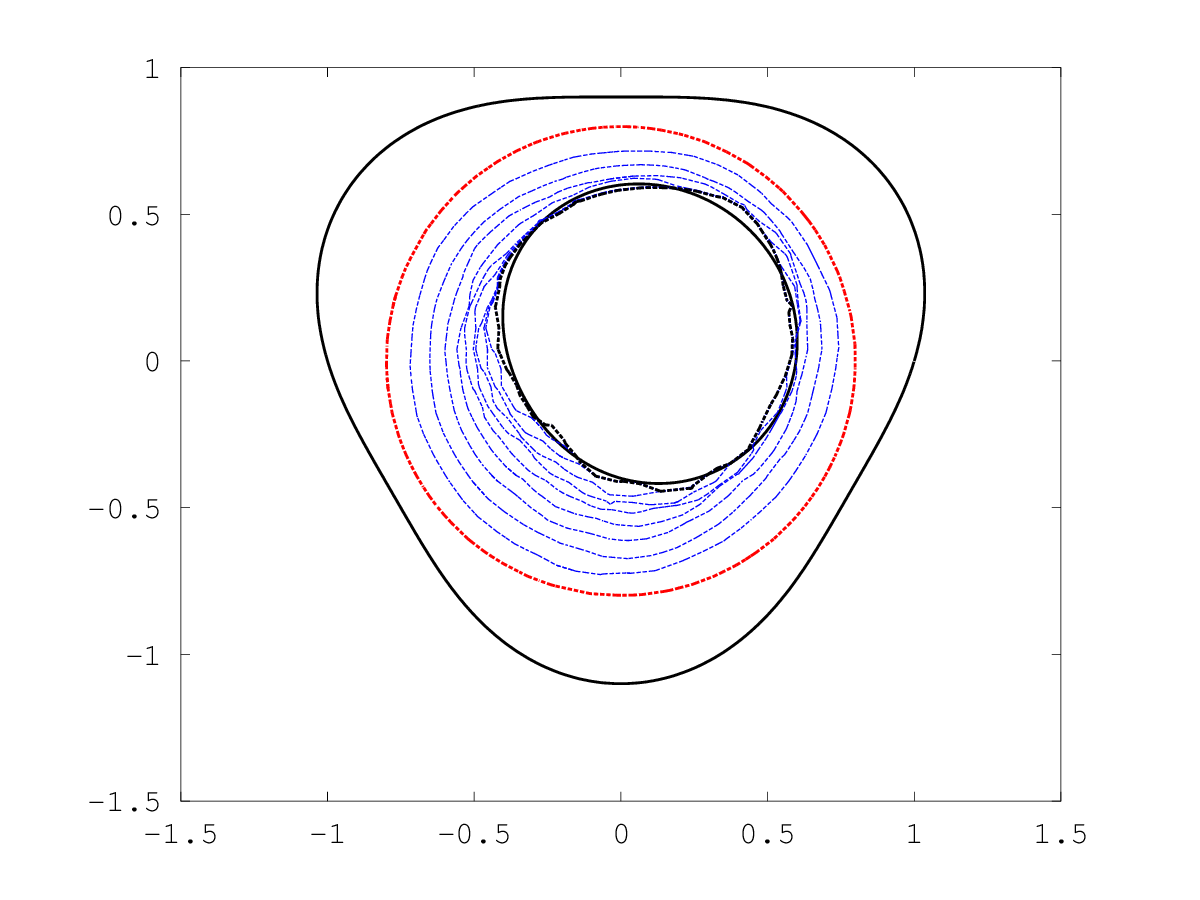}
\includegraphics[width=0.49\textwidth]{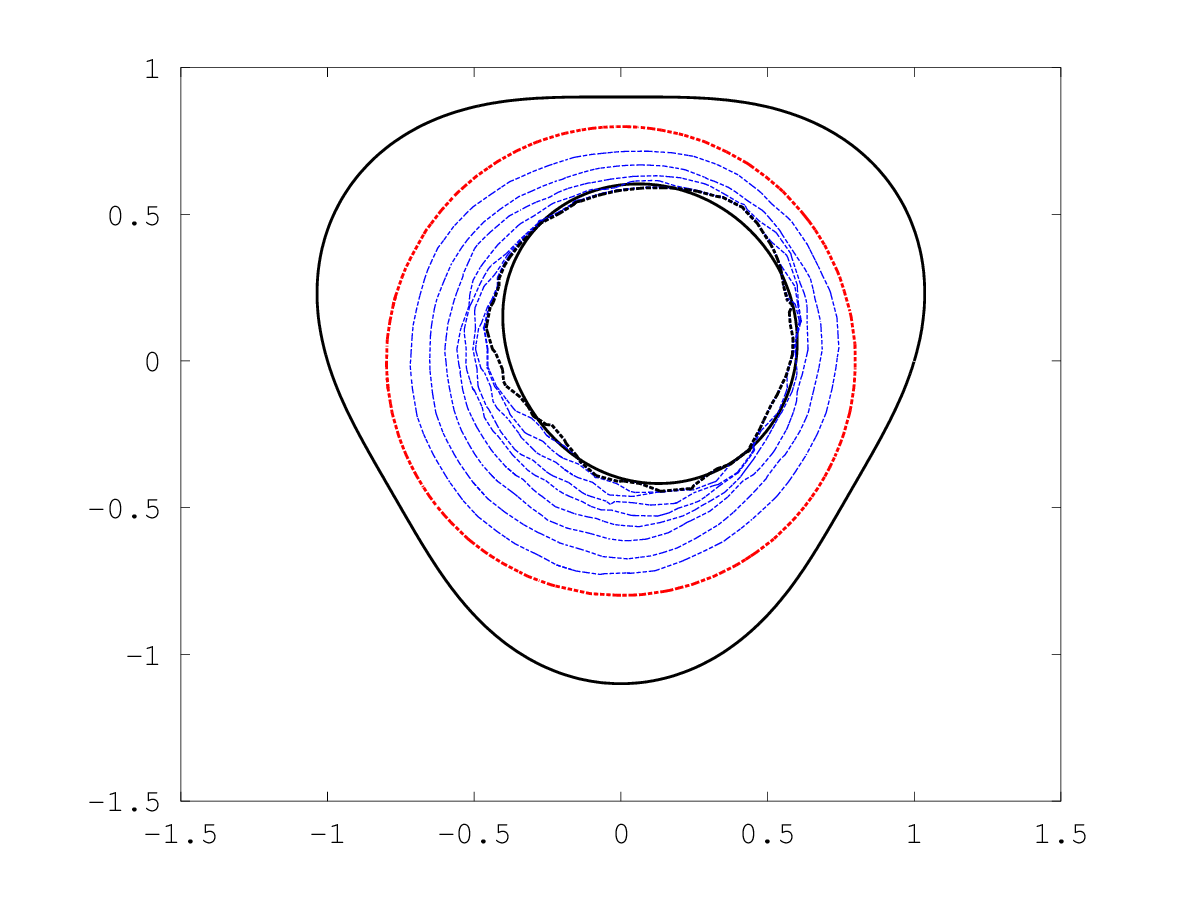} \\
\includegraphics[width=0.49\textwidth]{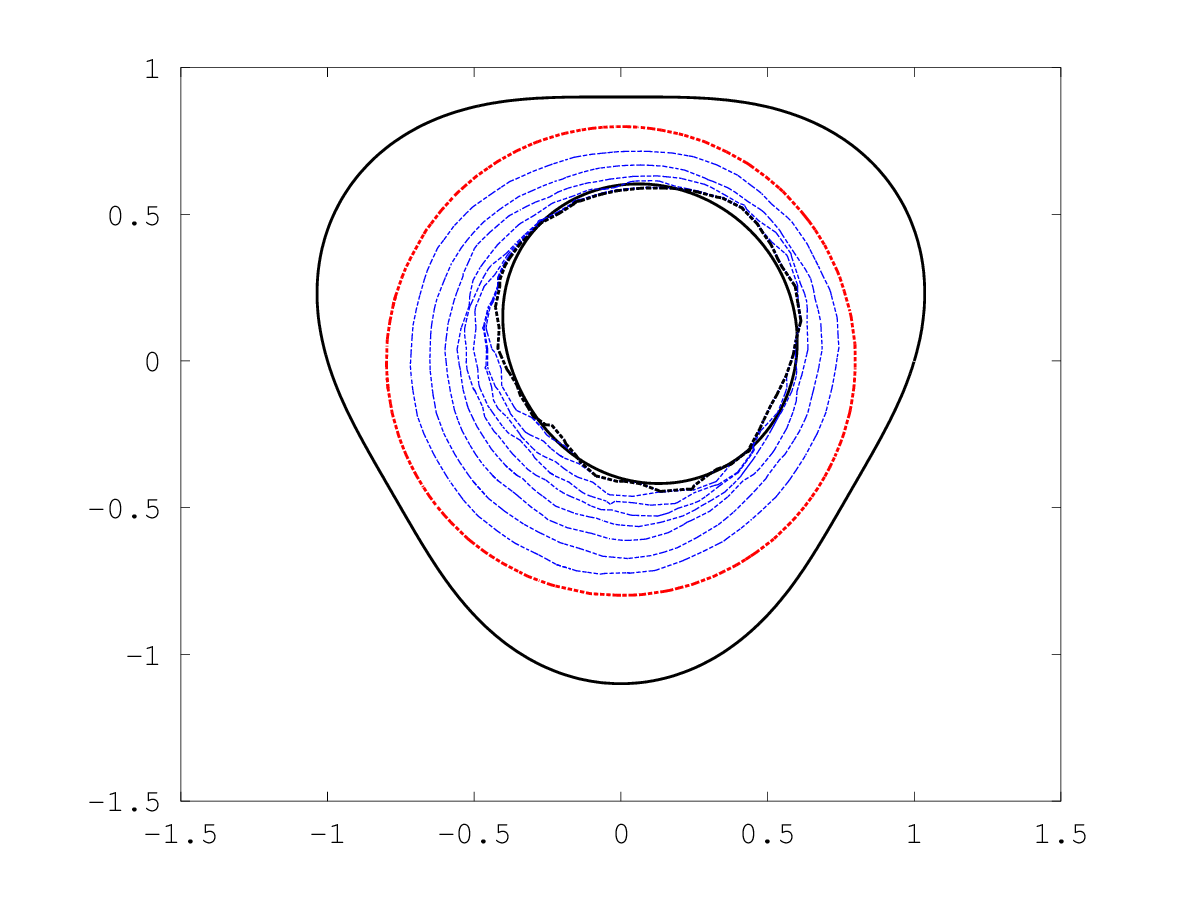}
\includegraphics[width=0.49\textwidth]{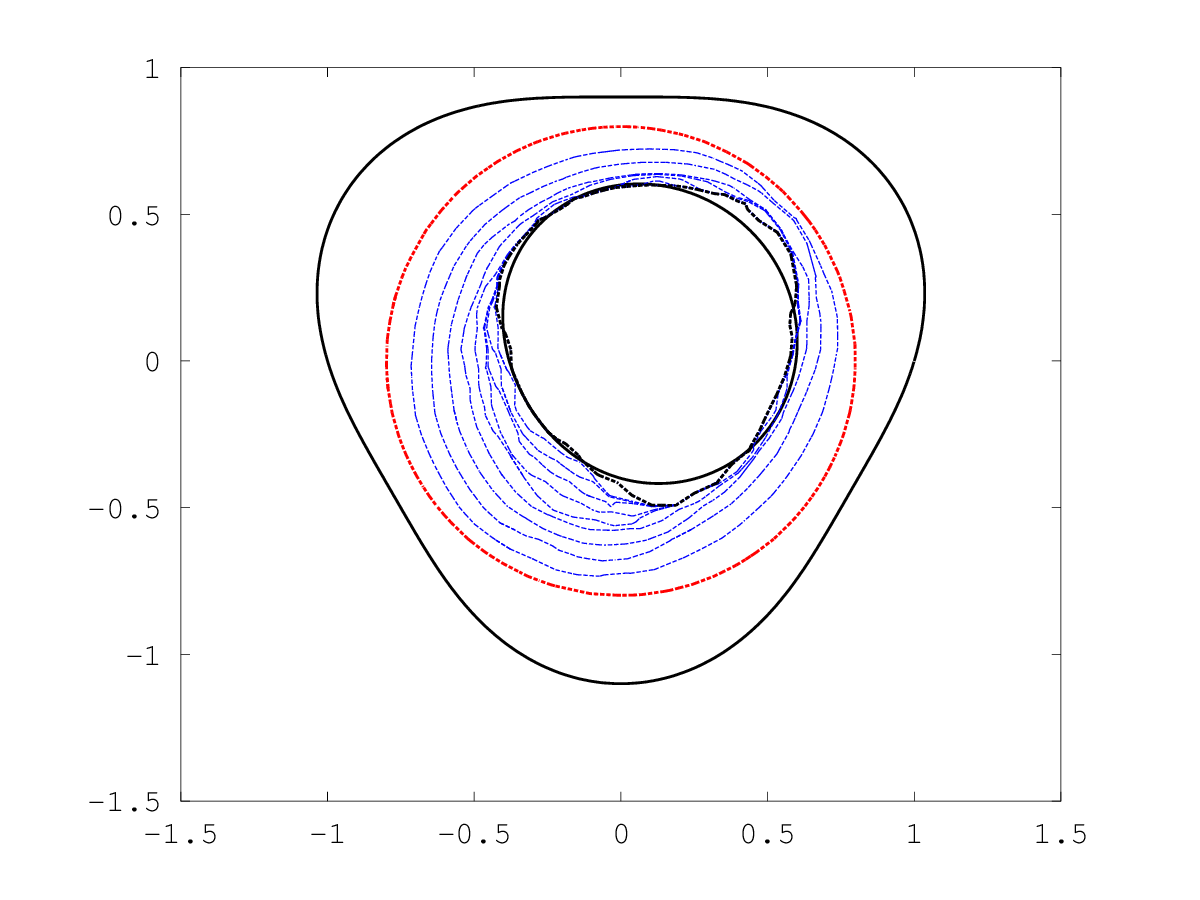}
\caption{Reconstructed obstacle $O^1$ with complete Cauchy data obtained from Dirichlet data $g^2_D$. Top left: exact data with $T=1$. Top right: noisy data of amplitude $\delta=0.05$ and $T=1$. Bottom left: noisy data of amplitude $\delta=0.1$ and $T=1$.
Bottom right: noisy data of amplitude $\delta=0.1$ and $T=0.5$.}
\label{P2}
\end{figure}
In figure \ref{P3} we reconstruct the obstacle $O^1$ with uncontaminated partial Cauchy data (instead of complete data) based on the Dirichlet data $g_D^1$ ($f=-23$) or $g_D^2$ ($f=-17$).
The obtained results have to be compared to the top left figures of \ref{P1} and $\ref{P2}$, respectively. It can be seen that the quality of the reconstructions strongly decreases, particularly for the most difficult case of data: we recall that no boundary data at all is prescribed on half of the boundary of $D$.  
\begin{figure}[!h]
\centering
\includegraphics[width=0.49\textwidth]{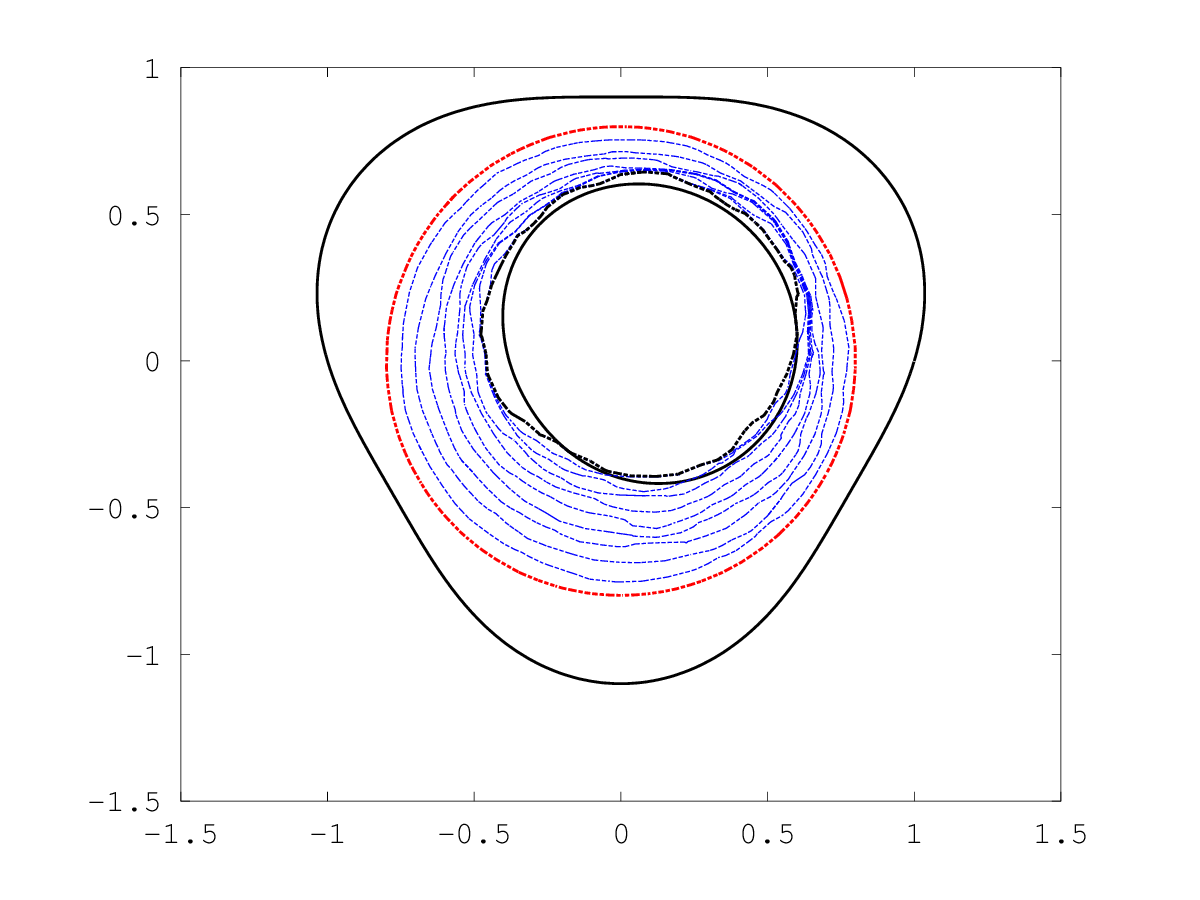}
\includegraphics[width=0.49\textwidth]{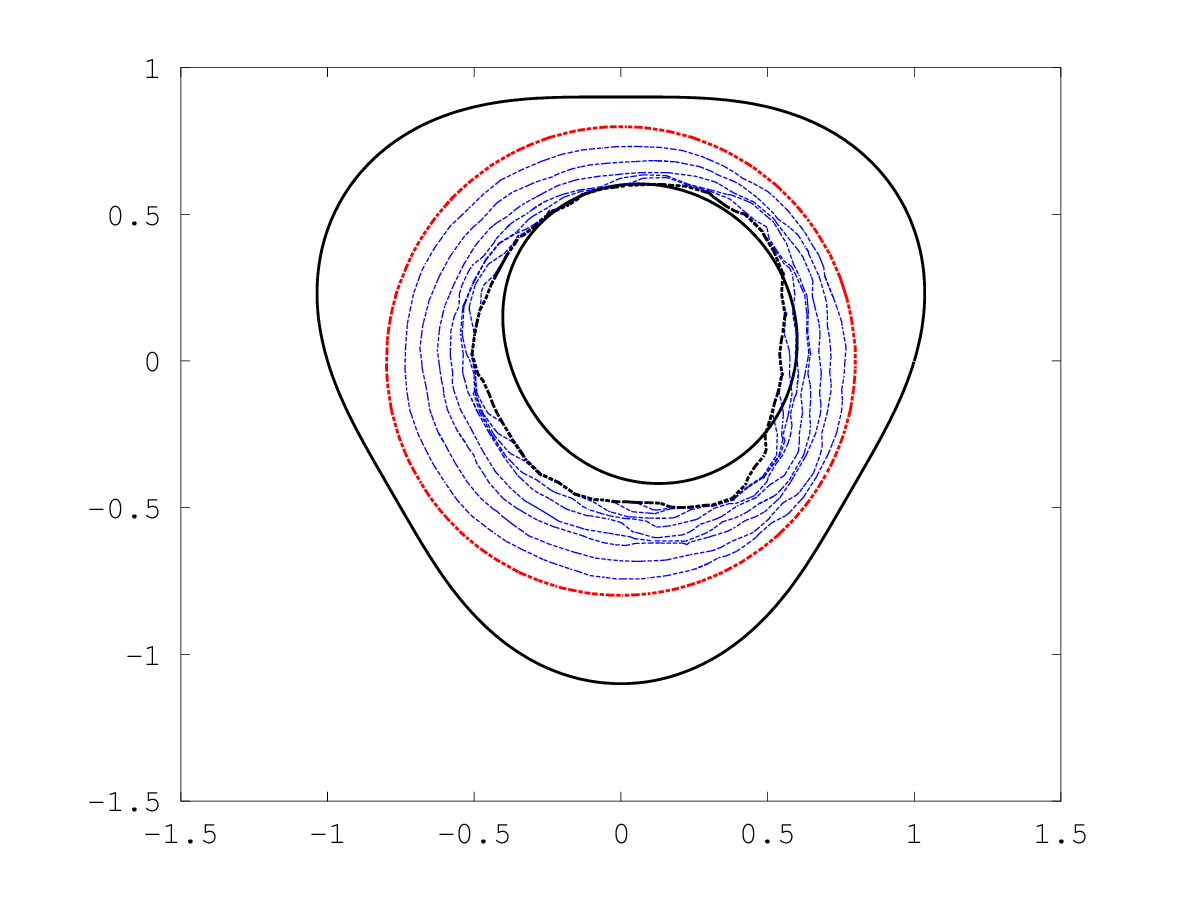}
\caption{Reconstructed obstacle $O^1$ with partial Cauchy data obtained from Dirichlet data $g_D^1$ (left) and $g_D^2$ (right).}
\label{P3}
\end{figure}
Lastly, in picture \ref{P4} we present the result of the identification of obstacle $O^2$ with complete Cauchy data based on the Dirichlet data $g_D^1$, either without noise or with noise of amplitude $\delta=0.1$ ($f=-0.14$ and $\eps=0.1$), and with the complete data based on the Dirichlet data $g_D^2$ with noise of 
amplitude $\delta=0.1$ (with the same parameters $f$ and $\eps$).
\begin{figure}[!h]
\centering
\includegraphics[width=0.49\textwidth]{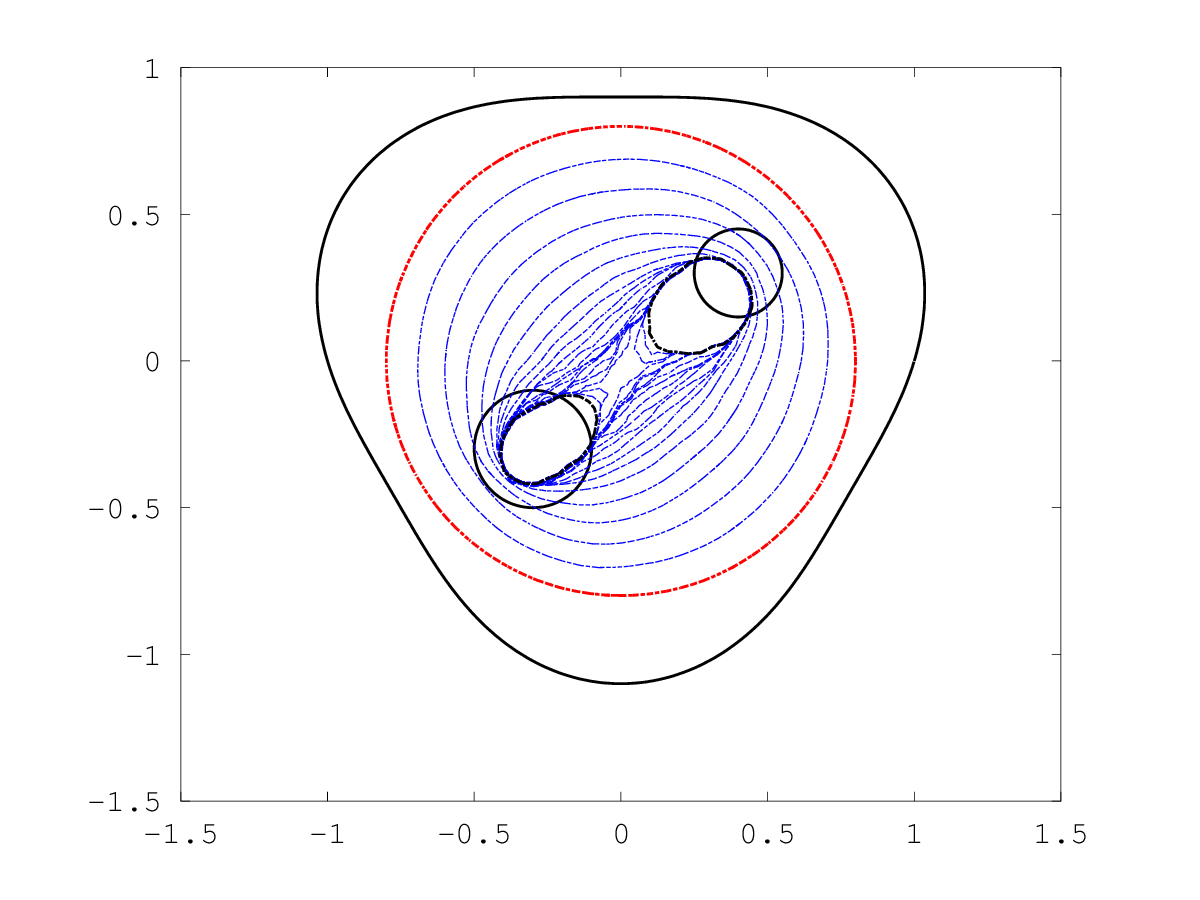}
\includegraphics[width=0.49\textwidth]{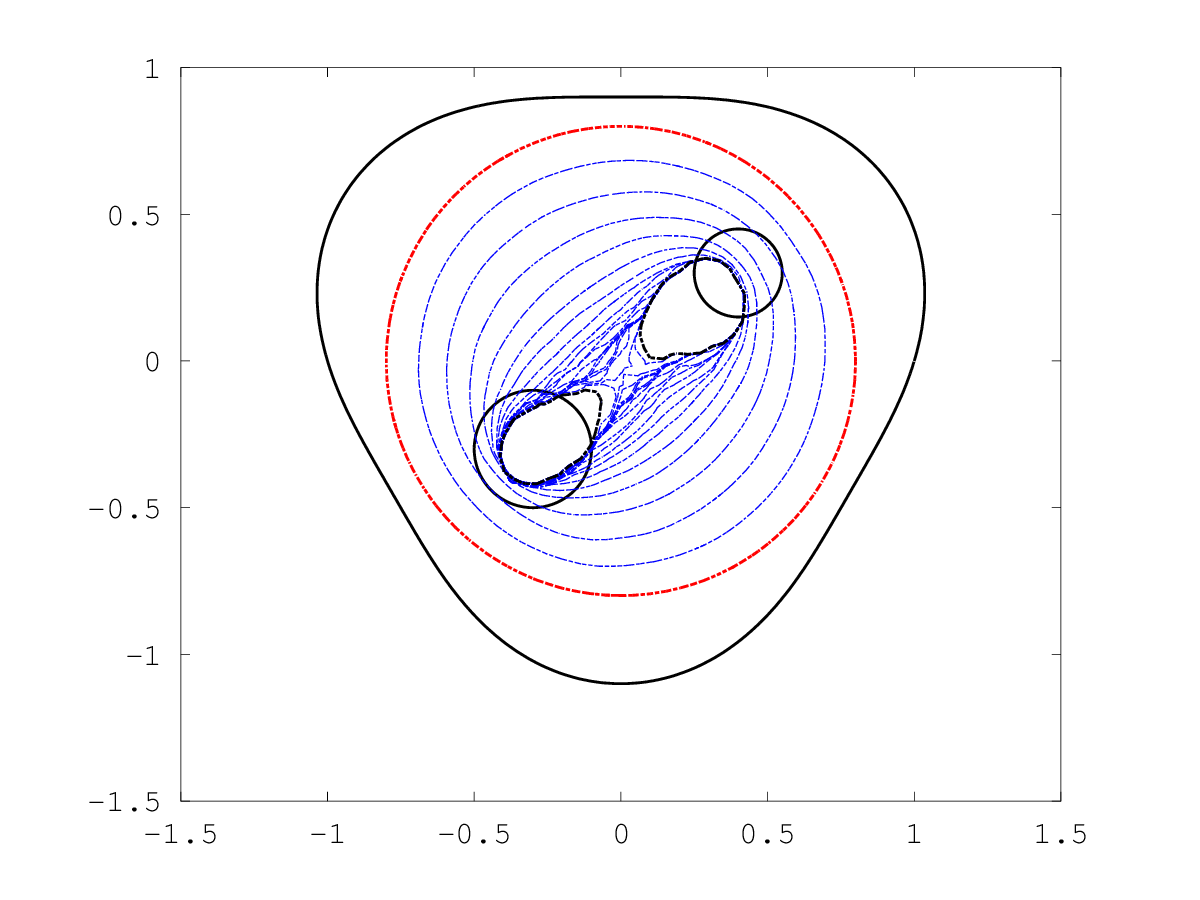}\\
\includegraphics[width=0.49\textwidth]{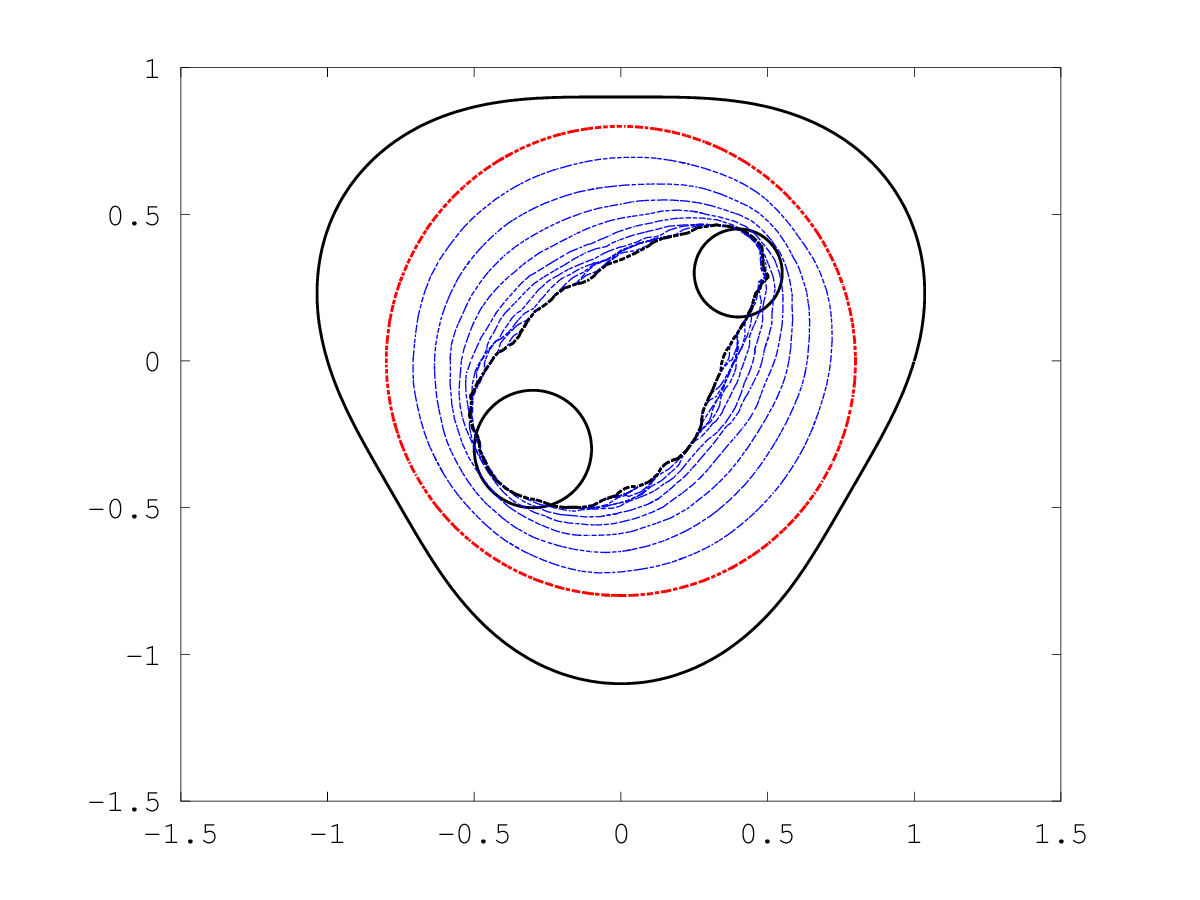}
\caption{Reconstructed obstacle $O^2$ with complete Cauchy data. Top: data are obtained from Dirichlet data $g_D^1$ without noise (left) and with noisy data of amplitude $\delta=0.1$ (right). Bottom: data are obtained from Dirichlet data $g_D^2$ with noisy data of amplitude $\delta=0.1$.}
\label{P4}
\end{figure}
\section{Conclusion and perspectives}
We have shown in this paper that our ``exterior approach" is applicable to the inverse obstacle problem for the heat equation with lateral Cauchy data and initial condition.
A specificity of our method is that those lateral Cauchy data may be known only on a subpart of the boundary while no data at all are known on the complementary part
(as in figure \ref{P3}). In addition, our ``relaxed" formulation of quasi-reversibility, which consists in taking into account our noisy boundary conditions in a weak way, 
seems quite robust with respect to the amplitude of the noise. However, if we compare our results for the heat equation and those obtained in \cite{bourgeois_darde1} for the Laplace equation, it seems that the quality of the identification is slightly worse in the first case than in the second one (see figure \ref{P4}). Maybe the ill-posedness of the inverse obstacle problem is intrinsically more severe for the heat equation than for the Laplace equation. Our aim is now to try the ``exterior approach" to solve the inverse obstacle problem for the wave equation in 2D, expecting better numerical results than for the heat equation.
\bibliography{ext_app_heat_eq}

\begin{thebibliography}{10}

\bibitem{banks_kojima_winfree}
{\sc H.~T. Banks, F.~Kojima, and W.~P. Winfree}, {\em Boundary estimation
  problems arising in thermal tomography}, Inverse Problems, 6 (1990),
  pp.~897--921.

\bibitem{becache_bourgeois_darde_franceschini}
{\sc E.~B\'ecache, L.~Bourgeois, L.~Franceschini, and J.~Dard\'e}, {\em
  Application of mixed formulations of quasi-reversibility to solve ill-posed
  problems for heat and wave equations: The 1d case}, Inverse Problems and
  Imaging, 9 (2015), pp.~971--1002.

\bibitem{bourgeois}
{\sc L.~Bourgeois}, {\em A mixed formulation of quasi-reversibility to solve
  the {C}auchy problem for {L}aplace's equation}, Inverse Problems, 21 (2005),
  pp.~1087--1104.

\bibitem{bourgeois_darde2}
{\sc L.~Bourgeois and J.~Dard{\'e}}, {\em A duality-based method of
  quasi-reversibility to solve the {C}auchy problem in the presence of noisy
  data}, Inverse Problems, 26 (2010), pp.~095016, 21.

\bibitem{bourgeois_darde1}
\leavevmode\vrule height 2pt depth -1.6pt width 23pt, {\em A
  quasi-reversibility approach to solve the inverse obstacle problem}, Inverse
  Probl. Imaging, 4 (2010), pp.~351--377.

\bibitem{bourgeois_darde3}
\leavevmode\vrule height 2pt depth -1.6pt width 23pt, {\em The ``exterior
  approach'' to solve the inverse obstacle problem for the {S}tokes system},
  Inverse Probl. Imaging, 8 (2014), pp.~23--51.

\bibitem{brezis}
{\sc H.~Brezis}, {\em Analyse fonctionnelle : th\'eorie et applications},
  Editions Dunod, 1999.

\bibitem{brezzi_fortin}
{\sc F.~Brezzi and M.~Fortin}, {\em Mixed and hybrid finite element methods},
  vol.~15 of Springer Series in Computational Mathematics, Springer-Verlag, New
  York, 1991.

\bibitem{bryan_caudill}
{\sc K.~Bryan and L.~F. Caudill, Jr.}, {\em An inverse problem in thermal
  imaging}, SIAM J. Appl. Math., 56 (1996), pp.~715--735.

\bibitem{burman}
{\sc E.~Burman}, {\em Stabilised finite element methods for ill-posed problems
  with conditional stability}, arXiv:1512.02837[math.NA],  (2015).

\bibitem{chapko_kress_yoon}
{\sc R.~Chapko, R.~Kress, and J.-R. Yoon}, {\em On the numerical solution of an
  inverse boundary value problem for the heat equation}, Inverse Problems, 14
  (1998), pp.~853--867.

\bibitem{ciarlet}
{\sc P.~G. Ciarlet}, {\em The finite element method for elliptic problems},
  North-Holland Publishing Co., Amsterdam-New York-Oxford, 1978.
\newblock Studies in Mathematics and its Applications, Vol. 4.

\bibitem{cindea_munch}
{\sc N.~C{\^{\i}}ndea and A.~M{\"u}nch}, {\em Inverse problems for linear
  hyperbolic equations using mixed formulations}, Inverse Problems, 31 (2015),
  pp.~075001, 38.

\bibitem{darde}
{\sc J.~Dard{\'e}}, {\em Iterated quasi-reversibility method applied to
  elliptic and parabolic data completion problems}, Inverse Probl. Imaging, 10
  (2016), pp.~379--407.

\bibitem{darde_finlandais}
{\sc J.~Dard{\'e}, A.~Hannukainen, and N.~Hyv{\"o}nen}, {\em An
  {$H_{div}$}-based mixed quasi-reversibility method for solving elliptic
  {C}auchy problems}, SIAM J. Numer. Anal., 51 (2013), pp.~2123--2148.

\bibitem{evans}
{\sc L.~C. Evans}, {\em Partial differential equations}, vol.~19 of Graduate
  Studies in Mathematics, American Mathematical Society, Providence, RI,
  second~ed., 2010.

\bibitem{grisvard}
{\sc P.~Grisvard}, {\em Singularities in boundary value problems}, vol.~22 of
  Recherches en Math\'ematiques Appliqu\'ees [Research in Applied Mathematics],
  Masson, Paris; Springer-Verlag, Berlin, 1992.

\bibitem{harbrecht_tausch}
{\sc H.~Harbrecht and J.~Tausch}, {\em On the numerical solution of a shape
  optimization problem for the heat equation}, SIAM J. Sci. Comput., 35 (2013),
  pp.~A104--A121.

\bibitem{henrot_pierre}
{\sc A.~Henrot and M.~Pierre}, {\em Variation et optimisation de formes},
  vol.~48 of Math\'ematiques \& Applications (Berlin) [Mathematics \&
  Applications], Springer, Berlin, 2005.
\newblock Une analyse g{\'e}om{\'e}trique. [A geometric analysis].

\bibitem{ikehata_kawashita}
{\sc M.~Ikehata and M.~Kawashita}, {\em The enclosure method for the heat
  equation}, Inverse Problems, 25 (2009), pp.~075005, 10.

\bibitem{klibanov}
{\sc M.~V. Klibanov}, {\em Carleman estimates for the regularization of
  ill-posed {C}auchy problems}, Appl. Numer. Math., 94 (2015), pp.~46--74.

\bibitem{lattes_lions}
{\sc R.~Latt{\`e}s and J.-L. Lions}, {\em M\'ethode de quasi-r\'eversibilit\'e
  et applications}, Travaux et Recherches Math\'ematiques, No. 15, Dunod,
  Paris, 1967.

\bibitem{raviart_thomas}
{\sc P.-A. Raviart and J.~M. Thomas}, {\em A mixed finite element method for
  2nd order elliptic problems}, in Mathematical aspects of finite element
  methods ({P}roc. {C}onf., {C}onsiglio {N}az. delle {R}icerche ({C}.{N}.{R}.),
  {R}ome, 1975), Springer, Berlin, 1977, pp.~292--315. Lecture Notes in Math.,
  Vol. 606.

\end{thebibliography}
\bibliographystyle{siam} 
\end{document}